\numberwithin{equation}{section}
\newtheorem{thm}{Theorem}[section]
\newtheorem*{thm*}{Theorem}
\newtheorem{prop}[thm]{Proposition}
\newtheorem{lemma}[thm]{Lemma}
\theoremstyle{remark}
\newtheorem{remark}[thm]{Remark}
\newcommand{\Span}{\mathrm{Span}}
\newcommand{\tu}{\tilde{u}}
\newcommand{\bF}{\mathbf{F}}
\newcommand{\bc}{\mathbf{C}}
\newcommand{\bb}{\mathbf{B}}
\DeclareMathOperator{\adj}{adj}
\newcommand{\C}{\mathbb{C}}
\newcommand{\R}{\mathbb{R}}
\newcommand{\T}{\mathbb{T}}
\newcommand{\Z}{\mathbb{Z}}
\newcommand{\ep}{\varepsilon}
\newcommand{\con}{\equiv}
\newcommand{\bstack}[2]{\substack{{#1}\\{#2}}}
\newcommand{\maps}{\rightarrow}
\newcommand{\al}{\alpha}
\newcommand{\ga}{\gamma}
\newcommand{\del}{\delta}
\newcommand{\Del}{\Delta}
\newcommand{\sig}{\sigma}
\newcommand{\lam}{\lambda}
\newcommand{\Pcal}{\mathcal{P}}
\newcommand{\Mcal}{\mathcal{M}}
\newcommand{\Qcal}{\mathcal{Q}}
\newcommand{\Tcal}{\mathcal{T}}
\newcommand{\Bbf}{\mathbf{B}}
 \newcommand{\beq}{\begin{equation}}
\newcommand{\eeq}{\end{equation}}
\newcommand{\w}{\omega}
\newcommand{\g}{\gamma}
\newcommand{\z}{\zeta}
\renewcommand{\a}{\alpha}
\begin{document}

\title[On Polynomial Carleson operators along quadratic hypersurfaces]{On Polynomial Carleson operators along quadratic hypersurfaces}

\author[Anderson, Maldague, Pierce and Yung]{Theresa C. Anderson, Dominique Maldague, Lillian B. Pierce, and Po-Lam Yung}
\address{Carnegie Mellon University
Wean Hall, Hammerschlag Dr., Pittsburgh, PA 15213 USA}
\email{tanders2@andrew.cmu.edu}
\address{Department of Mathematics, Massachusetts Institute of Technology, Cambridge, MA 02142-4307 USA}
\email{dmal@mit.edu}
\address{Department of Mathematics, Duke University, 120 Science Drive, Durham NC 27708 USA \& Hausdorff Center for Mathematics, Bonn, Germany}
\email{pierce@math.duke.edu}
\address{Mathematical Sciences Institute, Australian National University, Canberra, ACT 2601 \& Department of Mathematics, The Chinese University of Hong Kong, Shatin, Hong Kong}
\email{polam.yung@anu.edu.au, \, plyung@math.cuhk.edu.hk}


%
\begin{abstract}
We prove that a maximally modulated singular oscillatory integral operator along a hypersurface defined by $(y,Q(y))\subseteq \R^{n+1}$, for an arbitrary non-degenerate quadratic form $Q$, admits an \emph{a priori} bound on $L^p$    for all $1<p<\infty$, for each $n \geq 2$. This operator takes the form of a polynomial Carleson operator of Radon-type, in which the maximally modulated phases   lie in the real span of $\{p_2,\ldots,p_d\}$ for any set of fixed real-valued polynomials $p_j$ such that $p_j$ is homogeneous of degree $j$, and $p_2$ is not a multiple of $Q(y)$. The general method developed in this work applies to quadratic forms of arbitrary signature, while previous work considered only the special positive definite case $Q(y)=|y|^2$.
\end{abstract}

\maketitle

\begin{center}
\emph{Dedicated to Lennart Carleson, on the occasion of his 96th birthday}
\end{center}

\section{Introduction}
In this work, we study maximally modulated operators of Radon-type. Precisely, let integers $n \geq 1$ and $d \geq 2$ be fixed.  For each fixed real-valued polynomial $P$ on $\R^n$, define an operator, initially acting on Schwartz functions, by
\beq\label{RP_Q_dfn}
 R_P f(x) = \int_{\R^n} f(x-\ga(y)) e^{i P(y)} K(y) dy,
 \eeq
where $\ga(y) = (y,Q(y)) \subset \R^{n+1}$ is a hypersurface   defined by a fixed non-degenerate quadratic form $Q : \R^n \maps \R$  in $n$ variables, and $K$ is a Calder\'on-Zygmund kernel (see (\ref{CZ_dfn})). (Recall that a quadratic form $Q$ on $\R^n$ is said to be non-degenerate if the associated $n\times n$ symmetric matrix, also denoted $Q$, such that $Q(y) = y^t Q y$, has the property that $\det Q \neq 0$.) We study the   corresponding maximally modulated operator, defined by 
\beq\label{Carlhyper}
f(x) \mapsto \sup_{P \in \mathcal{P}} |R_P f(x)|,
\eeq
in which $\Pcal$ is a chosen set of polynomials.
It is reasonable to expect that the operator (\ref{Carlhyper}) satisfies an \emph{a priori} bound on $L^p(\R^n)$ for all $1<p<\infty$, for any class $\Pcal$ of polynomials of bounded degree. We prove this for arbitrary quadratic hypersurfaces $\ga(y) = (y,Q(y))$ for all $n \geq 2$, when $\Pcal$ is a class of polynomials defined as the real span of a set of nonlinear homogeneous polynomials, whose quadratic component is not $Q$.

\begin{thm}\label{thm_main_R}
Fix integers $n \geq 2$ and  $d \geq 2$. Let $\ga: \R^n \mapsto \R^{n+1}$ be   defined by 
\[ \{ (y,Q(y)) : y \in \R^n\}\]
for a non-degenerate quadratic form $Q \in \R[X_1,\ldots, X_n]$. Let $p_2(y),\ldots, p_d(y)$ be a fixed set of real-valued polynomials on $\R^n$ such that each $p_j(y)$ is homogeneous of degree $j$ and $p_2(y)  \neq C Q(y)$ for any $C \neq 0$. Let $\Qcal_d$ denote the class of real-valued polynomials 
\beq\label{Q_class}
\Qcal_d = \Span_\R\{p_2,p_3,\ldots,p_d\}.
\eeq
For each $P \in \Qcal_d$, define the Carleson operator $R_P$ as in (\ref{RP_Q_dfn}). For each $1<p<\infty$ the following \emph{a priori} inequality holds for all Schwartz class functions $f \in \mathcal{S}(\R^{n+1})$: 
\beq\label{R_sup_ineq}
 \| \sup_{P \in \Qcal_d} |R_P f| \|_{L^p(\R^{n+1})} \leq A \|f\|_{L^p(\R^{n+1})},
 \eeq
with a constant $A$ that may depend on $n,d,p, Q$ and the fixed polynomials $p_2, \ldots, p_d$.
\end{thm}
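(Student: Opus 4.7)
The plan is to combine the established framework for polynomial Carleson operators of Radon type -- dyadic scale decomposition, linearization of the supremum, and Fourier-side multiplier analysis -- with a careful geometric decomposition of the parameter space $\Qcal_d$ that exploits the non-alignment hypothesis $p_2 \neq CQ$ to produce uniform oscillatory decay. First I would dyadically decompose $K = \sum_{k \in \Z} K_k$ with $K_k$ supported on $|y| \sim 2^k$. The rescaling $y \mapsto 2^{-k}y$ preserves $\Qcal_d$ because each $p_j$ is homogeneous of degree $j$ (so the rescaling merely relabels the coefficients), reducing the matter to estimating the supremum over $P \in \Qcal_d$ of model operators $T^P$ with kernel supported on the unit annulus. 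After linearizing by a measurable selection $x \mapsto P_x \in \Qcal_d$ and taking the Fourier transform on $\R^{n+1}$, $T^{P_x}$ becomes multiplication by
\[
m_{P_x}(\xi, \eta) = \int_{\R^n} e^{i[P_x(y) - \xi \cdot y - \eta Q(y)]} K(y) \psi(y)\, dy, \qquad (\xi, \eta) \in \R^n \times \R,
\]
for $\psi$ a smooth annular cutoff.

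The crux is uniform-in-$P_x$ decay of $m_{P_x}(\xi, \eta)$. Writing $P_x = c_2 p_2 + \sum_{j \geq 3} c_j p_j$, the quadratic part of the phase is $c_2 p_2 - \eta Q$, an element of the two-dimensional pencil $\{c_2 p_2 - \eta Q : (c_2,\eta) \in \R^2\}$. The hypothesis $p_2 \neq CQ$ guarantees this pencil is genuinely two-dimensional, and on the parameter circle $\{(c_2, \eta) \in S^1\}$ the form is degenerate only at finitely many directions, corresponding to the real generalized eigenvalues of the pair $(p_2, Q)$. Away from these bad directions, stationary phase in $y$ (applicable regardless of the signature of $Q$) yields $|m_{P_x}(\xi, \eta)| \lesssim (1 + |c_2| + |\eta|)^{-n/2}$; combined with integration by parts in $y$ in the regime $|\xi| \gg |c_2| + |\eta|$, this produces the decay $|m_{P_x}(\xi, \eta)| \lesssim (1 + |\xi| + |\eta|)^{-\kappa}$ uniformly in $P_x$, sufficient for the $L^2$ estimate by Plancherel. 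Near the bad directions the quadratic part of the phase is degenerate along a proper subspace $V \subset \R^n$, and decay in the $V$-directions must instead be extracted from the higher-order monomials $p_3,\ldots,p_d$ via van der Corput and sublevel-set estimates on the annulus, after a further decomposition of the $(c_2,\ldots,c_d)$-parameter space according to whether the higher-order coefficients are small or large relative to $(c_2,\eta)$.

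Reassembling the dyadic pieces, the high-frequency contributions sum via Cotlar-type almost-orthogonality across scales $k$, while the low-frequency portion combines with the cancellation of $K$ to recover the maximal truncation of the singular Radon transform along $\gamma$, for which $L^p$ bounds are already available. Interpolation with trivial size estimates and a vector-valued maximal function argument then yield the full range $1 < p < \infty$. The principal obstacle -- and the reason earlier work was restricted to $Q(y) = |y|^2$ -- is the uniform, $P_x$-independent treatment of the bad pencil directions: for the paraboloid these form a single rotationally symmetric locus that can be handled by spherical decomposition, whereas for an arbitrary non-degenerate $Q$ (in particular indefinite) the bad directions are controlled by the full generalized eigenstructure of $(p_2, Q)$, the degenerate subspaces $V$ can vary in dimension with $(c_2, \eta)$, and keeping the bounds uniform through the $TT^*$ iteration forces a parameter-space decomposition of $\Qcal_d$ tuned to this eigenstructure together with quantitative control on the restrictions of each $p_j$ to every degenerate subspace.
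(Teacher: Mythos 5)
There is a genuine gap at the heart of your argument: after linearizing the supremum by a measurable selection $x \mapsto P_x$, the operator $T^{P_x}$ is \emph{not} a Fourier multiplier operator, because the symbol depends on the output variable $x$. You can of course write $T^{P_x}f(x) = \int \hat f(\xi,\eta)\, m_{P_x}(\xi,\eta)\, e^{ix\cdot(\xi,\eta)}\,d\xi\,d\eta$, but Plancherel does not convert pointwise decay of $m_{P_x}(\xi,\eta)$ (even decay uniform in $P_x$) into an $L^2$ bound for the linearized operator; uniform symbol bounds only control each \emph{fixed-$P$} operator $T^P$, which is the trivial part of the problem. If uniform symbol decay sufficed, the classical Carleson theorem would be immediate. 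This loss of translation invariance is precisely why the subject is built on the $TT^*$ method: one must compute the kernel of $TT^*$, in which the two stopping-time values $\nu = \lam(x,t)$ and $\mu = \lam(x-u,t-\theta)$ appear through the phase $P_\nu(u+z)-P_\mu(z)$, and extract decay from that oscillatory integral while keeping all exceptional sets independent of $\mu$ (the variable of integration). Your proposal has no mechanism for handling the simultaneous presence of two independent parameter values, which is the actual difficulty. Relatedly, ``Cotlar-type almost-orthogonality across scales'' does not apply to a supremum over $k$; the paper instead controls $\sup_k$ by a square function built from Littlewood--Paley projections in the last variable, with off-diagonal decay $2^{-\ep|j-k|}$ between the frequency index $j$ and the scale index $k$.

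Your geometric intuition about the pencil $\{c_2 p_2 - \eta Q\}$ and its degenerate directions does identify the right source of difficulty, but it enters the correct argument at a different level. After $TT^*$ and a change of variables $(u,z)\mapsto(u,\tau,\sigma)$ adapted to $Q(u+z)-Q(z)$ (which frees $n-1$ integration variables and is where $n\ge 2$ and the quadratic structure are used), the kernel becomes an oscillatory integral in $\sigma$ whose coefficients are polynomials in $u/|u|$ and $\tau$ with entries $\nu_j-\mu_j$ and $\nu_j$. The degeneracy you describe reappears there as the question of whether certain coefficient polynomials ($B_{j,\gamma}$, $D_{j,\gamma}$ in the paper's notation) vanish identically, i.e.\ whether $p_2$ ``looks like $Q$'' coordinate by coordinate; the hypothesis $p_2\neq CQ$ guarantees this cannot happen in all coordinates, and a Cramer's-rule/rank argument eliminates the $\mu$-dependence. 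To repair your proof you would need to abandon the Plancherel step entirely and rebuild the decay estimate at the level of the $TT^*$ kernel, with exceptional sets in $(u,\tau)$ that are small and independent of $\mu$.
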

 A simple change of variables (see \S \ref{sec_prelim}) shows that it suffices to prove Theorem \ref{thm_main_R} in the case that 
\[
Q(y) = \theta_1 y_1^2 + \cdots + \theta_n y_n^2, \qquad \theta_i \in \{ +1,-1\}.
\]
If $r$ of the signs $\theta_i$ are positive,   the form is said to have signature $(r,n-r)$. The special case of Theorem \ref{thm_main_R} for signature $(n,0)$ with $Q(y)=|y|^2$, so that $\ga(y) = (y,|y|^2)$ defines a paraboloid, recovers the main theorem of Pierce and Yung \cite{PieYun19}, which introduced the study of Carleson operators of Radon-type. The new work of this paper enables us to treat Radon-type behavior defined by quadratic forms of \emph{arbitrary} signature.  To do so, we develop a new perspective that provides a  versatile framework for extracting decay from an oscillatory integral operator at the heart of the argument. This perspective replaces complicated ad hoc arguments in the earlier work of Pierce and Yung, and will be useful to study even broader classes of Carleson operators of Radon-type. We now describe the context for these operators, and then describe the present work.

 \subsection{Relation to previous work}
Carleson operators have their genesis in Carleson's 1966 work \cite{Car66} on the convergence of Fourier series. Carleson's theorem can now be summarized as the statement that the operator 
$f \mapsto \sup_{\lam \in \R} |T_\lam f(x) |$
satisfies an $L^2$ bound, where for each $\lam \in \R$ we define 
\[ T_\lam f (x) = \mathrm{p.v.} \int_{\T} f(x-y) e^{i \lam y} \frac{dy}{y}.\]
Carleson's work was followed closely by the proof of $L^p$ bounds for $1<p<\infty$ by Hunt \cite{Hun68}, an $\R^n$ version by Sj\"olin \cite{Sjo71}, and importantly, a new method of proof by C. Fefferman \cite{Fef73}. These approaches then led to other celebrated works such as that of Lacey and Thiele \cite{LacThi} on the bilinear Hilbert transform. 

Elias M. Stein expanded the field of vision by asking  whether $L^p$ bounds hold for all $1<p<\infty$ for polynomial Carleson operators. For each fixed real-valued   polynomial $P$ on $\R^n$, define 
\[ T_P f(x) = \int_{\R^n} f(x-y) e^{i P (y)} K(y) dy,\]
for $K$ a Calder\'on-Zygmund kernel (see (\ref{CZ_dfn})). The corresponding Carleson operator, over a class $\Pcal$ of polynomials, takes the form
\beq\label{Polycar}
f \mapsto \sup_{P \in \mathcal{P}} |T_P f(x)|.
\eeq
When $\Pcal$ is the class $\Pcal_d$ of all real-valued polynomials on $\R^n$ of degree at most $d$ (for a fixed $d$), this is called the polynomial Carleson operator. Stein initially proved (\ref{Polycar}) is bounded on $L^2(\R)$ for $\Pcal = \Span\{y^2\}$ \cite{Stein95}; this was based on special properties of Gaussians. Next, Stein and Wainger developed a $TT^*$ argument to prove (\ref{Polycar}) is bounded on $L^p(\R^n)$ for $1<p<\infty$ when $\Pcal = \Span \{ y^\al : \al \in \Z_{\geq 0}^n, 2 \leq |\al| \leq d\}$; that is, $\Pcal$ is comprised of all polynomials of degree at most $d$, with no linear terms \cite{SWCarl}. In dimension $n=1$, Lie  proved $L^p$ bounds for all $1<p<\infty$ for the full class $\Pcal_d$ with no restrictions \cite{Lie09,Lie20}. Finally, independent work of Lie and Zorin-Kranich resolved Stein's question in arbitrary dimensions  \cite{Lie17x,Zor21}; see also the comprehensive survey of Lie \cite{Lie24}.

The present manuscript fits in the broader context of a further question raised by Stein:   whether an $L^p$ bound holds for  a Carleson operator of Radon-type. Let $K$ again be a Calder\'on-Zygmund kernel and let $\ga : \R^n \maps \R^m$ define an $n$-dimensional submanifold in $\R^m$ of finite type (that is, such that it has at most a finite order of contact with any affine hyperplane).
Then define for each fixed real-valued polynomial $P$ on $\R^n$,  an operator initially acting on Schwartz functions by
\beq\label{RP_dfn}
 R_P^\ga f(x) = \int_{\R^n} f(x-\ga(y)) e^{i P(y)} K(y) dy.
 \eeq
The corresponding maximally modulated Carleson operator is then of the form 
\beq\label{RP_sup_dfn}
f \mapsto \sup_{P \in \mathcal{P}} |R_P^\ga f(x)|,
\eeq
in which $\Pcal$ is a chosen class of polynomials, such as $\Pcal_d$.
This setting was first studied by Pierce  and Yung \cite{PieYun19}, who in particular considered the paraboloid $\ga(y) = (y,|y|^2) \subset \R^{n+1}$ for $n \geq 2$. That work proved that (\ref{RP_sup_dfn}) satisfies an \emph{a priori} bound on $L^p(\R^{n+1})$ for all $1<p<\infty$ for the class $\Pcal = \Span\{p_2,p_3, \ldots, p_d\}$ where $p_2, p_3, \ldots, p_d$ are $d-1$ fixed polynomials with the property that $p_j$ is homogeneous of degree $j$, and $p_2\not\in \mathrm{Span}_\R\{ |y|^2\}$. This work generalized the $TT^*$ methods of Stein and Wainger \cite{SWCarl}, while also introducing techniques from   Littlewood-Paley theory,  square functions, and   delicate arguments with oscillatory integral estimates. The restriction that $p_2\not\in \mathrm{Span}_\R\{ |y|^2\}$ was a natural consequence of applying   $TT^*$ methods to a maximally modulated operator: such methods are only expected to succeed when the phase has sufficient independence from the defining function of the submanifold $\ga(y)$.

 \subsection{Overview of the argument}\label{sec_roadmap}
 In   this paper, we prove $L^p$ bounds for  hypersurfaces $\ga(y) =(y,Q(y))$ for quadratic forms $Q$ of arbitrary signature. Our method also has a $TT^*$ argument at its core, and a crucial part of the argument is to extract decay from an oscillatory integral operator.
We divide the proof of Theorem \ref{thm_main_R} into five steps. Steps 1-4 are natural generalizations of the method developed in \cite{PieYun19}, and although we provide a  self-contained treatment of these generalizations, we are efficient in our exposition. Step 5 departs significantly from the approach of \cite{PieYun19} and is the   novel technical heart of the paper.
 For the convenience of the reader, we now give a concise roadmap of these steps. As the main interest is in quadratic hypersurfaces  $\ga$ with signature $(r,n-r)$ with $r \geq 1$, not previously covered by the work of \cite{PieYun19}, for simplicity we will   refer to $\ga$ as a hyperboloid.

{\bf Step 1} (\S \ref{sec_red_L2}) reduces the proof of Theorem \ref{thm_main_R} to proving an $L^2$ estimate for an auxiliary operator. Fix a set of homogeneous polynomials $p_1,\ldots, p_M$, and let 
$P_\lam(y) = \sum_j \lam_j p_j(y)$, $\|\lam\| = \sum_j |\lam_j|$. 
Define for $\eta$ in an appropriate class of $C^1$ functions and real scaling parameter $a>0$, the operator
\beq\label{I_a_intro}
^{(\eta)}I_a^\lam f(x,t) = \int_{\R^n} f(x-y,t-Q(y)) e^{iP_\lam(y/a)} \frac{1}{a^n} \eta\left( \frac{y}{a} \right) dy, \qquad (x,t) \in \R^{n+1}.
\eeq
This integrates over a portion of the hyperboloid.
Roughly speaking, we show that Theorem \ref{thm_main_R} holds if there exists some $\del>0$ such that for every $r \geq 1$,
\beq\label{I_intro}
\| \sup_{r \leq \| \lam \|\leq 2r} \sup_{k \in \Z} |^{(\eta)}I_{2^k}^\lam f| \,\|_{L^2(\R^{n+1})} \ll r^{-\del} \|f\|_{L^2(\R^{n+1})}.\eeq
This reduction is encapsulated in Theorem \ref{thm_I_to_R}, and its proof uses results for maximally truncated singular Radon transforms over $\ga$, which we recall in \S \ref{sec_prelim}. 
Since it is an interesting open question to study a maximally modulated operator of the form (\ref{RP_sup_dfn}), over an appropriate  class $\Pcal$ of phases, for other types of  submanifolds $\ga$, we demonstrate this initial step   in a quite general setting.  

{\bf Step 2} (\S \ref{sec_square})   introduces a  maximal version of a ``flatter''  auxiliary  operator. Rather than integrating over a hyperboloid, this auxiliary operator integrates over $\R^{n+1}$ but with anisotropic scaling compatible with the hyperboloid. Define for each $C^1$ bump function $\eta$ and real scaling parameter $a>0$, the operator
\[
 ^{(\eta)}J_a^\lam f(x,t) = \int_{\R^{n+1}} f(x-y,t-s) e^{i P_\lam(y/a)} \frac{1}{a^n} \eta( \frac{y}{a}) \frac{1}{a^{2}} \zeta(\frac{s}{a^{2}}) dy ds, \qquad (x,t) \in \R^{n+1}.
\]
This   operator satisfies   an $L^2$ bound analogous to (\ref{I_intro}), by the work of \cite{SWCarl}.
The heart of the second step is to introduce a square function that encapsulates the comparison of the auxiliary operator (\ref{I_a_intro}) to this flatter analogue. Roughly speaking, the square function, say $S_r(f)$, takes the following form:
\beq\label{S_crude}
( \sum_{k \in \Z} ( \sup_{r \leq \|\lam\|< 2r} | ( ^{(\eta_k)}I_{2^k}^\lam - ^{(\eta_k)}J_{2^k}^\lam) f| )^2 )^{1/2}.
\eeq
(More precisely, it is defined using a Littlewood-Paley decomposition that localizes the Fourier support of $f$ in the last variable.)
Bounding this square function is then the key to proving (\ref{I_intro}). 
The central goal is to prove that there exists $\del>0$ such that for any Schwartz function $f$ on $\R^{n+1}$, and any $r \geq 1$,
\beq\label{S_r_intro}
\| S_r(f)\|_{L^2(\R^{n+1})} \ll r^{-\del} \|f\|_{L^2(\R^{n+1})}.
\eeq
In order to verify this, one can see from the crude definition of $S_r(f)$ suggested by (\ref{S_crude}) that it is natural to aim to prove that uniformly in $k \in \Z$,
\beq\label{IJ_rough}
\| \sup_{r \leq \|\lam\|< 2r} | ( ^{(\eta_k)}I_{2^k}^\lam - ^{(\eta_k)}J_{2^k}^\lam) f|\, \|_{L^2(\R^{n+1})}\ll r^{-\del} 2^{-\ep |k|}\|f\|_{L^2(\R^{n+1})},
\eeq
for some $\del>0,\ep>0$.
(More precisely, one must also prove an estimate that takes into account the Littlewood-Paley decomposition.)
Theorem \ref{thm_S_to_IJ} formally records that if an appropriate bound roughly of the form (\ref{IJ_rough}) holds, then the desired square function estimate (\ref{S_r_intro}) holds on $L^2$. Thus in order to complete the proof of our main theorem, we must verify that the hypothesis (\ref{IJ_rough}) of Theorem \ref{thm_S_to_IJ} is true.  (Like Step 1, Step 2 also works for more general manifolds $\ga$.)

{\bf Step 3} (\S \ref{sec_change_var}) defines a crucial change of variables. Roughly speaking, fix $k \in \Z$ and let $T$ denote the operator 
\[
f \mapsto \sup_{r \leq \|\lam\|< 2r} | ( ^{(\eta_k)}I_{2^k}^\lam - ^{(\eta_k)}J_{2^k}^\lam) f|.
\]
(More precisely, to define $T$ we linearize this operator, using stopping-times.)
Our general strategy for proving (\ref{IJ_rough}) is to bound $TT^*$ on $L^2$, with a norm that exhibits decay like $r^{-\del}2^{-\ep |k|}$ for some $\del,\ep>0$. In Step 3, we define a change of variables that will allow us to show that $TT^* f(x,t)$ can be written as a sum of $n$ convolutions $(f * K_{\sharp,l}^{\nu,\mu})(x,t)$ for an appropriate kernel $K_{\sharp,l}^{\nu,\mu}$, for each $1 \leq l \leq n$. Roughly speaking, if $TT^*f$ is initially an integral over $(u,z) \in \R^n \times \R^n$, for each $1 \leq l \leq n$ we construct a change of variables to $(u,\tau,\sig) \in \R^n \times \R \times \R^{n-1}$ so that  
\[ K_{\sharp,l}^{\nu,\mu}(u,\tau)=\int_{\R^{n-1}}e^{iP_\nu(u+z)-iP_\mu(z)}\Psi(u,z)d\sigma, \qquad (u,\tau) \in \R^{n+1},
\]
where $\sig \in \R^{n-1}$ is defined implicitly in terms of $u,z$ for every $z \in \R^n$, $\Psi(u,z)$ is a $C^1$ bump function, and $\nu,\mu$ are arbitrary stopping-times. We do this for each $ 1 \leq l \leq n$, depending on whether $u$ lies in the $l$-th sector of a partition of $\R^n$; this guarantees the compact support of $\sig$ in the region of integration.
To define the change of variables in Step 3, we crucially use the fact that in our main theorem we assume that $\ga(y) =(y,Q(y))$ for a non-degenerate \emph{quadratic} form $Q$.

{\bf Step 4} (\S \ref{sec_TT_to_K}) shows that if an appropriate upper bound holds pointwise for the kernel $K_{\sharp,l}^{\nu,\mu}(u,\tau)$ defined in Step 3, then the hypothesis (\ref{IJ_rough}) of Theorem \ref{thm_S_to_IJ} is true. This deduction is encapsulated in Theorem \ref{thm_K_to_S}.
This is the step in which $TT^*$ methods are clearly applied.

{\bf Step 5} (\S \ref{sec_K}) proves that the desired pointwise upper bound for $K_{\sharp,l}^{\nu,\mu}(u,\tau)$ holds, as stated in Theorem \ref{thm_main_K}. This step crucially uses the assumption (\ref{Q_class}) on the class of polynomials $\Qcal_d$ over which the maximal modulation occurs in Theorem \ref{thm_main_R}. 
We use the explicit change of variables chosen in  Step 3 in order to express $K_{\sharp,l}^{\nu,\mu}(u,\tau)$ as an oscillatory integral with a phase that depends on stopping-time functions. The key is to show that this phase is sufficiently large (almost all the time), so that the oscillatory integral has a satisfactory upper bound (via a van der Corput estimate). To accomplish this, we develop a new framework which differs conceptually from \cite{PieYun19}, and is both simpler and more flexible.

 An essential difficulty is that the $TT^*$ argument introduces two different stopping-time functions, one of which depends on variables of integration (and is hence ``bad'') and one of which does not (and is hence ``good''). It is critical that the pointwise upper bound for $K_{\sharp,l}^{\nu,\mu}(u,\tau)$ has no dependence on the bad stopping-time function, yet the phase in this oscillatory integral includes terms that depend on both the good and the bad stopping-times. We package the portion of the phase that depends on the bad stopping-time as the image of a linear operator; then by projecting onto a subspace orthogonal to this image, we can study an expression from which the bad stopping-time has been removed. We show that this expression is almost always large,  and then deduce that the phase of $K_{\sharp,l}^{\nu,\mu}(u,\tau)$ is almost always large, allowing us to complete the argument.

The study of Carleson operators of Radon-type, of the form (\ref{RP_sup_dfn}) for a given submanifold $\ga$, has only recently begun, after they were introduced in \cite{PieYun19}.  Several intrinsic difficulties of bounding such operators have been described in \cite[\S 2]{PieYun19}; here, we remark specifically on the restrictions present in Theorem \ref{thm_main_R}.
First, the case of dimension $n=1$ is out of reach of the methods of this paper, effectively due to dimension counting; see Remark \ref{remark_ngeq2}. For $n=1$, it is natural to ask whether for a monomial curve $\ga(y) = (y,y^m)$, the operator (\ref{RP_sup_dfn}) is bounded on $L^p$ for $1<p<\infty$ for $\Pcal = \Pcal_d$ the class of polynomials of degree at most $d$. For $m=1$ this can be reduced to an instance of the original Carleson theorem; see \cite[p. 2980]{GPRY17}. This question is open for degree $m \geq 2$; the first results in this direction, weaker than Theorem \ref{thm_main_R}, were   established by   Guo, Pierce, Roos and Yung \cite{GPRY17}. Recently, other authors have obtained interesting results on a cluster of    closely related questions (Carleson operators with anisotropic scalings or with fewnomial phases), e.g. \cite{Guo17,Roo19,Ram21a,Ram21b,Bec22x}.   Nevertheless,   fundamental questions about operators of the form (\ref{RP_sup_dfn}) remain open for $n=1$. 
 When $\ga(y) = (y,Q(y))$ is defined by a non-degenerate quadratic form and $n \geq 2$, our methods impose that  $\Span\{p_2,\ldots,p_d\}$   contains no linear polynomials,  and that the (homogeneous) quadratic contribution is not in the span of $Q(y)$ (see Remarks \ref{remark_no_linear} and  \ref{remark_no_quad}).  These restrictions are a natural consequence of applying $TT^*$ methods; one might expect that time-frequency methods must be applied in order to avoid such  restrictions.  
More generally, it is interesting to ask whether the class $\Qcal_d$ in Theorem \ref{thm_main_R} could be allowed to have more degrees of freedom, and it would be very interesting to obtain results for (\ref{RP_sup_dfn}) for more general submanifolds $\ga$.

\section{Preliminaries}\label{sec_prelim}
In this section, we briefly record well-known properties of singular and maximal Radon transforms, which we will call upon in \S \ref{sec_red_L2}.
\subsection{Calder\'on-Zygmund kernels}
We work with
Calder\'on-Zygmund kernels $K$ defined as follows: $K$  is a tempered distribution that agrees with a $C^1$ function $K(x)$ for $x \neq 0$, such that 
\beq\label{CZ_dfn}
 |\partial_x^\al K(x)| \ll |x|^{-n-|\al|}, \qquad 0 \leq |\al| \leq 1,
 \eeq
 and $\hat{K}$ is an $L^\infty$ function.
Such a kernel admits a decomposition 
\[ K(x)=\sum_{j=-\infty}^\infty 2^{-nj}\phi_j(2^{-j}x)=:\sum_{j=-\infty}^\infty K_j(x)\]
    where each $\phi_j$ has the following properties (see  \cite[\S 5]{SWCarl}): 
    \begin{enumerate}[label=(\roman*)]
        \item $\phi_j$ is a $C^1$ function with support in $1/4<|x|\le1$,
        \item $|\partial_x^\a\phi_j(x)|\le C$ for $0\le|\a|\le 1$ for some constant $C$ that is uniform in $j$,
        \item $\int_{\R^n}\phi_j(x)dx=0$ for every $j$. 
    \end{enumerate}
    
 \subsection{Singular and maximal Radon transforms}
    Let $\ga  \subset \R^{m}$ be a submanifold described by a polynomial mapping  $\ga  = (\ga_1,\ldots, \ga_{m}) : \R^n \maps \R^m$ 
 where for each $i$, $\ga_i: \R^n \maps \R$ is a polynomial with real coefficients. The singular Radon transform
 \beq\label{sing_Rad_dfn}
 \Tcal f(x) = \int_{\R^n} f(x-\ga(y)) K(y) dy, \qquad x \in \R^m,
 \eeq
 initially defined for $f$ of Schwartz class, extends to a bounded operator on $L^p(\R^m)$ for $1<p<\infty$. This follows from \cite[Ch. XI \S 4.4-4.5]{SteinHA}. (Precisely, the argument presented in \S 4.5 of that text assumes that $K$ is homogeneous, but substituting the decomposition $K_j$ described above for the functions $K_j$ in that text, and using the properties (i), (ii), (iii) of the functions $\phi_j$ confirms that the resulting measures $\{dm^j\}$ satisfy the formalism in \S 4.4, and the argument can proceed verbatim.)

Next,  define the maximally truncated singular Radon transform
\beq\label{max_trunc_Rad_dfn} \sup_{t>0}|\Tcal_tf(x) | =  \sup_{t>0}| \int_{|y| \geq t} f(x-\ga(y)) K(y) dy|.
\eeq
When $\ga(y) = (y,Q(y))$ with $Q(y) = |y|^2$, \cite[Appendix]{PieYun19} proves that this operator extends to a bounded operator on $L^p(\R^{n+1})$ for all $1<p<\infty$; the same argument applies (with only superficial changes) for any diagonal form $Q(y) = \sum_{1 \leq i \leq n} \theta_i y_i^2$ for any signs $\theta_i \in \{\pm 1\}$; this is the only case we require to prove Theorem \ref{thm_main_R}. But in \S \ref{sec_red_L2} of our work, it is no additional trouble to consider more general submanifolds defined by any polynomial mapping $\ga:\R^n \maps \R^m$. Thus for such $\ga$, we note that the operator (\ref{max_trunc_Rad_dfn}) extends to a bounded operator on $L^p(\R^m)$ for all $1<p<\infty$ by   \cite[Thm. 1.30]{MSZ20_Boot}  (although with slightly different constraints on $K$, see Remark \ref{remark_jump}); see also \cite{DuoRub86,JSW08} for related results.

  Finally, define 
for a Schwartz function $f$,
\beq\label{M_rad_gamma}
\Mcal_\ga f(x) = \sup_{a>0} \int_{\R^n} |f (x-\ga(y))| \frac{1}{a^n} \chi_{B_1}(\frac{y}{a}) dy.
\eeq
For $\ga$ an $n$-dimensional submanifold in $\R^m$ of finite type (this is satisfied if $\ga$ is defined by a polynomial mapping),   $\Mcal_\ga$ satisfies an \emph{a priori} bound on $L^p$ for all $1 < p \leq \infty$ by \cite[Ch. XI Thm. 1]{SteinHA}.
  
  \subsection{Reduction to the simplest type of quadratic form}
In our main theorem we assume
$\ga(y) = (y,Q(y))$   for a non-degenerate quadratic form $Q$. We can restrict to proving Theorem \ref{thm_main_R} for 
\beq\label{Q_dfn_0}
Q(y)= \sum_{i=1}^n\theta_iy_i^2,\qquad\theta_i\in\{\pm 1\}.
\eeq
Indeed, suppose first of all that $Q(y)$ is a non-degenerate quadratic form in $\R[X_1,\ldots, X_n]$, with corresponding real symmetric $n \times n$ matrix $Q$ such that $Q(y) = y^T Qy$. Then by the spectral theorem, there exists an orthogonal matrix $B$ such that $BQB^{-1}=A=\mathrm{diag}(a_1,\ldots, a_n)$ with all $a_i \neq 0$.
Then for $P_\lam(y) = \sum_{1 \leq j \leq M} \lam_j p_j(y)$, our operator of interest is
 \begin{align*} R^Q_{P_\lam,K}f(x,t) &:= \int_{\R^n}f(x-y,t-Q(y)) e^{iP_\lam(y)}K(y)dy\\
 &= \int_{\R^n}f(x-y,t-(By)^TA(By)) e^{iP_\lam(y)}K(y)dy.
 \end{align*}
Define $\del_B f(x,t) = f(Bx,t)$; 
 $P_\lam^B (y) = \sum_{1 \leq j \leq M} \lam_j p_j^B(y)$ where $p_j^B(y) = p_j(B^{-1}y)$; and $K^B(y) = K(B^{-1}y)$, which is also a Calder\'on-Zygmund kernel as in (\ref{CZ_dfn}). Note that $p_2 \not\in \mathrm{Span}(Q)$ if and only if $p_2^B \not\in \mathrm{Span}(A)$. 
 After a change of variables $u=By$, we see that as an operator
 \[R^Q_{P_\lam,K}f = \del_B \circ R^A_{P^B_{\lam},K^B}\circ \del_{B^{-1}} f.\]  
Thus, to prove that  for any fixed Calder\'on-Zygmund kernel $K$,   and any fixed polynomials $p_1,\ldots, p_M$, the operator $f \mapsto \sup_{P \in \Span\{p_1,\ldots,p_M\}} |R_{P,K}^Q f|$ is bounded on $L^p$,   it suffices to prove that for any fixed Calder\'on-Zygmund kernel $K$,   and any fixed polynomials $p_1,\ldots, p_M$ the operator 
\[f \mapsto \sup_{P \in \Span\{p_1,\ldots,p_M\}} |R_{P,K}^A f|\]
is bounded on $L^p$. Similarly, another change of variables shows that we may rescale to the case where each $a_i = \pm 1$, so that from now on we may assume that $Q(y)$ takes the form (\ref{Q_dfn_0}). This specific form of $Q$ only becomes useful in Step 3 (\S \ref{sec_change_var}), when it motivates an explicit change of variables.

\section{Step 1: Reduction to an $L^2$ estimate}\label{sec_red_L2}
We begin by introducing an auxiliary operator, and show that an $L^2$ bound for this operator implies $L^p$ bounds for the Carleson operator (\ref{RP_sup_dfn})  for $1<p<\infty$. For our main theorem, we only apply this in the case that $\ga(y) =(y,Q(y))\subset \R^{n+1}$, but it is no trouble to work more generally in this section, for readers interested in the case where   $\ga : \R^n \maps \R^m$ is an $n$-dimensional submanifold in $\R^m$ defined by polynomials.

We now fix the class of polynomial phases we will consider.
Let $p_1,\ldots,p_M$ be fixed homogeneous polynomials in $\R[X_1,\ldots,X_n]$. For each $1 \leq m \leq M$, we will set $d_m$ to be the degree of $p_m$.
We  assume that the polynomials $p_m$ are linearly independent over $\R$, which causes no limitations in our ultimate results, which involve a supremum over polynomials in the span of $p_1,\ldots, p_M$. Precisely, we set 
\beq\label{P_span}
\Pcal = \Span_\R \{p_1,\ldots, p_M\} 
\eeq
  so that any $P \in \Pcal$ can be expressed as 
\[ P_\lam(y) = \sum_{m=1}^M \lam_m p_m(y),\]
for a certain $\lam = (\lam_1,\ldots, \lam_M) \in \R^M$. We do not yet need to make any further assumptions about the polynomials $p_m$.

Next we define an auxiliary operator. Fix a $C^1$ bump function $\eta$ supported in the unit ball $B_1$ in  $\R^n$ and a real number $a>0$. Then define the operator
\beq\label{I_dfn}
^{(\eta)}I_a^\lam f(x,t) = \int_{\R^n} f((x,t)-\ga(y)) e^{iP_\lam(y/a)} \frac{1}{a^n} \eta\left( \frac{y}{a} \right) dy.
\eeq
This is dominated pointwise by the maximal Radon transform $\Mcal_\ga$ defined in (\ref{M_rad_gamma}), so it admits an \emph{a priori} estimate  for $1<p\leq \infty$. Indeed,   if the bump function $\eta$ varies over a family $\{\eta_k\}_k$ of $C^1$ bump functions supported in $B_1$, with uniformly bounded $C^1$ norm,  then
\beq\label{I_trivial_p}
\|\sup_{\lam \in \R^M} \sup_{ k \in \Z}  |{^{(\eta_k)}I_{2^k}^\lam} f|\;\|_{L^p(\R^{n+1})}
\ll \|f\|_{L^p(\R^{n+1})}, \qquad 1 < p \leq \infty.
\eeq
This will serve as a trivial bound.
However,   it is possible to use oscillation within $^{(\eta_k)}I_{2^k}^\lam$ to prove that the $L^2$ norm exhibits decay, measured in terms of the ``size'' of $\lam$. For this purpose, 
we define the isotropic norm for $\lam \in \R^M$:
\beq\label{lam_norm_iso}
\|\lam\| = \sum_{m=1}^M |\lam_m|.
\eeq

The main result of this section is:
\begin{thm}\label{thm_I_to_R}
 Let $p_1(y),\ldots, p_M(y)$ with $y \in \R^n$ be a fixed set of linearly independent, homogeneous polynomials in $\R[X_1,\ldots,X_n]$, and for any $\lam \in \R^M$ set $P_\lam = \sum_{m=1}^M \lam_m p_m$. 
 Let $\ga = (y,Q(y)) \subset \R^{n+1}$.
 Suppose that for any family $\{\eta_k \}_{k \in \Z}$ of $C^1$ bump functions supported in the unit ball $B_1(\R^n)$ with $C^1$ norm uniformly bounded by $1$, there exists a fixed $\del>0$ such that for any Schwartz function $f$ on $\R^{n+1}$ and any $r \geq 1$, 
\beq\label{IP_ineq}
\| \sup_{\bstack{\lam \in \R^M}{r \leq \| \lambda \| < 2r}} \sup_{k \in \Z} | ^{(\eta_k)}I_{2^k}^\lam f(x,t)| \|_{L^2(\R^{n+1})} \ll r^{-\del} \|f\|_{L^2(\R^{n+1})},
\eeq
in which the implicit constant may depend only on $n$ and the fixed polynomials $p_1,\ldots,p_M$.
Then   for each $1<p<\infty$, we have the \emph{a priori} estimate
\beq\label{RP_ineq}
  \| \sup_{\lam \in \R^M} |R_{P_\lam}^\ga f| \|_{L^p(\R^{n+1})} \ll \|f\|_{L^p(\R^{n+1})},
\eeq
in which the implicit constant may depend only on $p, n,$ and the fixed polynomials $p_1,\ldots,p_M$.
\end{thm}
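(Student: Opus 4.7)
The plan is to decompose the Calder\'on--Zygmund kernel $K$ dyadically, use homogeneity of the $p_m$ to identify each dyadic piece with an instance of the auxiliary operator ${}^{(\eta)}I_a^\lam$, then split the resulting sum at a transition scale where oscillation becomes significant. Writing $K = \sum_{j\in\Z} K_j$ as in \S\ref{sec_prelim}, one has
\[
R_{P_\lam}^\ga f(x) = \sum_{j\in\Z} R_j^\lam f(x), \qquad R_j^\lam f(x) := \int_{\R^n} f(x-\ga(y)) e^{iP_\lam(y)} K_j(y)\,dy.
\]
The substitution $y = 2^j u$ combined with the homogeneity $p_m(2^j u) = 2^{jd_m} p_m(u)$ identifies $R_j^\lam f = {}^{(\phi_j)}I_{2^j}^{\mu_j(\lam)} f$ with $\mu_j(\lam)_m := \lam_m 2^{jd_m}$. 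For each $\lam$ define the transition scale $j_*(\lam) \in \Z$ by $\|\mu_{j_*(\lam)}(\lam)\| \in [1,2)$, and decompose $R_{P_\lam}^\ga f = L^\lam f + H^\lam f$, where $L^\lam$ sums over $j \le j_*(\lam)$ (low-oscillation regime) and $H^\lam$ sums over $j > j_*(\lam)$ (high-oscillation regime).

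For the low piece, Taylor-expand $e^{iP_\lam(y)} = 1 + (e^{iP_\lam(y)} - 1)$. The ``$1$'' contribution is a truncated singular Radon transform dominated pointwise by $\sup_{t>0} |\Tcal_t f(x)|$, and the error contribution satisfies $|e^{iP_\lam(y)} - 1| \le |P_\lam(y)| \le C\|\mu_j(\lam)\|$ on $\supp K_j$; since $\|\mu_j(\lam)\|$ decays geometrically in $j_*(\lam) - j$, the sum in $j \le j_*(\lam)$ is $O(1)$, yielding a pointwise bound of $C\Mcal_\ga f(x)$. Both $\sup_t|\Tcal_t f|$ and $\Mcal_\ga f$ are $L^p$-bounded for $1<p<\infty$ by \S\ref{sec_prelim}, so $\|\sup_\lam |L^\lam f|\|_{L^p} \ll \|f\|_{L^p}$ uniformly.

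For the high piece, linearize with a measurable stopping time $\lam:\R^m \to \R^M$ and dyadically decompose based on $\|\lam(x)\| \in [N,2N)$ for $N \ge 1$. For $j > j_*(\lam)$ with $\|\lam\| \sim N$, $\|\mu_j(\lam)\|$ lies in a dyadic scale of size at least $N \cdot 2^{(j-j_*(\lam))d_*}$, where $d_* := \min_m d_m$. Applying the hypothesis (\ref{IP_ineq}) at each such dyadic scale and summing the resulting geometric series in $j$ yields $\|\sup_{\|\lam\| \in [N,2N)} |H^\lam f|\|_{L^2} \ll N^{-\del}\|f\|_{L^2}$; square-summing over dyadic $N \ge 1$ then gives the $L^2$ bound on $\sup_\lam |H^\lam f|$. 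To extend to $L^p$ for all $1<p<\infty$, I would interpolate this $L^2$-decay bound against a trivial $L^p$ bound on $H^\lam$ whose operator norm grows at most polynomially in $N$ (the $L^p$-boundedness of singular Radon transforms from \S\ref{sec_prelim} applies to $R_{P_\lam}^\ga$ with Calder\'on--Zygmund constants perturbed polynomially in $\|\lam\|$ by the modulation $e^{iP_\lam}$); complex interpolation yields $L^q$ decay in $N$ for $q$ sufficiently close to $2$, and a standard Marcinkiewicz interpolation against the $L^p$-bounded low piece extends the result to all $1 < p < \infty$. The main obstacle I anticipate is the bookkeeping in the high piece: coordinating the dyadic summations in $j$ and $N$ so that the $r^{-\del}$ decay from (\ref{IP_ineq}) is preserved through the linearization, and calibrating the polynomial growth exponent for the modulated Radon operator against $\del$ so that interpolation near $p=2$ produces a positive gain.
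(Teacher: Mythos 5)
Your low-oscillation piece is correct and matches the paper's treatment: the split at the scale where the rescaled modulation parameter has size $\approx 1$, the expansion $e^{iP_\lam}=1+O(|P_\lam|)$, domination of the main term by the maximally truncated singular Radon transform and of the error by $\Mcal_\ga$, all as in \S\ref{sec_red_L2}. The gap is in the high-oscillation piece, and it is quantitative but fatal as written. You claim that for $j>j_*(\lam)$ with $\|\lam\|\sim N$ one has $\|\mu_j(\lam)\|\gtrsim N\cdot 2^{(j-j_*(\lam))d_*}$. This is false: by your own definition of $j_*(\lam)$ one has $\|\mu_{j_*(\lam)}(\lam)\|\in[1,2)$ \emph{regardless of} $N$, so $\|\mu_{j_*+k}(\lam)\|$ is only bounded below by $2^{kd_{\min}}$, with no factor of $N$. (Take $M=1$, $p_1$ of degree $2$, $\lam_1=N$: then $\|\mu_{j_*+1}\|\in[4,8)$ for every $N$.) Consequently the hypothesis (\ref{IP_ineq}) applied to the first high term yields only $O(1)$, the bound $\|\sup_{\|\lam\|\sim N}|H^\lam f|\|_{L^2}\ll N^{-\del}\|f\|_{L^2}$ does not follow (you only get $O(\|f\|_{L^2})$ uniformly in $N$), and your square-sum over dyadic $N$ diverges. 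You cannot repair this by moving the split to $j=0$ (which would restore the factor of $N$), because then the Taylor expansion in the low piece fails for large $\|\lam\|$.

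The repair is to not stratify by the size of $\|\lam\|$ at all. Interchange the supremum over $\lam$ with the sum over kernel scales: since $j\mapsto N(2^j\circ\lam)$ (equivalently, the size of the rescaled parameter) increases geometrically in $j$, each $\lam$ contributes at most boundedly many scales $j$ to each dyadic block of the rescaled parameter, so
\[
\sup_\lam|H^\lam f|\;\le\;\sum_{l\ge0}\ \sup_{\lam':\,N(\lam')\in[2^l,2^{l+1})}\ \sup_{k}\,\bigl|{}^{(\phi_k)}I_{2^k}^{\lam'}f\bigr|,
\]
and the $2^{-l\del}$ decay from (\ref{IP_ineq}) is what makes the sum over $l$ (not over ranges of $\|\lam\|$) converge. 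For the passage to $L^p$, interpolate each block operator's $L^2$ decay against the \emph{uniform} trivial bound (\ref{I_trivial_p}) coming from pointwise domination by $\Mcal_\ga$ — there is no need for, and genuine difficulty in, your proposed polynomially growing Calder\'on--Zygmund bound for the modulated truncated kernel, whose gradient and Fourier transform bounds degrade with $\|\lam\|$ and whose dyadic pieces cannot simply be summed.
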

Note that in the notation of Theorem \ref{thm_main_R}, the left-hand side of (\ref{RP_ineq}) is $\| \sup_{P \in \Pcal} |R_{P}^\ga f| \|_{L^p(\R^{n+1})}$ for the class $\Pcal$  in (\ref{P_span}). (For simplicity we have stated this theorem for values $r \geq 1$, but of course the same method of proof applies for $r \geq c$ for any fixed universal constant $c$, as we apply in (\ref{c_below}).)
The theorem also holds for any submanifold defined by a polynomial mapping $\ga : \R^n \maps \R^m$ under appropriate assumptions on $K$ (summarized in Remark \ref{remark_jump}), by the same argument we now describe.

The proof closely follows the original ideas of Stein and Wainger \cite{SWCarl} as adapted by Pierce and Yung for the paraboloid \cite[\S 5]{PieYun19}; we generalize those steps briefly here. 
We recall that the operator $R^\ga_{P_\lam}$ is defined in (\ref{RP_dfn}), and we apply the dyadic decomposition $K=\sum_j K_j$ from (\ref{CZ_dfn}).
Given a fixed $\lam \in \R^M$, we divide the indices $j$ according to  whether the phase polynomial $P_\lambda(y)$ is small or large on the support of $K_j(y)$. For this purpose, it is useful to let $d_m$ denote $\deg p_m$ for each $1 \leq m \leq M$, and   to define the nonisotropic norm
    \[ N(\lambda)=\sum_{m=1}^M|\lambda_m|^{1/d_m}. \]
Note that there exists a   constant $c_0>0$, depending only on $M$ and the multi-set of degrees
$\{d_1,\ldots, d_M\}$, such that 
\beq\label{N_comp}
N(\lam) \leq c_0\|\lam\| \qquad \text{for $N(\lam) \geq 1$}.
\eeq
Additionally, for any $1 \leq m \leq M$, $|\lam_m| \leq N(\lam)^{d_m}$.

For each fixed $\lam \in \R^M$, we then set
    \[ K_\lambda^+= \sum_{2^j\ge 1/N(\lambda)}K_j, \qquad K_\lambda^-=\sum_{2^j<1/N(\lambda)}K_j. \]
 For each $\lam \in \R^M,$ we split $R^\ga_{P_\lam}f=R^{\ga,+}_{P_\lambda}f+R^{\ga,-}_{P_\lambda}f$, where
    \[ R_{P_\lambda}^{\ga,\pm}f(x,t)=\int_{\R^n}f((x,t)-\ga(y))e^{iP_\lam(y)}K_\lambda^{\pm}(y)dy. \]  

\subsection{The case of large oscillation}
We first bound the operator $\sup_\lam|R_{P_\lam}^{\ga,+}f(x,t)|$, which corresponds to the case in which  the phase $P_\lam(y)$ is large, and oscillation plays a role.    Define the nonisotropic scaling 
    \[ 2^j\circ\lambda=(2^{jd_m}\lambda_m)_m. \]
 Here we use the assumption that each polynomial $p_m$ is homogeneous of degree $d_m$, so that $P_{2^j\circ\lambda}(y/2^j)=P_\lambda(y)$. Moreover, the nonisotropic norm is homogeneous with respect to this scaling: $N(2^j\circ\lambda)=2^jN(\lambda)$. 
    Consequently, for each fixed $\lam$, the operator decomposes as
    \[ R_{P_\lam}^{\ga,+} =   \sum_{N(2^j \circ \lam) \geq 1} {^{(\phi_j)}I_{2^j}^{2^j\circ\lambda}}  . \]
Thus, for a Schwartz function $f$, we have the pointwise inequality
\begin{align} 
\sup_\lambda |R_{P_\lam}^{\ga,+}f(x,t)|&\le \sup_\lambda\sum_{N(2^j\circ\lambda)\ge  1}\sup_{k\in\Z}|^{(\phi_k)}I_{2^k}^{2^j\circ\lambda}f(x,t)| \nonumber \\
&\le \sum_{l=0}^\infty \sup_{2^l \le N(\lambda')< 2^{l+1} }\sup_{k \in \Z}|^{(\phi_k)}I_{2^k}^{\lambda'}f(x,t)|.\label{R_I_sum}
\end{align}
The hypothesis of Theorem \ref{thm_I_to_R} provides an $L^2$ estimate for each operator on the right-hand-side.
Precisely, by (\ref{N_comp}), for $r \geq 1$, $\{\lam : N(\lam) \geq  r  \} \subseteq \{ \lam : c_0\|\lam\| \geq r \}$. Thus for any $r \geq 1$,
\[
\sup_{ N(\lambda') \geq   r}\sup_{k \in \Z}|^{(\phi_k)}I_{2^k}^{\lambda'}f(x,t)|
\leq \sup_{ \|\lam'\| \geq r/c_0}\sup_{k \in \Z}|^{(\phi_k)}I_{2^k}^{\lambda'}f(x,t)|
\leq \sum_{2^v \geq r/c_0}  \sup_{2^v \leq  \|\lam'\| <2^{v+1}}\sup_{k \in \Z}|^{(\phi_k)}I_{2^k}^{\lambda'}f(x,t)|.
\]
By the key hypothesis (\ref{IP_ineq}),
for any $r \geq 1$, the $L^2$ norm thus satisfies
\beq\label{c_below}
\| \sup_{ N(\lambda') \geq r}\sup_{k \in \Z}|^{(\phi_k)}I_{2^k}^{\lambda'}f| \; \|_{L^2} \leq  \sum_{2^v \geq r/c_0} (2^v)^{-\del} \|f\|_{L^2} \ll r^{-\del} \|f\|_{L^2}.
\eeq
On the other hand, recall the trivial bound (\ref{I_trivial_p}), valid for all $1<p< \infty$. Interpolation of (\ref{c_below}) and (\ref{I_trivial_p})
shows that for each $1<p<\infty$, there exists $\del(p)$ such that for any $r \geq 1$,
\beq\label{interpolation_I}
\| \sup_{ N(\lambda') \geq  r}\sup_{k \in \Z}|^{(\phi_k)}I_{2^k}^{\lambda'}f| \; \|_{L^p} \ll r^{-\del(p)} \|f\|_{L^p}.
\eeq
We remark that this interpolation argument leading to (\ref{interpolation_I}) requires several standard intermediate steps in order to pass from the \emph{a priori} inequalities to appropriate versions for all simple functions, which can then be interpolated. We refer to the detailed treatments of these intermediate steps for the exemplary case of maximal operators over curves, in \cite[Part II \S5]{SWCurv}.
Applying (\ref{interpolation_I}) in (\ref{R_I_sum}) after taking norms, shows that for each $1<p<\infty$,
\[\|\sup_\lambda |R_{P_\lam}^{\ga,+}f(x,t)|\|_{L^p} \leq \sum_{l=0}^\infty 2^{-l \del(p)} \|f\|_{L^p}\ll \|f\|_{L^p}.\]

\subsection{The case of small oscillation}
Next we consider the portion $R_{P_\lam}^{\ga,-}$ of the operator $R_{P_\lam}^\ga$, for which $P_\lam(y)$ is small, and oscillation does not play an important role.   The support of the kernel $K_\lambda^-(y)$ is contained in $|y|\le 1/N(\lambda)$. On this set, 
    \[ |e^{iP_\lam(y)}-1|\ll |P_\lam(y)|\leq \sum_{m=1}^M |\lambda_m||p_m(y)|\ll\sum_{m=1}^M (N(\lambda)|y|)^{d_m} \ll N(\lambda)|y|, \]
    where in the last inequality we applied the fact that $N(\lambda)|y|  \leq 1$. 
    The implicit constants   depend only the set of fixed polynomials $\{p_1,\ldots,p_M\}$. 
    For fixed $\lam$, we can then write
 \begin{multline}\label{R_minus}
 R_{P_\lam}^{\ga,-}f(x,t)=\int_{|y| \leq 1/N(\lam)}f((x,t)-\ga(y))K_\lambda^-(y)dy \\+O\left(N(\lambda)\int_{|y|\le 1/N(\lambda)}|y|^{-n+1}|f((x,t)-\g(y))|dy\right). \end{multline}
    The first term is controlled by a maximally truncated singular Radon transform, which is bounded on $L^p(\R^{n+1})$  for all $1<p<\infty$ by the observations recorded for (\ref{max_trunc_Rad_dfn}). 
The second term is controlled by the maximal operator $\Mcal_\ga$ defined in (\ref{M_rad_gamma}), since  
\begin{align*}
  N(\lam)\sum_{2^l \leq N(\lam)^{-1}} 2^{-l(n-1)} & \int_{2^{l-1} \leq |y| < 2^l} |f((x,t)-\ga(y))|   dy\\
&\ll  N(\lam)\sum_{2^l \leq N(\lam)^{-1}} 2^l \frac{1}{|B_{2^l}|} \int  |f((x,t)-\ga(y))| \chi_{B_{2^l}}(y)  dy
\\
& \ll N(\lam) N(\lam)^{-1} \Mcal_\ga f(x,t).
\end{align*}
In conclusion, for each $1<p<\infty$, 
$\|\sup_\lambda |R_{P_\lam}^{\ga,-}f|\;\|_{L^p}  \ll \|f\|_{L^p}.$
This completes the proof of Theorem \ref{thm_I_to_R}.

\begin{remark}\label{remark_jump} 
For readers interested in the more general case when $\ga$ is a polynomial mapping, we summarize how the fact that the operator (\ref{max_trunc_Rad_dfn})  extends to a bounded operator on $L^p(\R^m)$ for $1<p<\infty$ is deduced from   \cite{MSZ20_Boot}. Their work applies to kernels $K$ with the properties: (a) (boundedness) $|K(y)|\le C|y|^{-n}$ for all nonzero $y\in\R^n\setminus\{0\}$;  (b) (cancellation) $\int_{r \leq |y| \leq R} K(y) dy =0$ for $0<r<R< \infty$;  (c) (smoothness)
$|K(x) - K(x+y)| \leq \omega_K(|y|/|x|) |x|^{-n} $
for all $x,y$ such that $|y| \leq |x|/2$; here $\omega_K$ is a modulus of continuity. 
For such a kernel, \cite[Thm. 1.30]{MSZ20_Boot} states that if $\| \omega_K^\theta\|_{\mathrm{logDini}} + \| \omega_K^{\theta/2}\|_{\mathrm{Dini}} < \infty $ for some $\theta \in (0,1]$ then for every $p \in \{ 1+\theta, (1+\theta)'\}$ and $f \in L^p(\R^m)$, 
$ J_2^p( (\Tcal_t f)_{t>0}: \R^m \maps \C) \ll_{m,p} \|f\|_{L^p}.$
Here the jump norm is
 \begin{eqnarray*}
 J_2^p ( (\Tcal_t f)_{t>0} : \R^m \maps \C)
 	& = &\sup_{\lam>0} \| \lam (N_\lam (\Tcal_t(f): t >0))^{1/2} \|_{L^p}\\
	 & = & \sup_{\lam>0} \| \lam (\sup\{ J : \min_{0 <j \leq J} | \Tcal_{t_j} f(\cdot) - \Tcal_{t_{j-1}}f(\cdot)| \geq \lambda\})^{1/2} \|_{L^p}
	 \end{eqnarray*}
	 in which the minimum is over all $0< j \leq J$ and all $t_0< \cdots < t_J$ with $t_j >0$. In particular, by \cite[Lemma 2.3]{MSZ20_Interp}, for a given $1<p<\infty$ for which Theorem 1.30 in \cite{MSZ20_Boot} holds, then for $r \in (2,\infty]$ the $r$-variational semi-norm $V^r(\Tcal_t f: t>0)$ satisfies 
 \beq\label{MSZK_result} \|V^r(\Tcal_t f : t>0)\|_{L^{p,\infty}} \ll_{p,r} J_2^p(\Tcal_t f : t>0) \ll_{m,p} \|f\|_{L^p}.\eeq

If for example $K$ is homogeneous of degree $-n$, smooth away from the origin, and satisfies the cancellation condition $\int_{|y|=1}K(y)d\sig(y)=0$, then it certainly satisfies the above properties (a), (b), (c) with   $\omega_K(t)=ct$ for an appropriate constant $c\in(0,\infty)$. 
Similarly if $K$ is $C^1$ away from the origin, satisfies (\ref{CZ_dfn}) and (b) (which is somewhat stronger than the assumption $\hat{K} \in L^\infty$) then again all three properties are satisfied with  $\omega_K(t)=ct$ for an appropriate constant $c\in(0,\infty)$. 
For $w_K(t)=ct$,  the Dini and $\log$-Dini norms (see \cite[Eqn. (1.24)]{MSZ20_Boot}) of $(\w_K(t))^\theta=c^\theta t^\theta$ are finite for every $\theta\in (0,1]$. Therefore, by (\ref{MSZK_result}) we conclude that for each $p\in(1,\infty)$ and $r\in (2,\infty]$ 
\beq\label{V_term} \|V^r(\Tcal_t f:t>0)\|_{L^{p,\infty}}\ll_{p,r}\|f\|_{L^p}.  \eeq
For $r=\infty$ we have the pointwise inequality
\[  \sup_{t>0}|\Tcal_t f(x)|\le |\Tcal f(x)|+V^r(\Tcal_t f(x):t>0). \]
The operator $\Tcal f(x)$ extends to a bounded operator on $L^p(\R^m)$  for $1<p<\infty$ (recall (\ref{sing_Rad_dfn})), and we apply (\ref{V_term}) to the second term on the right-hand side.
Finally, the strong $L^p$ boundedness of $f \mapsto \sup_{t>0}|\Tcal_t f|$ follows after invoking the Marcinkiewicz interpolation theorem with any exponents $p_1,p_2$ satisfying $1<p_1<p<p_2<\infty$.  
 
\end{remark}

\section{Step 2: Passage to a square function}\label{sec_square}
In this section, we show that the key hypothesis (\ref{IP_ineq}) of Theorem \ref{thm_I_to_R}  can be verified if a related square function has an $L^2$ bound with decay in the norm $\|\lam\|$ defined in (\ref{lam_norm_iso}). We only require the case that $\ga(y) = (y,Q(y))$, but the reader will note that the argument in this section may be adapted to the case where
 $\ga(y)$ is a hypersurface in $\R^{n+1}$, of the form 
\beq\label{dq}
\ga(y)= (y,q(y)) \qquad \text{for a homogeneous polynomial $q$ of degree $d_q \geq 1$.}
\eeq
We now denote $f$ as a function of $(x,t) \in \R^{n+1}$, so that along the hypersurface $\ga$, we study the function $f(x-y,t-Q(y))$.
We continue to assume homogeneous polynomials $p_1,\ldots, p_M$ have been fixed as in (\ref{P_span}), but do not yet need further assumptions about them.

\subsection{A flat analogue: the $J$-operator}
The square function compares the $I$-operator defined in (\ref{I_dfn}) to  a smoother ``flat'' analogue (with a compatible nonisotropic scaling), defined as follows. 
Fix a $C^1$ function $\zeta$ with $\|\zeta\|_{C^1} \leq 1$, and with $\zeta$ supported on $B_1( \R)$. 
 Then for any $P_\lam \in \Pcal=\Span\{p_1,\ldots, p_M\}$, any $C^1$ bump function $\eta$ supported in the unit ball, and any $a>0$, define the  operator, acting on a Schwartz function $f$, by
 \beq\label{J_dfn}
 ^{(\eta)}J_a^\lam f(x,t) = \int_{\R^{n+1}} f(x-y,t-s) e^{i P_\lam(y/a)} \frac{1}{a^n} \eta( \frac{y}{a}) \frac{1}{a^{2}} \zeta(\frac{s}{a^{2}}) dy ds.
 \eeq
The comparison to the operator $^{(\eta)}I_a^\lam$ is clear if we write  
 \[ ^{(\eta)}I_a^\lam f(x,t) = \int_{\R^{n+1}} f(x-y,t-s) e^{i P_\lam(y/a)} \frac{1}{a^n} \eta( \frac{y}{a}) \del_{s=Q(y)}  dy ds.\]
 From now on, we will study the $J$-operator defined with a  choice of bump function $\zeta$ such that
 \beq\label{zeta_integral}
 \int_\R \zeta(s) ds=1.
 \eeq
 This is useful, because if we temporarily let $I_a^\lam(y,s)$ and $J_a^\lam(y,s)$ represent the kernels of the operators, respectively, then for each fixed $\lam \in \R^M$, $a>0$ and $y \in \R^n$, we have
 \[ \int_{\R}(I_a^\lam(y,s) - J_a^\lam(y,s))ds=0.\]
(This will be used in (\ref{I_uses_zero}) below, to introduce a derivative to a Littlewood-Paley projection, enabling the application of Lemma \ref{lemma_uDel} (iii); this ultimately leads to (\ref{IJ_cases_IJ}).)

\subsection{Littlewood-Paley decomposition}
In order to introduce the square function built from differences of the $I$-operator (\ref{I_dfn}) and the $J$-operator (\ref{J_dfn}), we will employ a Littlewood-Paley decomposition constructed in \cite{PieYun19}; we summarize its properties, all of which are proved explicitly in \cite[\S 4]{PieYun19}.
There exists a family of Schwartz functions on $\R$, denoted $\Delta_j(t)$  for $j \in \Z$,   with the following properties: $(\Del_j)\hat{\;}(t)$ is supported in an annulus where $|t| \approx 2^{-2j}$, 
\beq\label{Delta_cancel}
\int_\R \Del_j(t)dt=0, \qquad \text{for all $j$,}
\eeq
and $\sum_j (\Del_j)\hat{\;}(t)=1$ for all $t \neq 0$.
There is a second family of Schwartz functions on $\R$, denoted  $\tilde{\Delta}_j$, with $(\tilde{\Del}_j)\hat{\;}(t)$ supported on a slightly wider annulus $|t| \approx 2^{-2j}$ and with $(\tilde{\Del}_j)\hat{\;}(t)\con 1$ on the support of $(\Del_j)\hat{\;}(t)$ for each $j$.  This family of Schwartz functions satisfies the analogue of (\ref{Delta_cancel}) and $\sum_j (\Del_j)\hat{\;}(t)=1$ for all $t \neq 0$.
Define the operator, also denoted $\Del_j$, acting on any Schwartz function $f$ on $\R^{n+1}$ by
\[ \Del_j f(x,t) = \int_{\R^n} f(x,t-s)\Del_j(s)ds,\]
and define the operator $\tilde{\Del}_j$ analogously.

For $P_j=\Del_j$  or $P_j = \Del_j \tilde{\Del}_j$, for every $f \in L^2$,
\[ f = \sum_{j=-\infty}^\infty P_j f, \]
with convergence of the partial sums holding in the $L^2$ sense, and 
\beq\label{Del_sum_f}
\| (\sum_{j=-\infty}^\infty |P_jf|^2)^{1/2}\|_{L^2} \ll \|f\|_{L^2}.
\eeq
Consequently, if we define the operator 
\[ L_Nf = \sum_{|j| \leq N} \Del_j \tilde{\Del}_j f,\]
acting on $f$ of Schwartz class, then $L_N f$ is a Schwartz function, and $L_N f$ converges to $f$ in $L^2$ norm as $N \maps \infty$. This concludes the summary of the properties cited from \cite[\S 4]{PieYun19}.

We now fix $N$, so that all sums over $j$ below are finite. Since the estimates below are independent of $N$, we can take $N \maps \infty$ at the end of the argument. In particular, 
\[\|\sup_{\bstack{r \leq \|\lam\|< 2r}{k \in \Z}} | ^{(\eta_k)}I_{2^k}^\lam f|\; \|_{L^2}  
\leq \|\sup_{\bstack{r \leq \|\lam\|< 2r}{k \in \Z}}| ^{(\eta_k)}I_{2^k}^\lam L_N f|\; \|_{L^2}   + \| \sup_{\bstack{r \leq \|\lam\|< 2r}{k \in \Z}}|^{(\eta_k)}I_{2^k}^\lam (f-L_Nf)|\;\|_{L^2}  .
\]
By applying the trivial $L^2$ bound of (\ref{I_trivial_p}) to the last term, and the fact that $L_N f$ converges to $f$ in $L^2$ norm, we see that if we can prove
\beq\label{IP_ineq_LN}
 \|\sup_{\bstack{r \leq \|\lam\|< 2r}{k \in \Z}}| ^{(\eta_k)}I_{2^k}^\lam L_N f|\; \|_{L^2} \ll r^{-\del}\|f\|_{L^2},
\eeq
uniformly in $N$, 
then the key hypothesis (\ref{IP_ineq}) of Theorem \ref{thm_I_to_R} immediately follows. Thus our main goal is to prove (\ref{IP_ineq_LN}), for all $r \geq 1$, uniformly in $N$.

\subsection{The square function}
Fix a family $\{\eta_k \}_{k \in \Z}$ of $C^1$ bump functions supported in the unit ball $B_1(\R^n)$ with $C^1$ norm uniformly bounded by $1$. The key to proving (\ref{IP_ineq_LN}) is a square function, defined  each $r \geq 1$, for $f$ of Schwartz class,  by
\[ S_r(f)  = ( \sum_{k \in \Z} ( \sup_{r \leq \|\lam\|< 2r} | ( ^{(\eta_k)}I_{2^k}^\lam - ^{(\eta_k)}J_{2^k}^\lam) L_N f| )^2 )^{1/2}.\]
We aim to prove that there exists $\del>0$ such that for any Schwartz function $f$ on $\R^{n+1}$, and any $r \geq 1$,
\beq\label{S_ineq_0}
\| S_r(f)\|_{L^2(\R^{n+1})} \ll r^{-\del} \|f\|_{L^2(\R^{n+1})},
\eeq
uniformly in $N$. Let us see why this implies (\ref{IP_ineq_LN}), and hence
  (\ref{IP_ineq}).
First note that for any $r \geq 1$, 
\beq\label{IJS}
\sup_{\bstack{r \leq \|\lam\|< 2r}{k \in \Z}}| ^{(\eta_k)}I_{2^k}^\lam L_N f| \leq \sup_{\bstack{r \leq \|\lam\|< 2r}{k \in \Z}}| ^{(\eta_k)}J_{2^k}^\lam L_N f| + S_r(f).
\eeq
Thus if we have proved (\ref{S_ineq_0}), to deduce (\ref{IP_ineq_LN}) it suffices to bound the first term on $L^2$ with decay in $r$. For this we cite the following,  an immediate consequence of the work of Stein and Wainger in \cite[Thm. 1]{SWCarl}:
 \begin{thm}\label{thm_main_J}
 Let $p_1(y),\ldots, p_M(y)$ be a fixed set of linearly independent, homogeneous polynomials in $\R[X_1,\ldots,X_n]$, each of degree at least 2.  
For any family $\{\eta_k \}_{k \in \Z}$ of $C^1$ bump functions supported in the unit ball $B_1(\R^n)$ with $C^1$ norm uniformly bounded by $1$, there exists a fixed $\del>0$ such that for any Schwartz function $f$ on $\R^{n+1}$ and any $r \geq 1$, 
\beq\label{JP_ineq}
\| \sup_{\bstack{\lam \in \R^M}{r \leq \| \lambda \| < 2r}} \sup_{k \in \Z} | ^{(\eta_k)}J_{2^k}^\lam f(x,t)| \|_{L^2(\R^{n+1})} \leq
	A r^{-\del} \|f\|_{L^2(\R^{n+1})},
\eeq
in which the norm $A$ may depend on $p, n,$ and the fixed polynomials $p_1,\ldots,p_M$.
 \end{thm}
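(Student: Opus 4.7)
The plan is to reduce to an oscillatory-integral estimate on the Fourier side and then invoke the Stein--Wainger van der Corput machinery from \cite{SWCarl}. Taking the Fourier transform in $(x,t) \in \R^{n+1}$ and changing variables $u = y/2^k$, the operator ${}^{(\eta_k)}J_{2^k}^\lam$ becomes multiplication by the symbol
\[
m_k^\lam(\xi,\tau) \;=\; \hat{\zeta}(2^{2k}\tau)\cdot \Phi_k(2^k\xi,\lam), \qquad \Phi_k(\xi',\lam) := \int_{\R^n} e^{-iu\cdot\xi' + iP_\lam(u)}\eta_k(u)\,du.
\]
Since $|\hat{\zeta}(2^{2k}\tau)| \leq \|\zeta\|_{L^1} \ll 1$ uniformly in $(k,\tau)$, the $t$-direction smoothing contributes only a bounded factor, and the whole problem reduces to controlling $\Phi_k$ in $L^\infty(\xi')$ with decay in $\|\lam\|$.

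The crux is then to show that for some fixed $\del = \del(n,M,d_1,\ldots,d_M) > 0$ and all $r \geq 1$,
\[
\sup_{k \in \Z}\ \sup_{r\leq\|\lam\|<2r}\ \sup_{\xi' \in \R^n}|\Phi_k(\xi',\lam)| \;\ll\; r^{-\del},
\]
uniformly across the family $\{\eta_k\}_k$ of $C^1$ bumps with $\|\eta_k\|_{C^1}\leq 1$ supported in $B_1(\R^n)$. Since every $p_j$ is homogeneous of degree $d_j \geq 2$, the phase $-u\cdot\xi' + P_\lam(u)$ contains no linear-in-$u$ contribution from $P_\lam$, so the linear part is entirely encoded in $\xi'$ and cannot conspire with $P_\lam$ to cancel oscillation. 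This is precisely the configuration to which the van der Corput / stationary-phase estimates of Stein and Wainger apply; combined with Plancherel in $(x,t)$, it yields the single-$(k,\lam)$ $L^2$ bound $\|{}^{(\eta_k)}J_{2^k}^\lam f\|_{L^2} \ll r^{-\del}\|f\|_{L^2}$.

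To upgrade this to the full double supremum in (\ref{JP_ineq}), I would linearize via measurable stopping-time functions $k(x,t) \in \Z$ and $\lam(x,t) \in \{r\leq \|\lam\|<2r\}$, run a standard $TT^*$ argument, and interpolate with the trivial $L^\infty$ bound on each ${}^{(\eta_k)}J_{2^k}^\lam$ to absorb polynomial-in-$r$ losses incurred when passing from individual symbol bounds to the supremum. The main obstacle to watch is the bookkeeping of these losses: the supremum over $k$ must be controlled by a maximal-function / square-function reduction comparing adjacent scales, while the supremum over $\lam$ in the annulus $\{r\leq\|\lam\|<2r\}$ can be handled by a Sobolev embedding in the finite-dimensional parameter space $\R^M$ at the cost of a fixed polynomial factor in $r$. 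After interpolation with the trivial bound, the single-scale decay $r^{-\del}$ survives as a (smaller) net polynomial decay. All of these maneuvers are carried out in essentially the required form in \cite[Thm.~1]{SWCarl}; the bounded multiplier $\hat{\zeta}(2^{2k}\tau)$ rides along harmlessly throughout.
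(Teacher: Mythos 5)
Your single-$(k,\lambda)$ estimate is fine: after Plancherel the symbol factors as you say, $|\hat{\zeta}(2^{2k}\tau)|$ is uniformly bounded, and since the $p_j$ are homogeneous of degree $\geq 2$ and linearly independent, the coefficient norm of the phase $-u\cdot\xi'+P_\lam(u)$ is $\gtrsim \|\lam\|$ regardless of $\xi'$, so Lemma \ref{lemma_VdC_int} gives $\sup_{\xi'}|\Phi_k(\xi',\lam)|\ll r^{-1/d}$ and hence $\|{}^{(\eta_k)}J_{2^k}^\lam f\|_{L^2}\ll r^{-\del}\|f\|_{L^2}$ for each fixed $k,\lam$. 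But that bound is not where the difficulty lies; the theorem is entirely about the double supremum, and your proposed upgrade does not work as described. A Sobolev embedding in the parameter $\lam$ over the annulus $\{r\leq\|\lam\|<2r\}$ costs a factor that grows polynomially in $r$ (the $\lam$-gradient of the operator carries no decay, and the annulus has volume $\sim r^M$), while the single-parameter gain is only $r^{-1/d}$. The net bound therefore \emph{grows} in $r$, and no interpolation with a trivial $L^p$ bound can convert an $L^2\to L^2$ operator norm that grows in $r$ into one that decays: interpolation moves between exponents $p$, not between values of $r$ at fixed $p=2$. The same objection applies to "absorbing" the loss from the supremum over the infinitely many scales $k$.

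The mechanism that actually closes this gap — and the one the paper relies on, via \cite[Thm.\ 1]{SWCarl} and \cite[Thm.\ 6.1]{PieYun19} — is to linearize both suprema with measurable stopping times $k(x,t)$, $\lam(x,t)$ and bound $TT^*$ by proving the pointwise kernel estimate of Proposition \ref{prop_K_flat}: $|K_\flat^{\nu,\mu}(u)|\ll r^{-\del}\chi_{B_2}(u)+\chi_{G^\nu}(u)$ with $|G^\nu|\ll r^{-\del}$. The subtle and essential point, which your sketch does not engage with, is that the exceptional set $G^\nu$ must be independent of the second stopping time $\mu=\lam(x-u,t-\theta)$, which varies with the integration variable; only then can the contribution of $G^\nu$ be controlled by an averaging operator (Lemma \ref{lemma_average}). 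You do name the stopping-time $TT^*$ route in your last sentence and defer to \cite{SWCarl}, which is legitimate (the paper does the same), but the concrete quantitative scheme you propose in its place would fail, so as written the proof has a genuine gap at its main step.
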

 Applying this theorem to the first term in (\ref{IJS}), and (\ref{S_ineq_0}) to the second term, then implies (\ref{IP_ineq_LN}) and hence (\ref{IP_ineq}).

 Theorem \ref{thm_main_J} follows from Stein and Wainger's original work with only a few minimal observations. The case  we apply to prove Theorem \ref{thm_main_R} is explicitly proved in \cite[Thm. 6.1]{PieYun19}, so we do not repeat the proof. (The same proof works verbatim for any $d_q \geq 1$ in the notation of (\ref{dq}).) 
 We will, however, explicitly state   an intermediate step from Stein and Wainger's proof,   because we will apply it again in \S \ref{sec_TT_to_K} (see (\ref{avg_op_2})). We recall from \cite{SWCarl} that their
  main strategy to prove Theorem \ref{thm_main_J} is to linearize and consider an operator $Tf(x,t)= {^{(\eta_k)}J}_{2^k}^\lam f(x,t)$ for measurable stopping-time functions $k(x,t): \R^{n+1} \maps \Z$ and $\lam(x,t): \R^{n+1} \maps \R^M$. Then the key is to prove that  $TT^*$ is bounded on $L^2$ with a norm that decays in $r$, as long as $\lam(x,t)$ takes values with $\|\lam\| \approx r.$ To prove this, Stein and Wainger compute the kernel of $TT^*$, and prove that it is compactly supported and has an upper bound that exhibits decay in $r$ (in an appropriate sense). 
  
  More precisely, the kernel of $TT^*$ takes the following form. Fix homogeneous polynomials $p_1,\ldots, p_M$  in $\R[X_1,\ldots,X_n]$.
  Let $\Psi(u,z)$ be a $C^1$ bump function supported in $B_2 \times B_1 \subset \R^n \times \R^n$, and define
  \[ \| \Psi \|_{C^1} := \sup_{(u,z) \in B_2(\R^n) \times B_1(\R^n)} (| \Psi(u,z)| + |\nabla_z\Psi(u,z)|).\]
  Define for stopping-times $\nu,\mu$ taking values in $\R^M$ the function
  \[ K_\flat^{\nu,\mu} (u) = \int_{\R^n} e^{i P_\nu(u+z) - iP_\mu(z)}\Psi(u,z) dz,\]
  where $P_\nu(y) = \sum_{j=1}^M \nu_j p_j(y)$ and $P_\mu(y) = \sum_{j=1}^M \mu_j p_j(y)$. 
  Trivially, $|K_\flat^{\nu,\mu}(u)| \ll \chi_{B_2}(u)\|\Psi\|_{C^1}$.
The following nontrivial bound holds: 
 
 \begin{prop}\label{prop_K_flat}
  Let $p_1(y),\ldots, p_M(y)$ be a fixed set of linearly independent, homogeneous polynomials in $\R[X_1,\ldots,X_n]$, each of degree at least 2.  
  Let $\Psi(u,z)$ be a $C^1$ bump function supported in $B_2 \times B_1 \subset \R^n \times \R^n$ with 
  $\|\Psi\|_{C^1} \leq 1$.
There exists a fixed $\del>0$ such that for any $r \geq 1$,  if $\nu,\mu$ satisfy 
\[ r \leq \| \nu\|,\|\mu\| \leq 2r\]
then there exists a measurable set $G^\nu \subset B_2(\R^n)$ (which depends only on $\nu$, and not on $\mu$ or $\Psi$), such that $|G^\nu| \ll r^{-\del}$ and 
\[ |K_\flat^{\nu,\mu}(u)| \ll r^{-\del} \chi_{B_2}(u) + \chi_{G^\nu}(u).\]
All implied constants are dependent only on $n$ and the fixed polynomials $p_1,\ldots,p_M$, and are independent of $\nu,\mu, \Psi$.
 \end{prop}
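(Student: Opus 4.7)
The plan is to apply a Stein--Wainger polynomial sublevel-set estimate to the oscillatory integral defining $K_\flat^{\nu,\mu}$. Viewing the phase
\[ \Phi(u,z) := P_\nu(u+z) - P_\mu(z) = \sum_{|\alpha|\le d}\xi_\alpha(u,\nu,\mu)z^\alpha \]
as a polynomial in $z$ of degree $d := \max_j d_j$, a direct expansion gives
\[ \xi_\alpha(u,\nu,\mu) = \sum_{j:\,d_j>|\alpha|}\nu_j\rho_j^{(\alpha)}(u) + \sum_{j:\,d_j=|\alpha|}(\nu_j-\mu_j)[z^\alpha]p_j, \]
where $\rho_j^{(\alpha)}(u) := [z^\alpha]p_j(u+z)$ is a polynomial in $u$ of degree $d_j-|\alpha|$ that vanishes at $u=0$ whenever $d_j>|\alpha|$. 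The Stein--Wainger sublevel-set lemma then produces a constant $\delta_0 = \delta_0(n,d) > 0$ such that
\[ |K_\flat^{\nu,\mu}(u)| \ll \|\Psi\|_{C^1}\Big(\sum_{|\alpha|\ge 2}|\xi_\alpha(u,\nu,\mu)|\Big)^{-\delta_0}, \]
so the task reduces to exhibiting a set $G^\nu\subset B_2$ depending only on $\nu$, with $|G^\nu|\ll r^{-\delta}$, on whose complement this coefficient sum exceeds $r^c$ for some fixed $c>0$, uniformly in admissible $\mu$.

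The central structural point is that all $\mu$-dependence in $\xi_\alpha$ is confined to its constant-in-$u$ term, while the strictly positive-degree-in-$u$ part is determined by $\nu$ alone. I would argue by downward induction on $|\alpha|$ from $d$ down to $2$: at each step, either $\sum_{d_j=|\alpha|}|\nu_j-\mu_j|\gtrsim r$, so that the $z^\alpha$-coefficient itself is already of size $\gtrsim r$ uniformly in $u$ (and one may take $G^\nu=\emptyset$); or $(\mu_j)_{d_j=|\alpha|}$ is forced close to $(\nu_j)_{d_j=|\alpha|}$, keeping $\|\nu\|\sim r$ while pushing the remaining mass to strictly lower degrees. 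Because $p_1,\ldots,p_M$ are linearly independent, the induction must terminate with a polynomial $R(u;\nu)$ of positive degree in $u$, of total degree at most $d$, and with $L^\infty(B_2)$-norm of order $r$, extracted as an appropriate linear combination of the $\nu$-sums $\sum_{d_j>|\alpha|}\nu_j\rho_j^{(\alpha)}(u)$.

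The bad set is then $G^\nu := \{u\in B_2:|R(u;\nu)|<\eta r\}$, and a standard polynomial sublevel-set inequality on $B_2 \subset \R^n$ yields $|G^\nu|\ll \eta^{1/d}$, which is $\ll r^{-\delta}$ after choosing $\eta = r^{-\delta d}$. The chief obstacle is ensuring the $\mu$-independence of $G^\nu$: the $\mu$-shifts entering the $\xi_\alpha$'s can be of order $r$, comparable to the $\nu$-contribution, so $R$ must be assembled from several coefficients simultaneously in such a way that the resulting vector of $\mu$-shifts lies in a controlled subspace that cannot absorb the oscillation of $R$ off $G^\nu$. This careful bookkeeping of which index pairs $(|\alpha|,j)$ admit $\mu$-contributions versus which provide pure $\nu$-driven $u$-oscillation is precisely the content of the Stein--Wainger argument, adapted to the linearly independent polynomial family $\{p_j\}$ fixed in our setting.
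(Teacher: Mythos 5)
First, a remark on the target: the paper does not prove this proposition at all — it is quoted as a reformulation of \cite[Lemma 4.1]{SWCarl} as stated in \cite[Prop. 3.4]{PieYun19} — so your proposal is really being measured against the Stein--Wainger argument. Your setup is correct (expand the phase in $z$, note that the $\mu$-dependence of each coefficient $\xi_\alpha$ lives only in its constant-in-$u$ part, use Lemma \ref{lemma_VdC_int} when a coefficient is large and Lemma \ref{lemma_VdC_set} to control the exceptional set), but the heart of the proof — actually producing a $\mu$-independent polynomial $R(u;\nu)$ with $\llbracket R\rrbracket_u\gg r$ — is not carried out. The downward induction you sketch does not deliver it: for $|\alpha|<d$ the coefficient $\xi_\alpha(u)$ is the sum of the constant $\sum_{d_j=|\alpha|}(\nu_j-\mu_j)[z^\alpha]p_j$ and the $u$-dependent $\sum_{d_j>|\alpha|}\nu_j\rho_j^{(\alpha)}(u)$, and for a fixed $u$ these can cancel even when $\sum_{d_j=|\alpha|}|\nu_j-\mu_j|\gtrsim r$, so the dichotomy ``either the coefficient is large uniformly in $u$ or $\mu$ is forced close to $\nu$'' fails below the top degree. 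You yourself flag this as the ``chief obstacle'' and then defer it to the cited argument, which leaves the proof incomplete.

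The irony is that restricting the van der Corput application to $|\alpha|\ge 2$ discards exactly the coefficients that close the argument in one step. Since every $p_j$ has degree at least $2$, $P_\mu$ contributes nothing to the linear-in-$z$ coefficients, so for $|\alpha|=1$ the coefficient of $z^\alpha$ in $P_\nu(u+z)-P_\mu(z)$ is $\partial^\alpha P_\nu(u)$, with no $\mu$-dependence whatsoever. The linear map $\nu\mapsto\nabla P_\nu$ is injective (if $\nabla P_\nu\equiv0$ then $P_\nu$ is constant, hence zero by homogeneity of degree $\ge 2$, hence $\nu=0$ by linear independence), so $\sum_i\llbracket\partial_iP_\nu\rrbracket\gg\|\nu\|\approx r$. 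Taking $G^\nu=\{u\in B_2:|\partial_{i_0}P_\nu(u)|\le r^{\epsilon}\}$ for an index $i_0$ with $\llbracket\partial_{i_0}P_\nu\rrbracket\gg r$, Lemma \ref{lemma_VdC_set} gives $|G^\nu|\ll r^{-(1-\epsilon)/d}$, and for $u\notin G^\nu$ Lemma \ref{lemma_VdC_int} gives $|K_\flat^{\nu,\mu}(u)|\ll r^{-\epsilon/d}$. No induction or bookkeeping over higher-degree coefficients is needed, and this is precisely why the hypothesis that no $p_j$ is linear is indispensable.
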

 This is simply a reformulation of \cite[Lemma 4.1]{SWCarl} as  stated in \cite[Prop. 3.4]{PieYun19}. 
 It is crucial in Stein and Wainger's proof by the $TT^*$ method that none of the polynomials $p_j$ contains a linear term, and this restriction thus applies in Proposition \ref{prop_K_flat} as well as in our main theorem.

\subsection{Strategy to prove the square function estimate}
We have established that to prove (\ref{IP_ineq_LN}), the key is to prove (\ref{S_ineq_0}). 
As a final result of this section, we record the strategy to prove the square function estimate (\ref{S_ineq_0}):
\begin{thm}\label{thm_S_to_IJ}
Suppose that for any family $\{\eta_k \}_{k \in \Z}$ of $C^1$ bump functions supported in the unit ball $B_1(\R^n)$ with $\|\eta_k\|_{C^1} \leq 1$, there exists a fixed $\del_0>0$ and $\ep_0>0$ such that for any Schwartz function $F$ on $\R^{n+1}$ and any $r \geq 1$,
\beq\label{S_ineq_prequel}
\| \sup_{r \leq \|\lam\|< 2r} | ( ^{(\eta_k)}I_{2^k}^\lam - ^{(\eta_k)}J_{2^k}^\lam) \Del_j F| \; \|_{L^2} \ll r^{-\del_0} 2^{-\ep_0|j-k|}\|F\|_{L^2}, \qquad \text{for all $j,k \in \Z$}.
\eeq
Then for any such family $\{\eta_k\}$, there exists $\del>0$ such that for any Schwartz function $f$ on $\R^{n+1}$, and any $r \geq 1$,
\beq\label{S_ineq}
\| S_r(f)\|_{L^2(\R^{n+1})} \ll r^{-\del} \|f\|_{L^2(\R^{n+1})},
\eeq
uniformly in $N$, 
and hence (\ref{IP_ineq}) holds.
\end{thm}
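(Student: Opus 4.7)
The plan is to expand $L_N f$ via the Littlewood--Paley decomposition, push the supremum over $\lam$ inside the sum in $j$, and then apply a weighted Cauchy--Schwarz argument that exploits the off-diagonal decay $2^{-\ep_0|j-k|}$ supplied by the hypothesis. Writing $L_N f = \sum_{|j| \leq N} \Del_j \tilde\Del_j f$ and setting
\[
A_{j,k} := \sup_{r \leq \|\lam\| < 2r} \bigl|({^{(\eta_k)}I}_{2^k}^\lam - {^{(\eta_k)}J}_{2^k}^\lam)\, \Del_j \tilde\Del_j f\bigr|,
\]
the triangle inequality (which commutes past the supremum in $\lam$) gives $\sup_{r \leq \|\lam\| < 2r} |({^{(\eta_k)}I}_{2^k}^\lam - {^{(\eta_k)}J}_{2^k}^\lam) L_N f| \leq \sum_{|j| \leq N} A_{j,k}$. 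Applying the hypothesis (\ref{S_ineq_prequel}) with $F = \tilde\Del_j f$ yields $\|A_{j,k}\|_{L^2} \ll r^{-\del_0} 2^{-\ep_0 |j-k|} \|\tilde\Del_j f\|_{L^2}$, with implied constant independent of $N$, $j$, $k$.

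The central estimate is Cauchy--Schwarz in $j$ with the summable dyadic weight $w_j = 2^{-\ep_0|j-k|/2}$:
\[
\Bigl(\sum_j A_{j,k}\Bigr)^2 \leq \Bigl(\sum_j 2^{-\ep_0|j-k|/2}\Bigr)\Bigl(\sum_j 2^{\ep_0|j-k|/2} A_{j,k}^2\Bigr) \ll \sum_j 2^{\ep_0|j-k|/2} A_{j,k}^2.
\]
Integrating in $(x,t)$, summing over $k$, and swapping the order of summation gives
\[
\|S_r(f)\|_{L^2}^2 \ll r^{-2\del_0} \sum_j \|\tilde\Del_j f\|_{L^2}^2 \sum_k 2^{-3\ep_0|j-k|/2} \ll r^{-2\del_0} \|f\|_{L^2}^2,
\]
where the inner geometric sum over $k$ is $O(1)$ uniformly in $j$, and the vector-valued Littlewood--Paley bound (\ref{Del_sum_f}) applied to $\tilde\Del_j$ controls the outer sum by $\|f\|_{L^2}^2$. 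All constants are independent of $N$, so (\ref{S_ineq}) holds with $\del = \del_0$.

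The passage from (\ref{S_ineq}) to (\ref{IP_ineq}) is already outlined in the text preceding the theorem: the splitting (\ref{IJS}) dominates $\sup_{\lam,k}|{^{(\eta_k)}I}_{2^k}^\lam L_N f|$ by $\sup_{\lam,k}|{^{(\eta_k)}J}_{2^k}^\lam L_N f| + S_r(f)$; the $J$-piece is bounded by Theorem \ref{thm_main_J} (valid since the $p_m$ in the statement of Theorem \ref{thm_main_R} are homogeneous of degree $\geq 2$), and the square function piece by (\ref{S_ineq}). Sending $N \to \infty$ using $L_N f \to f$ in $L^2$ together with the trivial bound (\ref{I_trivial_p}) produces (\ref{IP_ineq_LN}), and hence (\ref{IP_ineq}). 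The only delicate point in the entire argument is the choice of Cauchy--Schwarz weight --- a trivial split would lose control of the number of dyadic scales as $N \to \infty$, whereas the weight $2^{-\ep_0|j-k|/2}$ is exactly enough to absorb half of the off-diagonal decay and leave $2^{-3\ep_0|j-k|/2}$ free to sum over $k$. There is no serious obstacle here; the real technical work of the paper lies in verifying the hypothesis (\ref{S_ineq_prequel}), which is the business of Steps~3--5.
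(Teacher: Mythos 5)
Your proof is correct and follows essentially the same route as the paper: expand $L_Nf$ into the Littlewood--Paley pieces, apply Cauchy--Schwarz in $j$ with a weight that sacrifices only part of the off-diagonal decay $2^{-\ep_0|j-k|}$ (your weight $2^{-\ep_0|j-k|/2}$ is the paper's choice with $\ep_1=\ep_0/4$), sum the remaining geometric factor in $k$, and invoke (\ref{Del_sum_f}) for the $\tilde\Del_j$ pieces, with the passage to (\ref{IP_ineq}) handled exactly as in the text via (\ref{IJS}) and Theorem \ref{thm_main_J}. No gaps.
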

 Indeed, we recall that 
\[ |S_r(f)|^2 \leq \sum_{k \in \Z} ( \sum_{|j| \leq N} \sup_{\|\lam\| \approx r} |( ^{(\eta_k)}I_{2^k}^\lam - ^{(\eta_k)}J_{2^k}^\lam) \Del_j \tilde{\Del}_j f|)^2.
\]
By inserting $2^{-\ep_1|j-k|}2^{+\ep_1|j-k|}$ for some $\ep_1<\ep_0$, and applying Cauchy-Schwarz to the sum over $j$, we see that 
\[ |S_r(f)|^2 \leq \sum_{k \in \Z} ( \sum_{|j| \leq N}2^{-2\ep_1|j-k|})( \sum_{|j| \leq N}2^{2\ep_1|j-k|}\sup_{\|\lam\| \approx r} |( ^{(\eta_k)}I_{2^k}^\lam - ^{(\eta_k)}J_{2^k}^\lam) \Del_j \tilde{\Del}_j f|^2).
\]
The first sum over $j$ is $\ll 1$ uniformly in $k$. Taking $L^2$ norms,  we apply the hypothesis (\ref{S_ineq_prequel}) to each $F=\tilde{\Del}_j f$ to see that 
\[ \| S_r(f)\|^2_{L^2} \ll r^{-2\del_0}\sum_{k \in \Z} \sum_{|j| \leq N} 2^{2\ep_1|j-k|}  2^{-2\ep_0|j-k|} \|\tilde{\Del}_jf\|^2_{L^2}.\]
We use the fact that $\ep_1<\ep_0$ to sum over $k$, and apply the property (\ref{Del_sum_f}) of the projections $\tilde{\Del}_j$ to conclude that $\|S_r(f)\|_{L^2}^2 \ll r^{-2\del_0} \|f\|_{L^2}^2$, and the theorem is proved. 

The rest of the paper will focus on proving the hypothesis (\ref{S_ineq_prequel}).
Note that we now need to prove (\ref{S_ineq_prequel}) for each fixed $k$ rather than for an operator involving a supremum over $k$; fixing $k$ is a key advantage we have gained by working with the square function. (The later utility of having $k$ be fixed is visible in (\ref{avg_op}), and subsequent similar places.)

\section{Step 3: Change of variables to isolate the kernel $K_{\sharp,l}^{\nu,\mu}$}\label{sec_change_var}

Our strategy to prove the main hypothesis (\ref{S_ineq_prequel}) in Theorem \ref{thm_S_to_IJ}
is to apply the $TT^*$ method to an operator $T$ that is (roughly) of the form 
\[f\mapsto \sup_{r \leq \|\lam\|< 2r} | ( ^{(\eta_k)}I_{2^k}^\lam - ^{(\eta_k)}J_{2^k}^\lam) \Del_j f|.\]
To do so precisely, we will linearize this operator by replacing the supremum over $\lam$ by a stopping-time function $\lam(x,t)$ taking values with $\|\lam(x,t)\| \in [r,2r)$, and then we will compute $TT^* f(x,t)$ for a Schwartz function $f$. Our goal is to write $TT^*f(x,t)$ as a convolution of $f$ with a kernel, say $K$, that is itself defined by an oscillatory integral.

\begin{remark}\label{remark_ngeq2}
Since $TT^*f(x,t)$ is an integral over $\R^{2n}$, and a convolution of $f(x,t)$ with a kernel is an integral over $n+1$ variables, at least formally  we would expect $K$ to be an integral over $n-1$ variables. This is ultimately why the present paper is limited to the consideration of $n \geq 2$.
\end{remark}

  From now on, the fact that we work with $\ga(y) = (y,Q(y))$ with $Q(y)$ quadratic becomes crucial to our method, and in particular we assume from now on that 
\beq\label{Q_dfn}
Q(y) = \sum_{i=1}^n \theta_i y_i^2, \qquad \theta_i \in \{\pm 1\}.\eeq
Recall from \S \ref{sec_prelim} that this suffices to prove Theorem \ref{thm_main_R}.

 At least intuitively, if $Tf$ integrates $f(x-y,t-Q(y))$ over $y \in \R^n$, then we expect $TT^*f(x,t)$ to integrate $f(x-(y-z),t-(Q(y) - Q(z)))$ over $y \in \R^n, z \in \R^n$. It is then natural to set $u=y-z$, so we face an integral of $f(x-u,t-(Q(u+z)-Q(z)))$ over $u \in \R^n, z \in \R^n$. To isolate the kernel $K$, we must ``free'' $n-1$ variables  so that only $n+1$ coordinates appear in the argument of $f$, that is, so that $Q(u+z)-Q(z)$ only involves $n+1$ coordinates. This motivates the change of variables we now describe, which crucially uses the quadratic behavior of $\ga(y)$. It is an interesting open problem to develop an argument for more general hypersurfaces.
 
For $Q$ as in (\ref{Q_dfn}), 
define for any $u\in\R^n$, that
\[ \tilde{u}=(\theta_1u_1,\ldots,\theta_n u_n),\]
where the signs $\theta_i$ are determined once and for all by (\ref{Q_dfn}).
This notation will be used extensively for the remainder of the paper.
Then define
\[\langle u,z\rangle=\sum_{i=1}^n\theta_iu_iz_i = \tilde{u}\cdot z.\]
Consequently,
\[ Q(u+z)-Q(z)=Q(u)+2|u|\frac{\langle u,z\rangle}{|u|}.\]
This motivates us to replace $z$ by defining new coordinates $\tau, \sig$, where   $\tau \in \R$, with $\tau =  \langle u,z\rangle/|u|$, and $\sig \in \R^{n-1}$. To define $\sig$ appropriately, we require a notational convention: for $z\in\R^n$ and $l\in\{1,\ldots,n\}$, define  
\[ z^{(l)} = (z_1,\ldots,\hat{z_l},\ldots,z_n)\in\R^{n-1}, \quad \text{that is, omitting the $l$-th coordinate.}\]

Now for a fixed $l\in \{1,\ldots,n\}$ we can define the change of variables $z\mapsto(\tau,\sigma) \in \R \times \R^{n-1}$ by
\begin{equation}\label{subdef} \tau=\frac{\langle u,z\rangle}{|u|} \quad\text{and}\quad\sigma=\frac{\tau\tilde{u}^{(l)}-|u|z^{(l)}}{\theta_l u_l}.\end{equation}
The formula for $z$ in terms of $(\tau,\sigma)$ is
\begin{equation}\label{zformula} z^{(l)}=\frac{\tau\tilde{u}^{(l)}-\theta_lu_l\sigma}{|u|}\quad\text{and}\quad z_l=\frac{\theta_lu_l\tau+\tilde{u}^{(l)}\cdot\sigma}{|u|}, \end{equation}
or equivalently,
\[ \begin{bmatrix}
z^{(l)}\\ z_l
\end{bmatrix}=\frac{1}{|u|}\begin{bmatrix}
-\theta_lu_l\text{Id}_{n-1}&\tilde{u}^{(l)}\\
(\tilde{u}^{(l)})^T&\theta_l u_l
\end{bmatrix}\begin{bmatrix}
\sigma\\ \tau
\end{bmatrix} .\]

It will be important in what follows that if $(u,z)$ lies in $B_2 \times B_1 \subset \R^n \times \R^n$, then $(\tau,\sig)  \in \R^n$ also lies in a compact region. To achieve this, we will use the fact that for any $u \in \R^n$, we can choose for which $l$ we perform the above change of variables. That is, we fix a small $c_0>0$ once and for all, and choose a partition of unity 
\beq\label{W_partition}
1 = \sum_{1 \leq l \leq n} W_l (s), \qquad s \in S^{n-1},
\eeq
where each $W_l \in C_c^\infty(S^{n-1})$ is supported where $|s_l|/|s| \geq c_0$. For $u$ such that $u/|u|$ lies in the support of $W_l$, it follows that $|u_l|/|u| \geq c_0$, and we can apply the change of variables (\ref{subdef}) for this choice $l$.

\subsection{Definition of $K_{\sharp,l}^{\nu,\mu}$ kernel}
Now, with this key change of variables in mind, we can define the oscillatory integral that is the heart of the matter. Let $\Psi(u,z)$ be a $C^1$ bump function supported in $B_2 \times B_1 \subset \R^n \times \R^n$.
Define,  for $u \in \R^n$ and for $1 \leq l \leq n$ such that $u/|u|$ lies in the support of $W_l$, the oscillatory integral
\beq\label{K_sharp_dfn_Psi} K_{\sharp,l}^{\nu,\mu}(u,\tau)=\int_{\R^{n-1}}e^{iP_\nu(u+z)-iP_\mu(z)}\Psi(u,z)d\sigma,
\eeq
where $z$ is defined implicitly in terms of $u, \tau, \sig$ as in (\ref{subdef}). (For $u \in B_2$ such that $u/|u|$ lies outside the support of $W_l$, by convention we set $K_{\sharp,l}^{\nu,\mu}(u,\tau) \con 0$ for $\tau \in B_1$, so the upper bounds we prove below for $K_{\sharp,l}^{\nu,\mu}$ will hold for all $(u,\tau) \in B_2 \times B_1$.) Here $\nu,\mu$ are stopping-times taking values in $\R^M$, which will be described momentarily.
Given a function $\Psi(u,z)$ as above, we   use the notation
\beq\label{Psi_norm}
\| \Psi\|_{C^1(\sig)} := \sup_{(u,z) \in B_2(\R^n) \times B_1(\R^n)} ( |\Psi(u,z)| + |\nabla_\sig \Psi(u,z)|),
\eeq
which is well-defined, for $z$ defined in terms of $u,\tau,\sig$ by (\ref{subdef}).

We now check that if $u, z$ are each compactly supported in $B_2 \times B_1 \subset \R^n \times \R^n$ and $u/|u|$ lies in the support of $W_l$, then  $K_{\sharp,l}^{\nu,\mu}(u,\tau)$ is compactly supported with respect to $(u,\tau) \in \R^n \times \R$, and the region of integration over $\sig \in \R^{n-1}$ in (\ref{K_sharp_dfn_Psi}) is compactly supported.
To see this, note that
\begin{align*}
    |z|^2 &= |z^{(l)}|^2 +z_l^2 
     = |u|^{-2}[|\tau\tilde{u}^{(l)}-\theta_lu_l\sigma|^2 +|\theta_lu_l\tau+\tilde{u}^{(l)}\cdot\sigma|^2]\\
   & = |u|^{-2} [\sum_{i \neq l} (\tau \theta_i u_i - \theta_l u_l \sig_i)^2 + (\tau \theta_l u_l + \tilde{u}^{(l)}\cdot\sigma)^2]\\
   & = |u|^{-2} [\sum_{i \neq l} (\tau^2  u_i^2 - 2\tau \theta_i u_i \theta_l u_l \sig_i+  u_l^2 \sig_i^2) + \tau^2 u_l^2 + 2 \tau \theta_l u_l (\tilde{u}^{(l)}\cdot\sigma) + (\tilde{u}^{(l)}\cdot\sigma)^2].
\end{align*}
We conclude that
\[ |z|^2 = \tau^2 + (u_l/|u|)^2|\sig|^2 + |u|^{-2}(\tilde{u}^{(l)}\cdot\sigma)^2.\]
Since all terms in this relation are non-negative, it follows that $\tau^2 \leq |z|^2 \leq 1$. We also see that 
\beq\label{sig_bound}
|\sig|^2 \leq (u_l/|u|)^{-2} \leq c_0^{-2},
\eeq
since $u/|u|$ is in the support of $W_l$. 
In particular, since $\Psi(u,z)$ has support in $B_2 \times B_1 \subset \R^n \times \R^n$, this proves the trivial upper bound 
\beq\label{K_sharp_trivial}
|K_{\sharp,l}^{\nu,\mu}(u,\tau)| \ll_{c_0} \chi_{B_2}(u)\chi_{B_1}(\tau).
\eeq

Once we prove a far stronger bound  for  $K_{\sharp,l}^{\nu,\mu}$, one that exhibits decay  in $r$ when $\|\mu\|,\|\nu\| \approx r$, then we can complete the proof of the key hypothesis of Theorem \ref{thm_S_to_IJ}, and hence complete the proof of the main theorem of this paper. This deduction is the content of the next section.

\section{Step 4: Reduction of the square function estimate to bounding $K_{\sharp,l}^{\nu,\mu}$} \label{sec_TT_to_K}
 
 The main result of this section is the following theorem:
\begin{thm}\label{thm_K_to_S}
Let $\Psi(u,z)$ be a $C^1$ function supported on $B_2 \times B_1 \subset \R^n \times \R^n$, with 
$\|\Psi\|_{C^1(\sig)} \leq 1$ as in (\ref{Psi_norm}).
For each $1 \leq l \leq n$, for $u\in \R^n$ such that $u/|u|$ lies in the support of $W_l$, consider the kernel $K_{\sharp,l}^{\nu,\mu}$ as defined in (\ref{K_sharp_dfn_Psi}).
Suppose that there exists a small $\del>0$ such that for all $r \geq 1$, if the stopping times $\nu,\mu$ satisfy 
\[ r \leq \|\nu\|,\|\mu\| \leq 2r,\]
then there exists a measurable set $G^\nu \subset B_2(\R^n)$ (depending on $\nu$ but  independent of $\mu,r,\Psi$), and for each $u \in B_2(\R^n)$ there exists a measurable set $F_u^\nu \subset B_1(\R)$ (depending on $u$ and $\nu$ but independent of $\mu, r, \Psi$), such that 
\beq\label{K_small_sets_prequel}
|G^\nu| \ll r^{-\del}, \qquad |F_u^\nu| \ll r^{-\del},
\eeq
and 
\beq\label{K_sharp_theorem_ineq_prequel}
|K_{\sharp,l}^{\nu,\mu}(u,\tau)| \ll r^{-\del} \chi_{B_2}(u) \chi_{B_1}(\tau) + \chi_{G^\nu}(u) \chi_{B_1}(\tau) + \chi_{B_2}(u)\chi_{F_u^\nu}(\tau),
\eeq
in which all implicit constants depend only on $n$ and the fixed polynomials $p_1,\ldots, p_M$. 
Then the hypothesis (\ref{S_ineq_prequel}) of Theorem \ref{thm_S_to_IJ} holds.
\end{thm}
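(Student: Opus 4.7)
The plan is to apply the $TT^*$ method to a linearized form of the operator in (\ref{S_ineq_prequel}). By measurable selection, replace the supremum by a stopping-time $\lambda\colon \R^{n+1} \to \R^M$ with $\|\lambda(x,t)\| \in [r,2r)$, and set
\[
Tf(x,t) := \bigl( {}^{(\eta_k)}I_{2^k}^{\lambda(x,t)} - {}^{(\eta_k)}J_{2^k}^{\lambda(x,t)}\bigr) \Delta_j f(x,t).
\]
It suffices to prove $\|T\|_{L^2\to L^2}^2 = \|TT^*\|_{L^2\to L^2} \ll r^{-2\delta_0}2^{-2\epsilon_0|j-k|}$. Computing $TT^*$ directly gives a kernel assembled from the four combinations of $I$ or $J$ in the two factors. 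Because $\int_\R \zeta(s)\,ds = 1$, the $s$-integral of each $I-J$ difference kernel vanishes; absorbing $\Delta_j = \Delta_j \tilde\Delta_j$ and pairing this cancellation with $\tilde\Delta_j$ (equivalently, Taylor expanding $\hat\zeta(2^{2k}\xi)$ about $\xi=0$ against $\widehat{\tilde\Delta}_j$, which is supported at $|\xi|\approx 2^{-2j}$) extracts the factor $2^{-\epsilon_0|j-k|}$ from the anisotropic mismatch between the kernel width $2^{2k}$ in the $t$-variable and the Littlewood-Paley frequency scale $2^{-2j}$.

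After this extraction and the change of variables $u = y-y'$, $t-t' \mapsto 2^{2k}\tau'$ (together with rescaling $u \mapsto 2^k u$), the remaining spatial kernel of $TT^*$ reduces to integrals of the form
\[
\int_{\R^n} e^{iP_\nu(u+z) - iP_\mu(z)}\,\tilde\Psi(u,z)\,dz,
\]
where $\nu = \lambda(x,t)$, $\mu$ is the shifted stopping-time $\lambda$ evaluated at $(x - 2^k u, t - \cdots)$, and $\tilde\Psi$ is a $C^1$ bump assembled from $\eta_k$, $\zeta$, and the Jacobians, with $\|\tilde\Psi\|_{C^1}$ uniformly controlled in $k,j,r$. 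Insert the partition of unity (\ref{W_partition}) in the direction $u/|u|$; on the $l$-th sector apply the change of variables (\ref{subdef}) from $z \in \R^n$ to $(\tau,\sigma) \in \R \times \R^{n-1}$. Since $Q(u+z) - Q(z) = Q(u) + 2|u|\tau$, the quadratic dependence on $\sigma$ disappears from the argument of $f$, and the $\sigma$-integration recognizes exactly the kernel $K^{\nu,\mu}_{\sharp,l}(u,\tau)$ from (\ref{K_sharp_dfn_Psi}). Hence $TT^*f(x,t)$ becomes a sum over $1\le l\le n$ of convolution-type integrals against these kernels.

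The concluding step inserts the hypothesized decomposition (\ref{K_sharp_theorem_ineq_prequel}) into three pieces. The uniformly small piece $r^{-\delta}\chi_{B_2}(u)\chi_{B_1}(\tau)$ contributes $\ll r^{-\delta}$ by Young's inequality, its kernel being compactly supported with $L^1$ norm $\ll r^{-\delta}$. For the piece $\chi_{G^\nu}(u)\chi_{B_1}(\tau)$, the essential structural fact is that $G^\nu$ depends only on $\nu = \lambda(x,t)$, independently of the integration variables and of $\mu$. Discarding the oscillatory phase and the $\mu$-dependence by the triangle inequality, the corresponding operator is dominated pointwise by $\chi_{G^{\lambda(x,t)}}(u)$ times a Radon-type averaging of $|f|$ along $\gamma$ of the kind recalled in (\ref{M_rad_gamma}); Cauchy--Schwarz against $|G^\nu|\ll r^{-\delta}$ yields $L^2$ norm $\ll r^{-\delta/2}$. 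The piece supported on $\chi_{F_u^\nu}(\tau)$ is handled symmetrically using $|F_u^\nu|\ll r^{-\delta}$ in the $\tau$-variable. Combining the three contributions gives $\|TT^*\|_{L^2\to L^2} \ll r^{-\delta/2}\cdot 2^{-2\epsilon_0|j-k|}$, which yields (\ref{S_ineq_prequel}) with $\delta_0 = \delta/4$ and establishes the hypothesis of Theorem \ref{thm_S_to_IJ}.

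The main obstacle is the book-keeping needed to simultaneously produce the Littlewood-Paley decay $2^{-\epsilon_0|j-k|}$ and preserve the $C^1(\sigma)$-normalization of $\tilde\Psi$ uniformly in all parameters: the derivative generated by the $\int\zeta=1$ cancellation must be routed onto a variable that does not inflate the $C^1$ norm entering the hypothesis (\ref{K_sharp_theorem_ineq_prequel}). A secondary challenge is the clean decoupling of $\mu$, which depends on the integration variables, from the bad sets $G^\nu$ and $F_u^\nu$, which by construction depend only on $\nu$; without this independence, the small-set contributions could not be absorbed into a single Radon-type maximal function estimate.
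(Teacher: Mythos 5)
Your proposal follows the paper's broad strategy (linearize, compute $TT^*$, change variables to expose $K_{\sharp,l}^{\nu,\mu}$, feed in the hypothesized three-piece bound via an averaging-operator lemma), but it has a genuine gap: the mechanism you use to produce the Littlewood--Paley decay $2^{-\epsilon_0|j-k|}$ only works in one direction. The cancellation $\int\zeta=1$, routed through the mean-value/Taylor estimate against $\widehat{\tilde\Delta}_j$ supported at $|\xi|\approx 2^{-2j}$, produces a factor of size $2^{2k}\cdot 2^{-2j}=2^{-2(j-k)}$. This is decay only when $j\geq k$; when $j<k$ it is exponential \emph{growth}, and the Taylor expansion of $\hat\zeta(2^{2k}\xi)$ about $\xi=0$ is useless since $2^{2k}\xi$ is large on the support of $\widehat{\tilde\Delta}_j$. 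The paper therefore splits into cases: for $j\geq k$ it runs essentially your argument (estimates (\ref{IJ_cases_IJ})), but for $j<k$ it abandons the $I-J$ cancellation entirely, bounds $I^\lambda_{2^k}\Delta_j$ and $J^\lambda_{2^k}\Delta_j$ separately, and for the $I$-piece obtains the factor $2^{2(j-k)}$ by writing $\underline{\Delta}_{j-k}$ as $2^{2(j-k)}$ times the derivative of its antiderivative and integrating by parts in $\tau$ against $\partial_\tau K_{\sharp,l}^{\nu,\mu}$ (Lemma \ref{lemma_uDel}(ii) and \S\ref{sec_I_I_only}); this costs a factor of $r$, which is then removed by taking a geometric mean with the $r^{-\delta_0}$ bound. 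Without some substitute for this second mechanism, your argument gives no decay in $|j-k|$ for $j<k$ and the square-function summation in Theorem \ref{thm_S_to_IJ} fails.

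A second, smaller issue: after expanding $TT^*$, the cross terms in which the factor $\zeta(s)$ (rather than $\delta_{s=Q(u+z)}$) appears — the paper's term $\mathbf{II}$ — retain a full $n$-dimensional $z$-integral and cannot be rewritten as the $(n-1)$-dimensional $\sigma$-integral defining $K_{\sharp,l}^{\nu,\mu}$, so the hypothesis (\ref{K_sharp_theorem_ineq_prequel}) does not apply to them. The paper handles these with the separate flat-kernel bound of Proposition \ref{prop_K_flat} (Stein--Wainger). Your claim that ``the $\sigma$-integration recognizes exactly the kernel $K^{\nu,\mu}_{\sharp,l}$'' is only valid for the piece carrying the delta function, where $Q(u+z)-Q(z)=Q(u)+2|u|\tau$ eliminates $\sigma$ from the argument of $\underline{\Delta}_{j-k}$; you need to say how the remaining pieces are controlled.
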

\begin{remark}\label{remark_mu}
In the proof of this theorem, it is crucial that in the hypothesis, the sets in (\ref{K_small_sets_prequel}) and  (\ref{K_sharp_theorem_ineq_prequel}) are completely independent of $\mu$, although they are allowed to depend on $\nu$. This is because when we apply these bounds within the proof of Theorem \ref{thm_K_to_S} in the steps (\ref{Ksharp_app1}) and (\ref{Ksharp_app2}) below, $\nu=\lam(x,t)$ does not depend on variables of integration, while $\mu = \lam(x-u,t-\theta)$ does. Consequently, a dependence on $\nu$ can be controlled by an averaging operator (Lemma \ref{lemma_average}), while a dependence on $\mu$ cannot. Proving that such sets $G^\nu$ and $F_u^\nu$ do exist, independent of $\mu$, was a key difficulty in the previous work \cite{PieYun19} and is also the main difficulty of this paper; it is handled when we prove Theorem \ref{thm_main_K}.
\end{remark}

 After we prove   Theorem \ref{thm_K_to_S} in this section, all that remains to complete the proof of our main theorem is to show that the estimate (\ref{K_sharp_theorem_ineq_prequel}) for $K_{\sharp,l}^{\nu,\mu}$ holds.
 
 To prove that (\ref{S_ineq_prequel}) holds, as claimed in Theorem \ref{thm_K_to_S}, it suffices to prove three bounds, under the hypotheses of the theorem:
\begin{align}
 \|\sup_{\|\lam\|\approx r} |(^{(\eta_k)}I_{2^k}^\lam - ^{(\eta_k)}J_{2^k}^\lam) \Del_j F|\,\|_{L^2} & \ll r^{-\del_0}2^{-\ep_0(j-k)}\|F\|_{L^2} \qquad \text{for $j \geq k$} \label{IJ_cases_IJ}\\
 \|\sup_{\|\lam\|\approx r} |^{(\eta_k)}I_{2^k}^\lam   \Del_j F|\,\|_{L^2} & \ll r^{-\del_0}2^{\ep_0(j-k)}\|F\|_{L^2}  \qquad \text{for $j < k$} \label{IJ_cases_I}\\
 \|\sup_{\|\lam\|\approx r} |^{(\eta_k)}J_{2^k}^\lam   \Del_j F|\,\|_{L^2} & \ll r^{-\del_0}2^{\ep_0(j-k)}\|F\|_{L^2}  \qquad \text{for $j <k$.} \label{IJ_cases_J}
\end{align}
The bound (\ref{IJ_cases_J}) for the smoother ``flat'' operator $J_{2^k}^\lam$ has already been proved in full by \cite[Prop. 7.2]{PieYun19}.
 We will prove (\ref{IJ_cases_IJ}) and (\ref{IJ_cases_I}) by the method of $TT^*$.
 While we give a complete proof, we do not elaborate on the motivation for each step, for which we refer the reader to the more extensive exposition in \cite{PieYun19}.

  \subsection{Computing the kernel of $TT^*$ to prove (\ref{IJ_cases_IJ})}
  Fix $r \geq 1$. Fix $k \in \Z$ and set $a=2^k$. We also fix a bump function $\eta_k$, and for simplicity suppress it in the notation, as follows. Let $\lam(x,t)$ be a measurable  stopping-time function that takes values in 
$r \leq \lam(x,t) \leq 2r$. 
For each $j\in \Z$, define the operator 
\[T=  (^{(\eta_k)}I_{2^k}^\lam - ^{(\eta_k)}J_{2^k}^\lam) \Del_j = (I_{2^k}^\lam -  J_{2^k}^\lam) \Del_j,\]
so that
\[ Tf(x,t)=\int_{\R^{n+2}}f(x-y,t-\theta)e^{iP_{\lambda(x,t)}(\frac{y}{a})}\frac{1}{a^n}\eta(\frac{y}{a})\big[\delta_{s=Q(y)}-\frac{1}{a^2}\zeta(\frac{s}{a^2})\big]\Delta_j(\theta-s)dydsd\theta .\]
The dual $T^*$ is 
\[ T^*g(x,t)=\int_{\R^{n+2}}g(x+z,t+\w)e^{-iP_{\lambda(x+z,t+\w)}(\frac{z}{a})}\frac{1}{a^n}\eta(\frac{z}{a})\big[\delta_{\xi=Q(z)}-\frac{1}{a^2}\z(\frac{\xi}{a})\big]\Delta_j(\w-\xi)dzd\xi d\w. \]
It then follows that
\begin{align*} 
TT^*f(x,t)&= \int_{\R^{2n+4}}f(x-y+z,t-\theta+\w)e^{i[P_{\lambda(x,t)}(\frac{y}{a})-P_{\lambda(x+z-y,t-\theta+\w)}(\frac{z}{a})]}\frac{1}{a^{2n}}\eta(\frac{y}{a})\eta(\frac{z}{a})\\
&\qquad\qquad \cdot\big[\delta_{s=Q(y)}-\frac{1}{a^2}\z(\frac{s}{a^2})\big]\big[\delta_{\xi=Q(z)}-\frac{1}{a^2}\z(\frac{\xi}{a^2})\big]\Delta_j(\theta-s)\Delta_j(\w-\xi)dzd\xi d\w dydsd\theta.
\end{align*} 
As a first step, to verify (\ref{IJ_cases_IJ}) under the hypotheses of Theorem \ref{thm_K_to_S}, we need to show that 
\beq\label{TTstar_goal}
\| TT^*f\|_{L^2} \ll r^{-2\del_0} 2^{-2\ep_0|j-k|}\|f\|_{L^2}.
\eeq

To study $TT^*$, we require further   Littlewood-Paley projections closely related to $\Del_j$ by taking convolutions, antiderivatives, or derivatives. We quote the key properties from \cite[Lemmas 4.3, 4.4, 4.5]{PieYun19}, which we summarize as a lemma:

\newcommand{\uDel}{\underline{\Del}}
\begin{lemma}\label{lemma_uDel}
(i) Convolutions: Define \[ \underline{\Del}_j(t) = \int_\R \Del_j(w+t)\Del_j(w)dw.\]
For each $j \in \Z$,   
\beq\label{uDel_cancel}
\int_{\R} \uDel_j(t)dt=0.
\eeq
For each $a=2^k$,
$\uDel_j(t) = \frac{1}{a^2} \uDel_{j-k}(\frac{t}{a^2})$. 
Uniformly in $j$, $\|\uDel_j\|_{L^1} \ll 1$.

(ii) Antiderivatives: There exist Schwartz functions $\tilde{\Delta}$ and $\tilde{\uDel}$ on $\R$ such that upon defining $\tilde{\Del}_j(t) = 2^{-2j} \tilde{\Delta}(2^{-2j}t)$ and $\tilde{\uDel}_j(t) = 2^{-2j} \tilde{\uDel}(2^{-2j}t)$, then  
\beq\label{antiderivs}
\Del_j(t) = 2^{2j}(\frac{d}{dt} \tilde{\Del}_j)(t), \qquad 
\uDel_j(t) = 2^{2j}(\frac{d}{dt} \tilde{\uDel}_j)(t), \qquad j \in \Z.
\eeq
Finally, $\|\tilde{\Del}_j\|_{L^1} \ll 1$ and $\|\tilde{\uDel}_j\|_{L^1}\ll 1$ uniformly in $j$. 

(iii) Mean value: Define $\psi(t) = (1+t^2)^{-1}$. For each $j \in \Z$ define   $\psi_j(t) = 2^{-2j} \psi(2^{-2j}t)$, so that $\psi_j$ is a non-negative function on $\R$ with $\| \psi_j \|_{L^1} \ll 1, $ uniformly in $j$.   Then $|\uDel'(t)| \ll 2^{-2j} \psi_j(t)$, and for any $|\xi| \leq 2$, 
$|\uDel_j(t + \xi) - \uDel_j(t)| \ll 2^{-2j}|\xi| \psi_j(t)$.
\end{lemma}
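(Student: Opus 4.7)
\emph{Proof proposal.} The plan is to reduce all three claims to the single observation that each of $\Del_j$, $\uDel_j$, and their antiderivatives is a parabolic rescaling of a fixed Schwartz function, and that the corresponding Fourier transforms are supported in an annulus $|\xi|\asymp 1$ bounded away from the origin. Given this structure, each claim reduces either to a Fourier-side triviality or to a short scaling argument, and nothing genuinely new is needed beyond the properties of $\Del_j, \tilde\Del_j$ already in circulation from the Littlewood-Paley setup.

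For (i), I would observe that $\uDel_j = \Del_j \ast \Del_j^\vee$ with $\Del_j^\vee(w) := \Del_j(-w)$, so that $\widehat{\uDel_j}(\xi) = |\widehat{\Del_j}(\xi)|^2$. The cancellation identity (\ref{uDel_cancel}) is then immediate upon evaluating at $\xi = 0$, since $\widehat{\Del_j}$ vanishes near the origin by construction. Young's convolution inequality bounds $\|\uDel_j\|_{L^1}$ by $\|\Del_j\|_{L^1}^2$, uniformly in $j$ because the rescaling $\Del_j(t) = 2^{-2j}\Del_0(2^{-2j}t)$ is $L^1$-preserving. The scaling identity under $a = 2^k$ follows from a direct substitution in the convolution defining $\uDel_j$.

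For (ii), the function $\widehat{\Del_0}(\xi)/(2\pi i \xi)$ is smooth and compactly supported on the same annulus as $\widehat{\Del_0}$, since division by $\xi$ is harmless away from the origin. Its inverse Fourier transform is therefore a Schwartz function $\tilde{\Del}$ satisfying $\tilde{\Del}' = \Del_0$. Setting $\tilde{\Del}_j(t) = 2^{-2j}\tilde{\Del}(2^{-2j}t)$ and applying the chain rule gives (\ref{antiderivs}), while the $L^1$ bounds are immediate from scale invariance. The analogous construction applied to $\uDel_0$ produces $\tilde{\uDel}$, using the zero-mean property from (i) to ensure smoothness of $\widehat{\uDel_0}/\xi$ at the origin.

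For (iii), Schwartz decay of $\uDel'$ yields $|\uDel'(s)| \ll \psi(s)$, and parabolic rescaling then produces the pointwise derivative bound $|\uDel_j'(t)| \ll 2^{-2j}\psi_j(t)$. The Lipschitz-type estimate follows from the integral form of the mean value theorem, $\uDel_j(t+\xi) - \uDel_j(t) = \xi\int_0^1 \uDel_j'(t+s\xi)\,ds$, combined with a comparison of $\psi_j(t+s\xi)$ to $\psi_j(t)$ on the relevant interval. I expect this last comparison to be the only technical point requiring care: for $|t|\geq 4$ it is immediate from $|t+s\xi|\geq |t|/2$, while for small $|t|$ one appeals to the full Schwartz decay $|\uDel'(w)| \ll_N (1+w^2)^{-N}$ with $N$ chosen large enough relative to the regime of $|t|$ and $2^{2j}$, which absorbs any loss from comparing $\psi_j$ at nearby points with constants independent of $j$.
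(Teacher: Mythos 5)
The paper does not actually prove this lemma---it quotes \cite[Lemmas 4.3--4.5]{PieYun19}---so any self-contained argument is already a departure from the text. Your parts (i) and (ii) are correct: writing $\underline{\Delta}_j=\Delta_j*\Delta_j^\vee$ gives $\widehat{\underline{\Delta}_j}=|\widehat{\Delta_j}|^2$, which vanishes at the origin (equivalently, Fubini plus $\int\Delta_j=0$); Young's inequality and the $L^1$-normalized parabolic rescaling give the uniform $L^1$ bounds and the dilation identity; and dividing the compactly supported, origin-avoiding Fourier transform by $2\pi i\xi$ produces the Schwartz antiderivatives. One small correction in (ii): for $\tilde{\underline{\Delta}}$ it is the support of $\widehat{\underline{\Delta}_0}$ away from the origin, not the zero mean, that makes $\widehat{\underline{\Delta}_0}(\xi)/\xi$ admissible---zero mean alone would not make the quotient Schwartz after inversion---but since the support condition is available nothing is lost.

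The genuine gap is in part (iii), precisely at the step you flag. Your comparison $\psi_j(t+s\xi)\ll\psi_j(t)$ is fine for $|t|\geq 4$, but for $|t|<4$ no choice of $N$ can rescue the estimate uniformly in $j\in\Z$, because for $j<0$ the inequality $|\underline{\Delta}_j(t+\xi)-\underline{\Delta}_j(t)|\ll 2^{-2j}|\xi|\psi_j(t)$ is false. Take $t=1$, $\xi=-1$: then $\underline{\Delta}_j(0)=2^{-2j}\underline{\Delta}(0)$ with $\underline{\Delta}(0)=\|\Delta_0\|_{L^2}^2>0$, while $\underline{\Delta}_j(1)=2^{-2j}\underline{\Delta}(2^{-2j})$ is negligible, so the left-hand side is $\gtrsim 2^{-2j}$, whereas the right-hand side equals $2^{-4j}(1+2^{-4j})^{-1}\leq 1$; the ratio blows up as $j\to-\infty$. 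The resolution is that the mean-value estimate is only needed, and only true, for nonnegative index: in this paper it is invoked with index $j-k$ in \S\ref{sec_I_term} and \S\ref{sec_II_term}, inside the proof of (\ref{IJ_cases_IJ}), which is restricted to $j\geq k$. In that range your argument closes with no large $N$ at all: for $|t|<4$ and $j\geq 0$ one has $\psi_j(t)\geq 2^{-2j}/17$, hence $2^{-2j}|\xi|\psi_j(t)\geq 2^{-4j}|\xi|/17$, which dominates $|\xi|\sup_{0\leq s\leq 1}|\underline{\Delta}_j'(t+s\xi)|\leq 2^{-4j}|\xi|\,\|\underline{\Delta}'\|_{L^\infty}$. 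You should therefore either restrict the second estimate in (iii) to $j\geq 0$ or note explicitly that this is the only regime in which it is applied.
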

The dilations employed in the lemma are intended to be compatible with parabolic rescaling. We will apply (iii) in \S \ref{sec_I_term} and \S \ref{sec_II_term} to prove (\ref{IJ_cases_IJ}), and (i) and (ii) in \S \ref{sec_I_I_only} to prove (\ref{IJ_cases_I}).

A short computation using the definition of $\uDel_{j-k}$ (entirely analogous to \cite[(7.10)]{PieYun19}), shows that the operator $TT^*$ given above can be written as
\beq\label{TTK}
TT^*f(x,t)=\int_{\R^{n+1}}f(x-u,t-\theta)\frac{1}{a^{n+2}}K^{\lambda(x,t),\lambda(x-u,t-\theta)}(\frac{u}{a},\frac{\theta}{a^2})dud\theta
\eeq
where for each $\nu,\mu\in\R^{d-1}$, we define the kernel 
\begin{align*} K^{\nu,\mu}(u,\theta)&=\int_{\R^{n+2}}e^{iP_\nu(u+z)-iP_\mu(z)}\eta(u+z)\eta(z)\\
&\qquad\cdot\big[\delta_{s=Q(u+z)}-\zeta(s)\big]\big[\delta_{\xi=Q(z)}-\z(\xi)\big]\underline{\Delta}_{j-k}(\theta-s+\xi)dzd\xi ds\\
&=\int_{\R^{n+1}}e^{iP_\nu(u+z)-iP_\mu(z)}\eta(u+z)\eta(z)\big[\delta_{\xi=Q(z)}-\z(\xi)\big]\underline{\Delta}_{j-k}(\theta-Q(u+z)+\xi)dzd\xi \\
&\qquad +\int_{\R^{n+2}}e^{iP_\nu(u+z)-iP_\mu(z)}\eta(u+z)\eta(z)\zeta(s)\big[\delta_{\xi=Q(z)}-\z(\xi)\big]\underline{\Delta}_{j-k}(\theta-s+\xi)dzd\xi ds\\
&=: {\bf{I}}+{\bf{II}}. 
\end{align*} 
To verify (\ref{TTstar_goal}), it suffices to prove the bound separately, for the contribution of the term $\mathbf{I}$ to the kernel, and the contribution of $\mathbf{II}$.

\subsection{The contribution of the term $\mathbf{I}$}\label{sec_I_term}
To analyze the contribution of  {\bf{I}}, we   write
\[ {\bf{I}}=\sum_{l=1}^n{\bf{I}}_l\]
where for each $1\le l\le n$,
\begin{multline*}
    {\bf{I}}_l= W_l\Bigl(\frac{u}{|u|}\Bigr)\int_{\R^{n}}e^{iP_\nu(u+z)-iP_\mu(z)}\eta(u+z)\eta(z)\underline{\Delta}_{j-k}(\theta-Q(u+z)+Q(z))dz  \\
   -W_l\Bigl(\frac{u}{|u|}\Bigr)\int_{\R^{n+1}}e^{iP_\nu(u+z)-iP_\mu(z)}\eta(u+z)\eta(z)\z(\xi)\underline{\Delta}_{j-k}(\theta-Q(u+z)+\xi)dzd\xi .
\end{multline*}
Using the fact that $\int\z(\xi+Q(z))d\xi=1$, this can be re-written as
\begin{multline}\label{I_uses_zero}
 \mathbf{I}_l= W_l\Bigl(\frac{u}{|u|}\Bigr)\int_{\R^{n+1}}e^{iP_\nu(u+z)-iP_\mu(z)}\eta(u+z)\eta(z)\z(\xi+Q(z))\\ \cdot\big(\underline{\Delta}_{j-k}(\theta-Q(u+z)+Q(z))-\underline{\Delta}_{j-k}(\theta-Q(u+z)+Q(z)+\xi)\big)dzd\xi
. 
\end{multline} 
For each fixed $1 \leq l \leq n$, we make the $l$-th  change of variables (\ref{subdef}), so that
\begin{multline*}
    \mathbf{I}_l= \Bigl(\frac{|u_l|}{|u|}\Bigr)^{n-2} W_l\Bigl(\frac{u}{|u|}\Bigr)\int_{\R^{2}}K_{\sharp,l}^{\nu,\mu}(u,\tau;\xi) \\
    \cdot \big(\underline{\Delta}_{j-k}(\theta-Q(u)-2|u|\tau)-\underline{\Delta}_{j-k}(\theta-Q(u)-2|u|\tau+\xi)\big) d\tau d\xi,\end{multline*}
in which 
\beq\label{Ksharp_app1} 
K_{\sharp,l}^{\nu,\mu}(u,\tau;\xi)
= 
\int_{\R^{n-1}}e^{iP_\nu(u+z)-iP_\mu(z)}\eta(u+z) \eta(z) \zeta(\xi + Q(z))d\sigma,
\eeq
where $z$ is defined implicitly in terms of $u, \tau, \sig$, and we recall from (\ref{TTK}) that
\beq\label{stopping_time_dep}
\nu =\lam(x,t), \qquad \mu = \lam(x-u,t-\theta)
\eeq
are stopping-times with values such that $r \leq \| \nu\|,\|\mu\| < 2r$.
 Note that the $\xi$-support is in $|\xi| \leq 2$.
By the mean-value property of Lemma \ref{lemma_uDel} (iii),
\[
    |{\bf{I}}_l|\ll 2^{-2(j-k)}\chi_{B_2}(u)\int_{\R^2} |K_{\sharp,l}^{\nu,\mu}(u,\tau;\xi)|\chi_{B_1}(\tau)\chi_{B_2}(\xi)\psi_{j-k}(\theta-Q(u)-2|u|\tau)d\tau d\xi .
\]
For each fixed $|\xi| \leq 2$, $K_{\sharp,l}^{\nu,\mu}(u,\tau;\xi)$ is a kernel of the form (\ref{K_sharp_dfn_Psi}) for 
\[\Psi(u,z) = \Psi_{\xi}(u,z) = \eta(u+z)\eta(z)\zeta(\xi+Q(z)).\]
We now apply the hypothesis of Theorem \ref{thm_K_to_S} to bound $K_{\sharp,l}^{\nu,\mu}(u,\tau;\xi)$ as in (\ref{K_sharp_theorem_ineq_prequel}). Note that the resulting bound is uniform in $\xi$, and so we can then integrate trivially over $\xi\in B_2(\R)$. We conclude that 
the contribution of $\mathbf{I}_l$ to $TT^*f$ in (\ref{TTK})   is  bounded by
\beq\label{avg_op}
\ll 2^{-2(j-k)} \iint_{\R^{n+1}} |f(x-u,t-\theta)|
  \chi( (x,t),(u,\theta))  du d\theta
  \eeq
     in which we define the function $\chi( (x,t),(u,\theta))$ to be
     \begin{multline*}
     \int_{\R} \frac{1}{a^n} \left\{r^{-\del} \chi_{B_2}(\frac{u}{a}) \chi_{B_1}(\tau)+  \chi_{G^{\lam(x,t)}}(\frac{u}{a}) \chi_{B_1(\tau)} +  \chi_{B_2}(\frac{u}{a})\chi_{F_{(\frac{u}{a})}^{\lam(x,t)}}(\tau)\right\} \cdot
     \\ \frac{1}{a^2} \psi_{j-k} \left( \frac{\theta - Q(u) - 2a|u| \tau}{a^2}\right) d\tau.
     \end{multline*}
We recall that $a=2^k$ is fixed, so that (\ref{avg_op}) is an averaging operator, not a maximal operator.  Next we will apply \cite[Lemma 7.5]{PieYun19}, which we quote:
\begin{lemma}\label{lemma_average}
Let $\chi(w,y)$ be an integrable function on $\R^m \times \R^m$ with 
\beq\label{chi_hyp}
\| \sup_{w \in \R^m}|\chi(w,y)|\,\|_{L^1(\R^m(dy))} \leq C, \qquad \quad \sup_{w \in \R^m}\| \chi(w,y)\|_{L^1(\R^m(dy))} \leq \lam.
\eeq
Then 
\[ \| \int_{\R^m} f(w-y) \chi(w,y) dy \|_{L^p(\R^m(dw))} \ll_{C,p} \lam^{1-1/p} \|f\|_{L^p(\R^m)}, \qquad 1 \leq p \leq \infty.\]
\end{lemma}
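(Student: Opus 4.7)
The plan is to establish that the operator $Tf(w) := \int_{\R^m} f(w-y)\chi(w,y)\,dy$ is bounded at the two endpoints $L^1 \to L^1$ and $L^\infty \to L^\infty$ with the correct quantitative operator norms in $C$ and $\lambda$, and then to deduce the $L^p$ bound for intermediate $p$ by Riesz--Thorin interpolation. The overall factor $\lambda^{1-1/p}$ with the correct exponent should emerge automatically from this interpolation once the two endpoints are correctly identified.

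At the $L^\infty$ endpoint, one pulls $\|f\|_{L^\infty}$ outside the integral and applies the second hypothesis in (\ref{chi_hyp}) directly, obtaining $|Tf(w)| \leq \|f\|_{L^\infty}\int_{\R^m}|\chi(w,y)|\,dy \leq \lambda\|f\|_{L^\infty}$ pointwise in $w$, hence $\|T\|_{L^\infty\to L^\infty}\leq \lambda$. At the $L^1$ endpoint, the natural move is to majorize $|\chi(w,y)| \leq \sup_{w'\in\R^m}|\chi(w',y)|$, exchange the order of integration via Tonelli, and use translation invariance of Lebesgue measure on $\R^m$ to factor out $\|f\|_{L^1}$, which gives $\|Tf\|_{L^1}\leq \|f\|_{L^1}\int_{\R^m}\sup_{w'}|\chi(w',y)|\,dy \leq C\|f\|_{L^1}$ by the first hypothesis in (\ref{chi_hyp}). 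Thus $\|T\|_{L^1\to L^1}\leq C$. Riesz--Thorin interpolation then yields $\|T\|_{L^p\to L^p}\leq C^{1/p}\lambda^{1-1/p}$ for every $1\leq p\leq \infty$, which is precisely the claimed bound once $C^{1/p}$ is absorbed into the implicit constant depending on $C$ and $p$.

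There is no real obstacle in this argument; it is a standard Schur-type estimate followed by a convexity interpolation. The only conceptual point worth emphasizing is the \emph{asymmetric} role of the two hypotheses in (\ref{chi_hyp}): the $L^\infty\to L^\infty$ bound uses only the uniform $L^1(dy)$-norm of each slice $\chi(w,\cdot)$, while the $L^1\to L^1$ bound requires the strictly stronger envelope control obtained by first taking a supremum in $w$ inside the $y$-integral. It is this asymmetry that produces the sharp scaling $\lambda^{1-1/p}$; neither hypothesis alone, used symmetrically in a Schur test, would suffice.
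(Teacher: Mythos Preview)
Your proof is correct and is the standard argument. The paper does not actually prove this lemma; it is quoted verbatim from \cite[Lemma 7.5]{PieYun19}, so there is no proof in the present paper to compare against, but the interpolation argument you give is exactly the intended one.
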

To verify the first condition in (\ref{chi_hyp}) for our choice of $\chi((x,t),(u,\theta))$ above, we simply note that by the hypothesis (\ref{K_small_sets_prequel}), the small sets satisfy $G^{\lam(x,t)}\subset B_2(\R^{n})$ and $F_{(\frac{u}{a})}^{\lam(x,t)} \subset B_1(\R)$ for all $(x,t)$, so that
 \beq\label{chi_L1_check}
 \sup_{(x,t)}|\chi((x,t),(u,\theta))|
    \ll  \int_{\R} \frac{1}{a^n}
\chi_{B_2}(\frac{u}{a}) \chi_{B_1}(\tau)
\frac{1}{a^2} \psi_{j-k} \left( \frac{\theta - Q(u) - 2a|u| \tau}{a^2}\right) d\tau.
\eeq
We compute the $L^1(dud\theta)$ norm of the above function of $(u,\theta)$ by integrating first in $\theta$, and we see that the first condition in (\ref{chi_hyp}) holds, uniformly in $j,k$. For the second condition in (\ref{chi_hyp}), we will use the small measures of the sets $G^{\lam(x,t)}$ and $F_{(\frac{u}{a})}^{\lam(x,t)}$. For each fixed $(x,t)\in \R^{n+1}$, we compute the $L^1(dud\theta)$ norm of $\chi((x,t),(u,\theta))$ by integrating in $\theta, \tau, u$ (in that order), and the second condition holds with $\lam = r^{-\del}$, by the hypothesis (\ref{K_small_sets_prequel}). Hence applying Lemma \ref{lemma_average} with $\lam = r^{-\del}$ and $p=2$, shows that the $L^2(dxdt)$ norm of the operator (\ref{avg_op}) is $\ll r^{-\del/2} 2^{-2(j-k)}$. This proves the conclusion of (\ref{IJ_cases_IJ}) for the contribution of $\mathbf{I}_l$, with $\del_0 = \del/4$ and $\ep_0=1$. Adding the contributions for $1 \leq l \leq n$ thus proves the conclusion of (\ref{IJ_cases_IJ}) for the contribution of $\mathbf{I}$.

\subsection{The contribution of the term $\mathbf{II}$}\label{sec_II_term}
To analyze the contribution of $\mathbf{II}$ to the kernel $K^{\nu,\mu}(u,\theta)$ of $TT^*$ in (\ref{TTK}), the argument is similar, but simpler, since we do not require the change of variables (\ref{subdef}) to study this term. A brief argument using $\int_{\R} \zeta(\xi + Q(z)) d\xi=1$ and linear changes of variables (entirely analogous to \cite[\S 7.3.2]{PieYun19}) shows that we may write the contribution of $\mathbf{II}$ to the kernel $K^{\nu,\mu}(u,\theta)$ as
\beq\label{II_int} \mathbf{II}= \iint_{\R^{n+2}} K_\flat^{\lam(x,t),\lam(x-u,t-\theta)}(u;\xi,s) (\uDel_{j-k}(\theta-s)  - \uDel_{j-k}(\theta-s+\xi)) ds d\xi,
\eeq
where we have defined for any $\nu,\mu \in \R^{d-1}$ the function
\[ K_\flat^{\nu,\mu}(u;\xi,s)
     = \int_{\R^{n}} e^{i P_\nu(u+z) - iP_\mu(z)}\eta(u+z)\eta(z)\zeta(\xi + Q(z)) \zeta (s+Q(z))dz.\]
Since $\eta$ and $\zeta$ are supported in $B_1(\R^n)$, it follows that the integral is over $z \in B_1(\R^n)$, and $K_\flat^{\nu,\mu}(u;\xi,s)$ is supported where $u \in B_2(\R^n)$, $\xi \in B_2(\R)$, $s \in B_2(\R)$. 
We apply the mean value of Lemma \ref{lemma_uDel} (iii) to bound the difference of $\uDel_{j-k}$ terms in (\ref{II_int}) by $\ll 2^{-2(j-k)} |\xi| \psi_{j-k}(\theta-s).$  We then apply  Proposition \ref{prop_K_flat} to bound $K_\flat^{\nu,\mu}(u;\xi,s)$, uniformly in $\xi,s \in B_2(\R)$, and integrate trivially over $\xi \in B_2(\R)$. Consequently, we see that the contribution of $\mathbf{II}$ to $TT^*f(x,t)$  in (\ref{TTK})  is bounded by 
\beq\label{avg_op_2}
\ll 2^{-2(j-k)} \iint_{\R^{n+1}} |f(x-u,t-\theta)|
  \chi( (x,t),(u,\theta))  du d\theta
\eeq
     in which we define the function $\chi( (x,t),(u,\theta))$ to be
\[
\chi( (x,t),(u,\theta)) = \frac{1}{a^n} (r^{-\del} \chi_{B_2}(\frac{u}{a}) + \chi_{G^{\lam(x,t)}}(\frac{u}{a})) \frac{1}{a^2} \int_{\R} \psi_{j-k}(\frac{\theta}{a^2}-s) \chi_{B_2}(s)ds. 
\]
We will again apply Lemma \ref{lemma_average} to bound the $L^2(\R^{n+1})$ norm of the averaging operator (\ref{avg_op_2}). The  first condition in (\ref{chi_hyp}) holds because 
\[ \sup_{(x,t)} |\chi( (x,t),(u,\theta)) |
    \leq \frac{1}{a^n}   \chi_{B_2}(\frac{u}{a})   \frac{1}{a^2} \int_{\R} \psi_{j-k}(\frac{\theta}{a^2}-s) \chi_{B_2}(s)ds.
\]
This has  bounded $L^1(dud\theta)$ norm, uniformly in $j,k,$ by integrating first in $\theta$ (using Lemma \ref{lemma_uDel} (iii)), then in $s$ and $u$.
The second condition in (\ref{chi_hyp}) is verified by using the fact that for each fixed $(x,t)$, $|G^{\lam(x,t)}| \leq r^{-\del}$ by Proposition \ref{prop_K_flat}, so that uniformly in $j,k$
\[ \sup_{(x,t)} \|\chi( (x,t),(u,\theta)) \|_{L^1(dud\theta)} \ll r^{-\del}.\]
Then applying Lemma \ref{lemma_average} with $\lam=r^{-\del}$ and $p=2$ shows that the $L^2(dx,dt)$ norm of the operator (\ref{avg_op_2}) is $\ll r^{-\del/2}2^{-2(j-k)}$, so that the $L^2$ norm of the contribution of $\mathbf{II}$ to (\ref{TTK}) is $\ll r^{-\del/2}2^{-2(j-k)}$. This completes the proof of (\ref{IJ_cases_IJ}), with $\del_0 = \del/4$ and $\ep_0 =1$.

\subsection{Proof of (\ref{IJ_cases_I})}\label{sec_I_I_only}
The last step to complete the proof of Theorem \ref{thm_K_to_S} is to verify the $L^2$ bound in (\ref{IJ_cases_I}) in the case $j<k$. It suffices to prove two bounds: first, a bound with decay in $r$,
\beq\label{I_decay_r}
\| \sup_{\| \lam \| \approx r} |I^\lam_{2^k} \Del_j F|\,\|_{L^2(\R^{n+1})} \ll r^{-\del_0} \|F\|_{L^2(\R^{n+1})}, \qquad j<k, \qquad \text{some $\del_0>0$,}
\eeq
and second, a bound with acceptable growth in $r$ and a decay factor $2^{j-k}$,
\beq\label{I_decay_jk}
\| \sup_{\| \lam \| \approx r} |I^\lam_{2^k} \Del_j F|\,\|_{L^2(\R^{n+1})} \ll r^{1/2}2^{j-k} \|F\|_{L^2(\R^{n+1})}, \qquad j<k.
\eeq
Taking the geometric mean of these bounds then shows that for any real $0\leq \theta \leq 1$,
\[\| \sup_{\| \lam \| \approx r} |I^\lam_{2^k} \Del_j F|\,\|_{L^2(\R^{n+1})} \ll r^{(1-\theta)/2 - \theta \del_0}2^{(j-k)(1-\theta)} \|F\|_{L^2(\R^{n+1})}.\]
This suffices to show (\ref{IJ_cases_I}), as long as $0<\theta<1$ is chosen sufficiently close to 1 that $(1-\theta)/2 - \theta \del_0<0$; such a choice of course exists, since $\del_0>0$.

We will prove (\ref{I_decay_r}) and (\ref{I_decay_jk}) by a $TT^*$ argument for the operator 
\[T_If(x,t)=I^\lam_{2^k} \Del_jf(x,t),\]
where $\lam(x,t)$ is a measurable stopping-time taking values in $[r,2r)$.
We compute that
\beq\label{TITI_kernel} T_IT_I^*f(x,t) = \int_{\R^{n+1}} f(x-u,t- \theta) \frac{1}{a^{n+2}}K_I^{\lam(x,t),\lam(x-u,t-\theta)}(\frac{u}{a},\frac{\theta}{a^2})dud\theta,
\eeq
with kernel
\[ K_I^{\nu,\mu}(u,\theta) = \int_{\R^{n}} e^{iP_\nu(u+z) - iP_\mu(z)} \eta(u+z)\eta(z)  \uDel_{j-k}(\theta-Q(u+z)+Q(z)) dz .\]
We again employ the partition of unity (\ref{W_partition}), writing 
\[ K_I^{\nu,\mu}(u,\theta) = \sum_{l=1}^n W_l(\frac{u}{|u|}) K_{I}^{\nu,\mu}(u,\theta) =:\sum_{l=1}^n K_{I,l}^{\nu,\mu}(u,\theta).\]
 For each $l$ we perform the $l$-th change of coordinates (\ref{subdef}), so that we can then write
 \beq\label{KI_l}
 K_{I,l}^{\nu,\mu}(u,\theta) = 
    \left(\frac{|u_l|}{|u|}\right)^{n-2} W_l\left(\frac{u}{|u|}\right) \int_{\R}  K_{\sharp,l}^{\nu,\mu}(u,\tau)\uDel_{j-k}(\theta-Q(u) - 2|u| \tau) d\tau,
    \eeq
    where for $W_l(u/|u|) \neq 0$,
\beq\label{Ksharp_app2} K_{\sharp,l}^{\nu,\mu}(u,\tau)
    = \int_{\R^{n-1}} e^{i P_\nu(u+z) - iP_\mu(z)} \eta(u+z)\eta(z)d\sig,
    \eeq
    in which $z$ is implicitly defined in terms of $u,\tau,\sig$ as in (\ref{zformula}). Here, $\nu=\lam(x,t)$ and $\mu=\lam(x-u,t-\theta)$. For $W_l( \frac{u}{|u|})=0$, we set $K_{\sharp,l}^{\nu,\mu}(u,\tau)=0$.
We note for later reference that  $K_{\sharp,l}^{\nu,\mu}(u,\tau)$ is supported on $u \in B_2(\R^n), \tau \in B_1(\R)$ (and the integral restricts to $|\sig| \leq c_0^{-1}$ as explained in (\ref{sig_bound})), so that a trivial bound is 
\beq\label{KI_sharp_trivial}
|K_{\sharp,l}^{\nu,\mu}(u,\tau)| \leq c_0^{-(n-1)}\chi_{B_2}(u) \chi_{B_1}(\tau).
\eeq
Moreover, by the hypothesis of Theorem \ref{thm_K_to_S}, a far stronger bound of the form (\ref{K_sharp_theorem_ineq_prequel}) holds for   $ K_{\sharp,l}^{\nu,\mu}$. We apply this strong bound to see that the contribution of the $l$-th term to $T_IT_I^*f$ in (\ref{TITI_kernel}) is bounded by
\beq\label{TITI_part1} \ll \iint_{\R^{n+1}} |f(x-u,t-\theta)|\chi((x,t),(u,\theta))dud\theta,
\eeq
where $\chi((x,t),(u,\theta))$ is the function 
    \begin{multline*}
     \int_{\R} \frac{1}{a^n} \left\{r^{-\del} \chi_{B_2}(\frac{u}{a}) \chi_{B_1}(\tau)+  \chi_{G^{\lam(x,t)}}(\frac{u}{a}) \chi_{B_1(\tau)} +  \chi_{B_2}(\frac{u}{a})\chi_{F_{(\frac{u}{a})}^{\lam(x,t)}}(\tau)\right\}\cdot \\
     \frac{1}{a^2} \uDel_{j-k} \left( \frac{\theta - Q(u) - 2a|u| \tau}{a^2}\right) d\tau.
     \end{multline*}
     By Lemma \ref{lemma_uDel} (i), $\uDel_{j-k}(\theta)$ has bounded $L^1(d\theta)$ norm, uniformly in $j,k$. We may thus apply Lemma \ref{lemma_average}, using the small measures of the sets $G^{\lam(x,t)}$ and $F_{(\frac{u}{a})}^{\lam(x,t)}$ and arguing as we did in (\ref{chi_L1_check}). We  conclude that the $L^2$ norm of the operator (\ref{TITI_part1}) is $\ll r^{-\del/2}$, which suffices to prove (\ref{I_decay_r}) with $\del_0 = \del/4$.
     
To prove the last piece of the puzzle, namely (\ref{I_decay_jk}), we again work with $T_IT_I^*$, since this allows us to isolate an integral in $\tau$, and then use properties of antiderivatives of $\uDel_j$ from Lemma \ref{lemma_uDel} (ii). (See \cite[\S 7.4.1 and \S 10.2]{PieYun19} for an in-depth motivation for this approach.)
Precisely, we return to examine the $l$-th part of the kernel of $T_I T_I^*$ defined in (\ref{KI_l}). By the antidervative property (ii) of Lemma \ref{lemma_uDel}, 
\[ \uDel_{j-k}(\theta - 2|u| \tau - Q(u)) = -\frac{2^{2(j-k)}}{2|u|} \frac{d}{d\tau}\tilde{\uDel}_{j-k}(\theta - 2|u| \tau - Q(u)).\]
Thus after integration by parts in $\tau$,
\beq\label{K_deriv_tau}
K_{I,l}^{\nu,\mu}(u,\theta) = \frac{2^{2(j-k)}}{2|u|}
    \left(\frac{|u_l|}{|u|}\right)^{n-2} W_l\left(\frac{u}{|u|}\right) \int_{\R}  \partial_\tau K_{\sharp,l}^{\nu,\mu}(u,\tau)\tilde{\uDel}_{j-k}(\theta- 2|u| \tau -Q(u) ) d\tau.
    \eeq
Recall the trivial bound (\ref{KI_sharp_trivial}) for $K_{\sharp,l}^{\nu,\mu}(u,\tau)$. In fact, from the definition (\ref{Ksharp_app2}), $K_{\sharp,l}^{\nu,\mu}(u,\tau)$ is a smooth function of $\tau$, upon recalling the definition of $z$ in terms of $u,\tau,\sig$ in (\ref{zformula}). After taking a derivative with respect to $\tau$, we get a  trivial upper bound for $\partial_\tau K_{\sharp,l}^{\nu,\mu}(u,\tau)$ similar to (\ref{KI_sharp_trivial})   but now with a factor of $r$, which appears because the coefficients of the phase $P_\nu(u+z) - P_\mu(z)$ (as a function of $\tau$) are of size $\|\nu\|,\|\mu\| \approx r$. Precisely, we see that
\[ |\partial_\tau K_{\sharp,l}^{\nu,\mu}(u,\tau)| \ll r \cdot  c_0^{-(n-1)}\chi_{B_2}(u) \chi_{B_1}(\tau),
\]
in which the implied constant depends on the fixed polynomials $\{p_1,\ldots, p_M\}$.
Applying this in (\ref{K_deriv_tau}), it follows that uniformly in $l$,
\[  |K_{I,l}^{\nu,\mu}(u,\theta)|  \ll r \frac{2^{2(j-k)}}{2|u|}\chi_{B_2}(u) \int_{\R}  \chi_{B_1}(\tau) \tilde{\uDel}_{j-k}(\theta-Q(u) - 2|u| \tau) d\tau.
    \]
Now we compute the contribution of this term to $T_IT_I^*f$ in (\ref{TITI_kernel}): this portion of the operator is bounded by 
\[\ll r 2^{2(j-k)} \iint_{\R^{n+1}} |f(x-u,t-\theta)|\chi((x,t),(u,\theta))dud\theta,\]
now with the function  
\[
\chi((x,t),(u,\theta))= \int_\R \left( \frac{2|u|}{a}\right)^{-1} \frac{1}{a^n}\chi_{B_2}\left(\frac{u}{a}\right)\chi_{B_1}(\tau)\frac{1}{a^2} \tilde{\uDel}_{j-k} \left( \frac{\theta}{a^2} - 2a|u|\frac{\tau}{a^2} - \frac{Q(u)}{a^2}\right)d\tau.\]
(Note in this case that the function $\chi$ defined above is independent of $(x,t)$ since no stopping-times are present.)
To apply Lemma \ref{lemma_average} we must check what values of $C$ and $\lam$ are valid in the hypothesis (\ref{chi_hyp}) of the lemma. 
We recall that $a=2^k$ is fixed, and compute the integral in $\theta$ first, to see that
\[ \| \sup_{(x,t)}|\chi((x,t),(u,\theta))|\;\|_{L^1(dud\theta)}
    \leq \int_{\R^{n}} 
 \left( \frac{2|u|}{a}\right)^{-1} \frac{1}{a^n}\chi_{B_2}\left(\frac{u}{a}\right) du \cdot
 \int_\R
 \chi_{B_1}(\tau)d\tau \cdot \|\tilde{\uDel}_{j-k}\|_{L^1}.\]
 We recall from Lemma \ref{lemma_uDel} (ii) that the $L^1$ norm of $\tilde{\uDel}_{j-k}$ is bounded uniformly in $j,k$. Then, since $|u|^{-1}$ is locally integrable in $\R^n$ for $n \geq 2$, we see that the integral over $u$ is bounded, uniformly in $a=2^k$. Hence we may apply Lemma \ref{lemma_average} for $p=2$ with finite (but not ``small'') constants $C$ and $\lam=C'$. (Here we do not expect to benefit from any decay in $r$, since no ``small sets'' are present.) We conclude that the contribution of $K^{\nu,\mu}_{I,l}$ to the operator $T_IT_I^*f$ in (\ref{TITI_kernel}) has $L^2$ operator norm bounded by $\ll r 2^{2(j-k)}$. Summing over $1 \leq l \leq n$, we see that in total $T_IT_I^*$ has $L^2$ operator norm $\ll r 2^{2(j-k)}$, and consequently (\ref{I_decay_jk}) holds. 
This completes the proof Theorem \ref{thm_K_to_S}.

To finish the proof of our main result, Theorem \ref{thm_main_R}, we next verify that the key hypothesis of Theorem \ref{thm_K_to_S} holds; that is, that $K_{\sharp,l}^{\nu,\mu}$ admits the appropriate upper bound (\ref{K_sharp_theorem_ineq_prequel}). This is our focus in the next section.

\section{Step 5: Bounding the oscillatory integral $K^{\nu,\mu}_{\sharp,l}$}\label{sec_K}
In this section, we prove the oscillatory integral bound (\ref{K_sharp_theorem_ineq_prequel}) required in Theorem \ref{thm_K_to_S}. We specify that $\ga(y) = (y,Q(y))$ for $Q(y) = \sum_{1 \leq j \leq n} \theta_j y_j^2$, $\theta_j \in \{ \pm 1\}$. At this point, we also restrict our attention to an appropriate class of polynomials for the  $\Span\{p_1,\ldots,p_M\}$, as specified in Theorem \ref{thm_main_R}. Our main result for the oscillatory integral kernel is as follows:
\begin{thm}\label{thm_main_K}
Let $\Psi(u,z)$ be a $C^1$ function supported on $B_2 \times B_1 \subset \R^n \times \R^n$, with 
$\|\Psi\|_{C^1(\sig)} \leq 1$ as in (\ref{Psi_norm}).
Fix $d \geq 2$ and fix nonzero polynomials $p_2,\ldots, p_d$ such that each $p_j$ is homogeneous of degree $j$, and $p_2(y) \neq CQ(y)$ for any $C \neq 0$. 
For each $\lam \in \R^{d-1}$ set 
\[ P_\lam(y) = \sum_{j=2}^d \lam_j p_j(y).\]
For each $1 \leq l \leq n$, for $u\in \R^n$ such that $u/|u|$ lies in the support of the partition $W_l$ defined in (\ref{W_partition}), consider the kernel $K_{\sharp,l}^{\nu,\mu}(u,\tau)$ as defined in (\ref{K_sharp_dfn_Psi}).
Then there exists a small $\del>0$ such that for all $r \geq 1$, if   $\nu,\mu \in \R^{d-1}$ satisfy 
\[ r \leq \|\nu\|,\|\mu\| \leq 2r,\]
then there exists a measurable set $G^\nu \subset B_2(\R^n)$ (depending on $\nu$ but  independent of $\mu,r,\Psi$), and for each $u \in B_2(\R^n)$ there exists a measurable set $F_u^\nu \subset B_1(\R)$ (depending on $u$ and $\nu$ but independent of $\mu, r, \Psi$), such that 
\beq\label{K_small_sets}
|G^\nu| \ll r^{-\del}, \qquad |F_u^\nu| \ll r^{-\del},
\eeq
and 
\beq\label{K_sharp_theorem_ineq}
|K_{\sharp,l}^{\nu,\mu}(u,\tau)| \ll r^{-\del} \chi_{B_2}(u) \chi_{B_1}(\tau) + \chi_{G^\nu}(u) \chi_{B_1}(\tau) + \chi_{B_2}(u)\chi_{F_u^\nu}(\tau),
\eeq
in which all implicit constants depend only on $n$ and the fixed polynomials $p_2,\ldots, p_d$. 
\end{thm}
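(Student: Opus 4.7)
The plan is to view the kernel $K_{\sharp,l}^{\nu,\mu}(u,\tau)$ as an oscillatory integral in $\sigma \in \R^{n-1}$ with the phase
\[
\Phi^{\nu,\mu}_l(u,\tau,\sigma) := P_\nu(u+z) - P_\mu(z),
\]
where $z = z(u,\tau,\sigma)$ is given explicitly and linearly (in $\sigma$) by the formulas (\ref{zformula}). Since $P_\nu$ and $P_\mu$ are polynomials of degree at most $d$ in $z$, and $z$ is affine in $\sigma$, the phase $\Phi^{\nu,\mu}_l$ is a polynomial in $\sigma$ of degree at most $d$, whose coefficients are polynomials in $(u,\tau)$ that are $\R$-linear in $(\nu,\mu)$. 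The $\sigma$-integration is over the compact set $|\sigma| \le c_0^{-1}$ enforced by (\ref{sig_bound}) and the cutoff $\Psi$. The strategy is then to apply the van der Corput lemma in a single carefully chosen coordinate $\sigma_i$: if some pure derivative $\partial_{\sigma_i}^j \Phi_l^{\nu,\mu}$ has magnitude at least $r^{\alpha}$ (for some $\alpha > 0$ and some $2 \le j \le d$) uniformly on the $\sigma$-support, integration first in $\sigma_i$ gains a factor $r^{-\alpha/j}$, and Fubini recovers the bound (\ref{K_sharp_theorem_ineq}) with exponent $\delta = \alpha/(2d)$ or similar.

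The key then is to decide, for each $(u,\tau)$, which coordinate $i$ and which differentiation order $j$ to use. The natural approach is to isolate the coefficient $c_{i,j,l}(u,\tau;\nu,\mu)$ of $\sigma_i^j$ in $\Phi_l^{\nu,\mu}$, which is polynomial in $(u,\tau)$ and linear in $(\nu,\mu)$. One would like to show that for \emph{some} pair $(i,j)$ (depending on $\nu$, but independent of $\mu$ as required by Remark \ref{remark_mu}), the coefficient $c_{i,j,l}(u,\tau;\nu,\mu)$ has size $\gtrsim \|\nu\| \asymp r$ outside a small $(u,\tau)$-set. Iterating coordinate-by-coordinate, if none of the quadratic coefficients $c_{i,2,l}$ is large, one looks at cubic coefficients $c_{i,3,l}$ (driven by $\nu_3,\mu_3$), and so on up to degree $d$. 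The exceptional set $\{(u,\tau) : |c_{i,j,l}(u,\tau;\nu,\mu)| < r^{1-\eta}\}$ is a polynomial sub-level set; provided $c_{i,j,l}(u,\tau;\nu,\mu)$ is a nontrivial polynomial in $(u,\tau)$ whose leading coefficient has size comparable to $r$, standard sub-level estimates for polynomials of bounded degree yield measure $\ll r^{-\eta'}$. This is the mechanism by which the small sets $G^\nu$ and $F_u^\nu$ in (\ref{K_small_sets}) are constructed (with $G^\nu$ arising from sub-level sets in $u$ alone, and $F_u^\nu$ from the remaining sub-level sets in $\tau$ with $u$ fixed).

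The crux is that this algebraic procedure can genuinely fail in individual coordinate directions. In the direction where $\tilde u^{(l)}/|u|$ aligns with the diagonal structure of $Q$, the quadratic contribution $\lambda_2 p_2(u+z) - \mu_2 p_2(z)$ can conspire with the surface term encoded through the change of variables (\ref{zformula}) to produce identical cancellation of the candidate leading coefficient. Indeed, if $p_2 = CQ$ for some $C\neq 0$, then completing the square shows that this cancellation occurs in \emph{every} coordinate simultaneously, and the van der Corput strategy collapses; this is the forbidden case excluded by hypothesis. The work of \S \ref{sec_K_lemmas} must therefore verify that, under the assumption $p_2 \neq CQ$, for each signature pattern $(\theta_1,\ldots,\theta_n)$ and each sector index $l$, there exists \emph{some} coordinate $i$ and some degree $j \in \{2,\ldots,d\}$ such that the polynomial $c_{i,j,l}$ is not identically zero as a polynomial in $(u,\tau,\nu)$. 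This is done by an explicit substitution: after plugging (\ref{zformula}) into $\Phi_l^{\nu,\mu}$ and expanding, one reads off $c_{i,j,l}$ as an explicit polynomial expression in the coefficients of $p_2,\ldots,p_d$, the signs $\theta_i$, and $(u,\tau,\nu,\mu)$, and then checks non-vanishing directly or via a contradiction argument that would force $p_2 = CQ$.

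The main obstacle I anticipate is precisely this last polynomial verification, for two reasons. First, the dependence of the sets $G^\nu$, $F_u^\nu$ on $\nu$ alone forces a uniform choice of $(i,j)$ that works for \emph{all} $\mu$ with $\|\mu\|\asymp r$; one therefore cannot afford to let the choice of good coordinate depend on the particular $\mu$, which rules out naive pigeonhole arguments across $(\nu,\mu)$ jointly and requires a more structural statement about the $\nu$-dependent polynomial alone. Second, the hyperbolic signatures $(\theta_i)\in\{\pm 1\}^n$ introduce genuine sign cancellations in the cross-terms between $p_2(u+z)-p_2(z)$ and the $Q$-driven substitution, so the argument must handle each signature pattern, and must isolate exactly which coordinates realize the ``$p_2$ looks like $Q$'' degeneracy. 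This is the point at which the positive-definite case $Q=|y|^2$ treated in \cite{PieYun19} was simpler (cf.\ Remark \ref{remark_PY_easier}), and where the new framework of Step 5 must do genuine work.
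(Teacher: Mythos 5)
Your framework (treat $K_{\sharp,l}^{\nu,\mu}$ as an oscillatory integral in $\sigma$, show some coefficient of the polynomial phase is $\gg r^{\alpha}$ off a small set, apply van der Corput and polynomial sub-level estimates) matches the paper's, but the proposal is missing the one mechanism that makes the theorem true as stated: a way to produce exceptional sets that are genuinely independent of $\mu$. You name this as "the main obstacle" and say it "requires a more structural statement about the $\nu$-dependent polynomial alone," but you never supply that statement, and without it the intermediate claim you rely on is false. Each coefficient of $\sigma^\gamma$ in $P_\nu(u+z)-P_\mu(z)$ decomposes (Lemma \ref{lemma_CsigmaII}) as $\sum_j(\nu_j-\mu_j)\tau^{j-|\gamma|}B_{j,\gamma}+\sum_j\nu_j D_{j,\gamma}|u|^{j-|\gamma|}+\sum_j\nu_j E_{j,\gamma}$; the first block depends on $\mu$, so the sub-level set $\{(u,\tau):|c_{i,j,l}(u,\tau;\nu,\mu)|<r^{1-\eta}\}$ moves with $\mu$, and for adversarial $\mu$ (with $\|\mu\|\approx r$) the $(\nu-\mu)$-terms can cancel the $\nu$-only terms on a large set. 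No choice of coordinate $i$ and order $j$, however clever, fixes this by itself.

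The paper's resolution is a dichotomy plus elimination that your sketch does not contain. Either some coefficient indexed by a distinguished set $\mathcal{D}^*$ is already $\geq r^{\epsilon_1}$ — in which case van der Corput gives $|K_{\sharp,l}^{\nu,\mu}|\ll r^{-\epsilon_1/d}$ pointwise and no exceptional set is needed — or all of those coefficients are $O(r^{\epsilon_1})$, and then one inverts the upper-triangular matrix $\Bbf^*(u,\tau)$ by Cramer's rule to solve for each $\nu_j-\mu_j$ as a $\mu$-independent expression plus an $O(r^{\epsilon_1})$ error, and substitutes into a \emph{different} coefficient $C[\sigma^\gamma]$ with $\gamma\in\mathcal{D}\setminus\mathcal{D}^*$ (possible because $D>L$). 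Only after this substitution does one obtain a polynomial $R^\gamma_{\nu,u}(\tau)$ depending on $\nu$ alone, to which the sub-level lemmas can be applied in $u$ and then in $\tau$ to build $G^\nu$ and $F_u^\nu$. Relatedly, your final step — "checks non-vanishing directly or via a contradiction argument that would force $p_2=CQ$" — is where the bulk of Part II actually lives: one must verify that the surviving $\tau^{d_0}$-coefficient $W^\gamma_\nu(u)$ has $\llbracket W^\gamma_\nu\rrbracket_u\gg r$, which splits into cases according to whether a $Q$-type or non-$Q$-type term of $P_\nu$ dominates, and in the hardest case (B2) requires the coordinate-by-coordinate argument showing that failure in every coordinate forces $p_2$ to be $Q$-type. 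As written, your proposal asserts the conclusion of that analysis rather than proving it.
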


\begin{remark} Fix any polynomials $p_2,\ldots, p_d$ as in Theorem \ref{thm_main_R}, possibly with some $p_j \con 0$. The supremum in Theorem \ref{thm_main_R} over $P \in \mathcal{Q}_d = \mathrm{span}_\R\{p_2,\ldots,p_d\}$ can only increase if the span of $\mathcal{Q}_d$ is enlarged by including, for each $j$ such that $p_j \con 0$, a homogeneous polynomial of degree $j$  (and not a multiple of $Q$ if $j=2$). Thus it suffices to prove Theorem \ref{thm_main_R} in the case where all $p_2,\ldots,p_d$ are nonzero; this is the case we consider for the remainder of the paper. For such fixed polynomials, Theorem \ref{thm_main_K} proves that the hypothesis of Theorem \ref{thm_K_to_S} holds, so the hypothesis of Theorem \ref{thm_S_to_IJ} holds, so the hypothesis of Theorem \ref{thm_I_to_R} holds, and finally Theorem \ref{thm_main_R} holds.   
\end{remark}

To begin the proof of the theorem, we now fix polynomials $p_j$ homogeneous of degree $j$ as above. 
Fix $1 \leq l \leq n$, and perform the $l$-th change of variables from \S \ref{sec_change_var}, so that $z$ is defined implicitly in terms of $u,\tau,\sig$ as in (\ref{zformula}), for $u/|u|$ in the support of $W_l$. (In all that follows, we only consider such $u$, since $K^{\nu,\mu}_{\sharp,l}(u,\tau)\con0$ for $u/|u|$ outside the support of $W_l$.) Without loss of generality, we consider from now on the case $l=n$ for notational simplicity.

We recall the definition of $K^{\nu,\mu}_{\sharp,n}(u,\tau)$ from (\ref{K_sharp_dfn_Psi}), as an oscillatory integral of compact support in $\sig \in \R^{n-1}$, with phase $P_\nu(u+z)-P_\mu(z)$, which is a polynomial in $\sig$. We   set a notation for the coefficient   of each monomial $\sig^\ga$ in this polynomial, by defining
\[ P_\nu(u+z)-P_\mu(z)=\sum_{0\le |\ga| \le d}C[\sigma^\ga](u,\tau)\sigma^\ga,\]
where $\ga$ varies over multi-indices $\ga \in \Z^{n-1}_{\geq 0}$.
We define a norm for the coefficients of non-constant terms in this polynomial in $\sig$:
\[ \|P_\nu(u+z)-P_\mu(z)\|_\sigma:=\sum_{1\le |\ga|\le d}|C[\sigma^\ga](u,\tau)|. \]
Fix $\ep_1>0$. By a standard van der Corput estimate, which we recall below in Lemma \ref{lemma_VdC_int},
for each $(u,\tau)$ such that   
\beq\label{P_coeff_big}
\|P_\nu(u+z)-P_\mu(z)\|_\sigma \gg r^{\ep_1},
\eeq
then $|K^{\nu,\mu}_{\sharp,n}(u,\tau)| \ll r^{-\del}$ for $\del=\ep_1/d$. We recall the estimate:

\begin{lemma}\label{lemma_VdC_int}[Prop. 2.1 of \cite{SWCarl}]
Let $Q_\lam(x) = \sum_{0 \leq |\al| \leq d} \lam_\al x^\al$ be a real-valued polynomial for $x \in \R^m$. Define $\| \lam \| = \sum_{1 \leq |\al|\leq d} |\lam_\al|$ (omitting the constant term in $Q_\lam$). For any $C^1$ function $\psi$ defined on $B_1(\R^m)$ such that $\|\psi\|_{C^1} \leq 1$, and for any convex subset $\Omega \subseteq B_1(\R^m)$, 
\[ |\int_\Omega e^{i Q_\lam(x)} \psi(x) dx| \leq_{m,d} \|\lam\|^{-1/d},\]
where the implicit constant is independent of $\psi, \Omega$.
\end{lemma}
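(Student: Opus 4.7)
The plan is to reduce the $m$-dimensional oscillatory integral to a one-dimensional van der Corput estimate via Fubini in a carefully chosen direction. First, I would normalize: if $\|\lam\| \ll_{m,d} 1$, the trivial bound $|\Omega| \ll 1$ already exceeds $\|\lam\|^{-1/d}$, so we may assume $\|\lam\|$ is large. Next, by pigeonhole on the $\lam_\al$, identify a multi-index $\al_0$ with $1 \le |\al_0| \le d$ and $|\lam_{\al_0}| \gg_{m,d} \|\lam\|$. Choose an index $j$ with $\al_{0,j} \ge 1$ and write
\[ Q_\lam(x) = \sum_{k=0}^{d} A_k(x^{(j)})\, x_j^k,\]
where $x^{(j)} \in \R^{m-1}$ consists of the coordinates other than $x_j$ and each $A_k$ is a polynomial in $x^{(j)}$ of degree $\le d-k$. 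Convexity of $\Omega$ ensures each slice $\Omega \cap \{x^{(j)} = \text{const}\}$ is an interval, so Fubini reduces the problem to understanding a one-parameter family of one-dimensional oscillatory integrals.

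The core analytic step is the 1D case: for a polynomial $R(t) = \sum_{k=0}^d c_k t^k$ with 1D norm $N = \sum_{k \ge 1}|c_k|$, we need $|\int_I e^{iR(t)}\psi(t)\,dt| \ll_d N^{-1/d}$ on any interval $I \subseteq [-1,1]$. To prove this, pick $k_0$ with $|c_{k_0}| \ge N/d$ and consider $R^{(k_0)}(t)$, a polynomial of degree $d - k_0$ whose $k=0$ coefficient is $k_0!\, c_{k_0}$, hence whose maximum coefficient is $\gg N$. Split $I$ into the sublevel set $S_\ep = \{t : |R^{(k_0)}(t)| \le \ep\}$ and its complement. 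The classical polynomial sublevel set lemma (applied via equivalence of coefficient-max and $L^\infty$ norms on $[-1,1]$ for polynomials of bounded degree) gives $|S_\ep| \ll_d (\ep/N)^{1/(d-k_0)}$, while on $I \setminus S_\ep$, which splits into $O(d)$ intervals of monotonicity of $R^{(k_0)}$, the standard higher-derivative van der Corput lemma yields $\ll_d \ep^{-1/k_0}$ per component. Choosing $\ep = N^{k_0/d}$ balances both contributions to give exactly $N^{-1/d}$.

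With the 1D case in hand, return to the $m$-dimensional integral. The inner $x_j$-integral on each slice is bounded by $\ll_d N(x^{(j)})^{-1/d}$ where $N(x^{(j)}) := \sum_{k=1}^d |A_k(x^{(j)})|$. The coefficient $A_{\al_{0,j}}(x^{(j)})$, viewed as a polynomial in the $m-1$ transverse coordinates of degree $\le d - \al_{0,j}$, contains the monomial $\lam_{\al_0} \prod_{i \ne j} x_i^{\al_{0,i}}$ with its full coefficient $\lam_{\al_0}$; hence it has a coefficient of size $\gg \|\lam\|$. A multidimensional polynomial sublevel set estimate (iterating the 1D version, or applying the Carbery--Christ--Wright type bound) then gives
\[ |\{x^{(j)} \in B_1(\R^{m-1}) : N(x^{(j)}) \le \eta\}| \ll_{m,d} (\eta/\|\lam\|)^{1/d}.\]
Splitting the outer $x^{(j)}$-integral into this sublevel set (where the trivial bound on the $x_j$-integral is used) and its complement (where $N(x^{(j)}) > \eta$ yields a slice bound of $\eta^{-1/d}$), and optimizing over $\eta$, produces the desired estimate $\ll_{m,d} \|\lam\|^{-1/d}$.

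The main obstacle is the bookkeeping of exponents through the cascade of sublevel-set/van der Corput dichotomies: one must verify that both the 1D dominant-derivative step and the multidimensional Fubini reduction preserve the sharp $1/d$ exponent. This works because each dichotomy is balanced by choosing the threshold $\ep$ (respectively $\eta$) as a power of $\|\lam\|$ designed to equalize the measure-of-bad-set contribution with the oscillatory gain on the good set; the resulting exponent of $\|\lam\|$ in both pieces coincides at $-1/d$. The uniformity of the bound in $\psi$ (with $\|\psi\|_{C^1} \le 1$) and in the convex domain $\Omega$ is automatic from the proof, since the sublevel set estimates are purely geometric and the higher-derivative van der Corput lemma requires only $\|\psi\|_{C^1}$ control.
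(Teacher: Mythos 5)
The paper does not prove this lemma; it is imported verbatim from Stein and Wainger \cite{SWCarl} (their Prop.~2.1), so there is no internal proof to compare against. Your architecture --- pigeonhole a dominant coefficient $\lam_{\al_0}$, slice in a coordinate $x_j$ with $\al_{0,j}\ge 1$ (convexity making each slice an interval), prove the one-dimensional estimate by the sublevel-set/van der Corput dichotomy, and control the transverse integral by a sublevel-set bound for the coefficient $A_{\al_{0,j}}(x^{(j)})$ --- is a legitimate route, and your one-dimensional step is correct: optimizing $(\ep/N)^{1/(d-k_0)}+\ep^{-1/k_0}$ balances exactly at $N^{-1/d}$ because the two exponents $d-k_0$ and $k_0$ sum to $d$.

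The gap is in the final multidimensional optimization. A single-threshold split of the outer integral, using the trivial bound on the sublevel set $\{N(x^{(j)})\le\eta\}$ of measure $\ll(\eta/\|\lam\|)^{1/D}$ (where $D=\deg A_{\al_{0,j}}\le d-\al_{0,j}$) and the bound $\eta^{-1/d}$ on its complement, optimizes to $\|\lam\|^{-1/(D+d)}$, which is strictly worse than $\|\lam\|^{-1/d}$ whenever $D\ge 1$; in the worst case $\al_{0,j}=1$, $D=d-1$, you get only $\|\lam\|^{-1/(2d-1)}$. The two pieces do not ``coincide at $-1/d$'' as you assert, because here the exponents sum to $D+d>d$, unlike in the one-dimensional step. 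To recover the sharp exponent you must integrate $\min\bigl(1,N(x^{(j)})^{-1/d}\bigr)$ by its distribution function (equivalently, decompose dyadically in the size of $N$): the set where $N\sim 2^{-k}\|\lam\|$ has measure $\ll 2^{-k/D}$ and contributes $\ll 2^{-k/D}\,2^{k/d}\,\|\lam\|^{-1/d}$, and the geometric series converges precisely because $D\le d-1<d$. Note that this forces you to retain the true exponent $1/D$ in the transverse sublevel-set bound rather than weakening it to $1/d$ as you wrote --- with exponent $1/d$ the dyadic sum diverges logarithmically. (An alternative that avoids the transverse sublevel set entirely is to choose a fixed direction $e$ on which the top-degree form $\sum_{|\al|=d}\lam_\al e^\al$ is comparable to $\sum_{|\al|=d}|\lam_\al|$, possible by equivalence of norms on the finite-dimensional space of degree-$d$ forms; then the $t^d$-coefficient of $t\mapsto Q_\lam(x'+te)$ is large uniformly in $x'$ and Fubini is lossless, with an induction on $d$ handling the case where lower-degree coefficients dominate.) Finally, the weaker exponent your argument actually yields would still suffice for every application in this paper, which only ever needs some $\del>0$ of decay; but it does not prove the lemma as stated.
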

Our main goal is to prove that when $\|\nu\|,\|\mu\| \approx r$, for ``most'' $(u,\tau)$, $\|P_\nu(u+z)-P_\mu(z)\|_\sigma \gg r^{\ep_1}$ so that $|K^{\nu,\mu}_{\sharp,n}(u,\tau)| \ll r^{-\del}$.
However, for certain $(u,\tau) \in \R^{n+1}$ we cannot prove that $\|P_\nu(u+z)-P_\mu(z)\|_\sigma \gg r^{\ep_1}$; it could be that for each multi-index $\ga$ the coefficient $C[\sig^\ga](u,\tau)$ is small. But these coefficients contain expressions that are polynomial in $u/|u|$ and $\tau$, so our strategy is to show that the set of such $(u,\tau)$  has small measure. This will follow from a modification of a van der Corput estimate, which we will apply to coefficients of $u$ and $\tau$ inside $C[\sig^\ga](u,\tau)$:

\begin{lemma}\label{lemma_VdC_set}[Lemma 3.3 of \cite{PieYun19}]
Let $Q_\lam(x) = \sum_{0 \leq |\al| \leq d} \lam_\al x^\al$ be a real-valued polynomial for $x \in \R^m$. Define $\llbracket \lam \rrbracket = \sum_{1 \leq |\al|\leq d} |\lam_\al| + |Q_\lam(0)|$ (including the constant term in $Q_\lam$).
For every $\rho>0$,
\[ |\{x \in B_1(\R^m) : |Q_\lam(x)| \leq \rho\}| \ll_{m,d} \rho^{1/d}\llbracket \lam \rrbracket^{-1/d} .\]
\end{lemma}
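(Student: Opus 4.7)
The plan is to combine a compactness/equivalence-of-norms argument with a multivariate Remez-type sublevel set inequality. By the scaling invariance $\llbracket c\lam\rrbracket = c\llbracket\lam\rrbracket$ and $\{|Q_{c\lam}|\le\rho\} = \{|Q_\lam|\le\rho/c\}$, we may normalize $\llbracket\lam\rrbracket = 1$, and then it suffices to prove
\[ |\{x\in B_1(\R^m): |Q_\lam(x)|\le\rho\}|\ll_{m,d}\rho^{1/d}. \]

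The real-valued polynomials of degree $\le d$ in $m$ variables form a finite-dimensional vector space, so any two norms on it are equivalent. Since $Q_\lam\not\equiv 0$ on the open set $B_1$ whenever $\lam\neq 0$, the map $\lam\mapsto \|Q_\lam\|_{L^\infty(B_1)}$ is a norm on this space; hence there exists $c_{m,d}>0$ with $\|Q_\lam\|_{L^\infty(B_1)} \ge c_{m,d}\,\llbracket\lam\rrbracket$ for all $\lam$. Here it is essential that $\llbracket\cdot\rrbracket$ includes the constant term $|Q_\lam(0)|$ so that it is genuinely a norm (otherwise nonzero constants would lie in the kernel). Under our normalization this gives $\|Q_\lam\|_{L^\infty(B_1)}\ge c_{m,d}$, so there is some $x_\ast\in B_1$ with $|Q_\lam(x_\ast)|\ge c_{m,d}$.

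Next, apply the Brudnyi--Ganzburg multivariate Remez inequality to the sublevel set $E = \{x\in B_1:|Q_\lam(x)|\le\rho\}$:
\[ \|Q_\lam\|_{L^\infty(B_1)} \le T_d\!\left(\frac{1+(1-|E|/|B_1|)^{1/m}}{1-(1-|E|/|B_1|)^{1/m}}\right)\sup_{E}|Q_\lam|, \]
where $T_d$ denotes the Chebyshev polynomial of degree $d$. Using $T_d(x)\le (2x)^d$ for $x\ge 1$, the elementary bound $(1-t)^{1/m}\le 1-t/m$ (which makes the argument of $T_d$ comparable to $m|B_1|/|E|$ when $|E|/|B_1|$ is small), and $\sup_{E}|Q_\lam|\le\rho$, one obtains $c_{m,d}\le \|Q_\lam\|_{L^\infty(B_1)} \ll_{m,d} (|B_1|/|E|)^{d}\,\rho$, which rearranges to $|E|\ll_{m,d}\rho^{1/d}$. (If $|E|/|B_1|$ is not small, the asserted bound holds trivially, since $|B_1|$ is a fixed constant.) Undoing the normalization gives the stated inequality.

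The main technical input is the Brudnyi--Ganzburg inequality, which is classical; the compactness step is elementary. An alternative route is inductive slicing via the one-dimensional sublevel set estimate (itself easily proved by scaling a 1D polynomial so its leading coefficient is $\pm 1$, factoring, and taking a union bound over intervals of size $\rho^{1/d}$ around each root), but naive slicing loses a factor in the exponent at each reduction step, so recovering the sharp $\rho^{1/d}$ exponent by that route requires additional care in how one optimizes over the "bad slice" cutoff. The Remez-based argument sidesteps this issue entirely and gives the sharp exponent in one stroke.
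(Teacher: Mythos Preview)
Your proof is correct. The paper does not actually prove this lemma; it simply quotes it as Lemma~3.3 of \cite{PieYun19} and uses it as a black box, so there is no in-paper argument to compare against.

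That said, your route via the Brudnyi--Ganzburg Remez inequality is a clean way to get the sharp exponent $1/d$ directly. The normalization step is fine, and your observation that the constant term must be included in $\llbracket\lam\rrbracket$ for the equivalence-of-norms argument to go through is exactly the point of the lemma's definition. One minor comment: you should handle the degenerate case $|E|=0$ (trivial) and, more importantly, make sure the Brudnyi--Ganzburg inequality is being applied with a set $E$ of positive measure; both are implicit in your ``if $|E|/|B_1|$ is not small'' remark, but it is worth saying explicitly. The alternative approach you sketch (slicing down to dimension one) is closer in spirit to how such estimates are often proved in the van der Corput literature, and as you note it requires more bookkeeping to avoid losing the exponent; your Remez argument avoids that entirely.
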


Furthermore, it is significant that the sets claimed to exist in (\ref{K_small_sets}) and (\ref{K_sharp_theorem_ineq}) are completely independent of $\mu$, although they are allowed to depend on $\nu$; recall Remark \ref{remark_mu}.  Thus a key point of the strategy we now employ to prove Theorem \ref{thm_main_K} is to avoid any dependence on the ``bad'' stopping-time $\mu$.

We  begin our study of $P_\nu(u+z) - P_\mu(z)$, aiming to prove (\ref{P_coeff_big}) whenever possible. 
  Fix a dimension $n \geq 2$. Recall we had a distinguished coordinate $l = n$. 
 Given a multi-index $\al=(\al_1,\ldots,\al_{n-1}) \in \mathbb{Z}_{\geq 0}^{n-1}$, 
 an index $\lambda \in \mathbb{Z}_{\geq 0}$, and given $u \in \R^{n}$, we write 
 $u^{(\al;\lambda)} = u_1^{\al_1}\cdots u_{n-1}^{\al_{n-1}} u_n^{\lambda}$. Similarly,  $\partial_{(\al;\lambda)}$ indicates taking $\al_1$ derivatives in the first coordinate, $\al_2$ derivatives in the second coordinate, and finally $\lambda$ derivatives in the $n$-th coordinate.

Recall also the notation from \S \ref{sec_change_var}, that
 \[ \tilde{u} := (\theta_1u_1,\ldots, \theta_nu_n), \]
where $\theta_i \in \{\pm 1\}$ are the eigenvalues of the fixed quadratic form $Q$. 
Write $\tu = (\tu', \tu_n)$ and $z = (z',z_n)$ with\[
z' = \frac{\tau \tu' - \tu_n \sigma}{|u|}, \quad z_n = \frac{\tau \tu_n + \tu' \cdot \sigma}{|u|}.
\]
Now $P_{\nu}(u+z) - P_{\mu}(z)$ is a polynomial in $\sigma \in \R^{n-1}$. The coefficient $C[\sigma^{\gamma}](u,\tau)$ of this polynomial can then be computed by
\begin{equation} \label{eq:C}
	C[\sigma^{\gamma}](u,\tau) 
	= \left. \frac{1}{\gamma!} \partial^{\gamma}_{\sigma} \Big[P_{\nu}(u+z) - P_{\mu}(z) \Big] \right|_{\sigma = 0}
\end{equation}
where $\gamma$ will always be a multi-index in $\Z_{\geq 0}^{n-1}$. Using Taylor expansion and the chain rule,  
\begin{align}
	\left. \frac{1}{\gamma!} \partial^{\gamma}_{\sigma} P_{\mu}(z) \right|_{\sigma = 0}
	&= \sum_{\alpha \leq \gamma} \frac{1}{\alpha!(\gamma-\alpha)!} [\partial_{(\alpha;|\gamma-\alpha|)} P_{\mu}]\Big(\frac{\tau \tu}{|u|}\Big) \Big( \frac{\tu}{|u|} \Big)^{\gamma-\alpha} \Big( \frac{-\tu_n}{|u|} \Big)^{|\alpha|} \nonumber \\
	&= \sum_{j=2}^d \mu_j \sum_{\alpha \leq \gamma} \frac{(-1)^{|\alpha|}}{\alpha!(\gamma-\alpha)!} [\partial_{(\alpha;|\gamma-\alpha|)} p_j]\Big(\frac{\tau \tu}{|u|}\Big) \frac{\tu^{(\gamma-\alpha;|\alpha|)}}{|u|^{|\gamma|}}\nonumber \\
	&= \sum_{j=2}^d \mu_j \sum_{\alpha \leq \gamma} \sum_{|\beta| \leq j-|\gamma|} \frac{(-1)^{|\alpha|}}{\alpha!(\gamma-\alpha)!\beta!} [\partial_{(\alpha;|\gamma-\alpha|)+\beta} p_j](0) \frac{\tu^{(\gamma-\alpha;|\alpha|)+\beta}}{|u|^{|\gamma|+|\beta|}} \tau^{|\beta|}. \label{eq:Pmu0}
\end{align}
Henceforth $\alpha$ and $\beta$ will always be  multi-indices in $\Z_{\geq 0}^{n-1}$ and $\Z_{\geq 0}^n$, respectively. In fact, since $p_j$ is homogeneous of degree $j$, we only need to sum over $\beta$ with $|\beta| = j-|\gamma|$, and for such $\beta$, $[\partial_{(\alpha;|\gamma-\alpha|)+\beta} p_j](0) = [\partial_{(\alpha;|\gamma-\alpha|)+\beta} p_j](u)$. (This evaluation at $u$ will better match (\ref{eq:Pnu}) below, leading to a simplified presentation of  (\ref{eq:C_expansion}).) Thus
\begin{equation} \label{eq:Pmu}
\left. \frac{1}{\gamma!} \partial^{\gamma}_{\sigma} P_{\mu}(z) \right|_{\sigma = 0} 
= \sum_{j=2}^d \mu_j \sum_{\alpha \leq \gamma} \sum_{|\beta| = j-|\gamma|} \frac{(-1)^{|\alpha|}}{\alpha!(\gamma-\alpha)!\beta!} [\partial_{(\alpha;|\gamma-\alpha|)+\beta} p_j](u) \frac{\tu^{(\gamma-\alpha;|\alpha|)+\beta}}{|u|^{|\gamma|+|\beta|}} \tau^{|\beta|}.
\end{equation}
Similarly to \eqref{eq:Pmu0},
\begin{align} 
	\left. \frac{1}{\gamma!} \partial^{\gamma}_{\sigma} P_{\nu}(u+z) \right|_{\sigma = 0}
	&= \sum_{\alpha \leq \gamma} \frac{1}{\alpha!(\gamma-\alpha)!} [\partial_{(\alpha;|\gamma-\alpha|)} P_{\nu}]\Big(u+\frac{\tau \tu}{|u|}\Big) \Big( \frac{\tu}{|u|} \Big)^{\gamma-\alpha} \Big( \frac{-\tu_n}{|u|} \Big)^{|\alpha|} \nonumber \\
	&= \sum_{j=2}^d \nu_j \sum_{\alpha \leq \gamma} \frac{(-1)^{|\alpha|}}{\alpha!(\gamma-\alpha)!} [\partial_{(\alpha;|\gamma-\alpha|)} p_j]\Big(u+\frac{\tau \tu}{|u|}\Big) \frac{\tu^{(\gamma-\alpha;|\alpha|)}}{|u|^{|\gamma|}} \nonumber \\
	&= \sum_{j=2}^d \nu_j \sum_{\alpha \leq \gamma} \sum_{|\beta| \leq j-|\gamma|} \frac{(-1)^{|\alpha|}}{\alpha!(\gamma-\alpha)!\beta!} [\partial_{(\alpha;|\gamma-\alpha|)+\beta} p_j](u) \frac{\tu^{(\gamma-\alpha;|\alpha|)+\beta}}{|u|^{|\gamma|+|\beta|}} \tau^{|\beta|}. \label{eq:Pnu}
\end{align}
It will be convenient to introduce the notation
\[
R_{p,\gamma,b}(u) := \sum_{\alpha \leq \gamma} \sum_{|\beta| = b} \frac{(-1)^{|\alpha|}}{\alpha!(\gamma-\alpha)!\beta!} [\partial_{(\alpha;|\gamma-\alpha|)+\beta} p](\tu) u^{(\gamma-\alpha;|\alpha|)+\beta}
\]
for any polynomial $p$, any multi-index $\gamma \in \Z_{\geq 0}^{n-1}$ and any $b \in \Z_{\geq 0}$. In particular, $R_{p_j,\gamma,b}(u)$ is homogeneous of degree $j$ for all $j, \gamma, b$.   The relations \eqref{eq:C}, \eqref{eq:Pmu} and \eqref{eq:Pnu} imply that
\begin{equation} \label{eq:C_expansion}
C[\sigma^{\gamma}](u,\tau)
= \sum_{j=2}^d (\nu_j-\mu_j) R_{p_j,\gamma,j-|\gamma|}\Big(\frac{\tu}{|u|}\Big) \tau^{j-|\gamma|} +  \sum_{j=2}^d \sum_{0 \leq b < j-|\gamma|} \nu_j  R_{p_j,\gamma,b}(\tu) \frac{\tau^b}{|u|^{|\gamma|+b}}.
\end{equation}
Here, it is implicit that the first sum is only over $j \geq |\ga|$.

Our next aim is to show at least one such coefficient $C[\sigma^{\gamma}](u,\tau)$ is large. We now package the system of such coefficients by defining appropriate vectors, so that the system of relations (\ref{eq:C_expansion}), which depends linearly on the $\nu_j$ and $\mu_j$, can be expressed as matrix multiplication. Write $\nu = (\nu_j)_{2 \leq j \leq d}$ and $\mu = ( \mu_j)_{2 \leq j \leq d}$ as column vectors. Now define the function $B_{j,\gamma}(u) := R_{p_j,\gamma,j-|\gamma|}(u)$ for each integer $j \geq |\gamma|$.  Let $\Bbf_{j,k}$ be the column vector $\Big(B_{j,\gamma}(\frac{\tu}{|u|}) \Big)_{|\gamma| = k}$ when $j \geq k$, indexed by multi-indices $\ga \in \Z_{\geq 0}^{n-1}$. 
(For example,  for each $j$, $\Bbf_{j,1}$ is a column vector with $n-1$ entries.)   Define the matrix
\[
\bb = \left( 
\begin{array}{ccccc}
	\tau \Bbf_{2,1} & \tau^2 \Bbf_{3,1} & \tau^3 \Bbf_{4,1} & \dots & \tau^{d-1} \Bbf_{d,1} \\
	\Bbf_{2,2} & \tau \Bbf_{3,2} & \tau^2 \Bbf_{4,2} & \dots & \tau^{d-2} \Bbf_{d,2} \\
	0 & \Bbf_{3,3} & \tau \Bbf_{4,3} & \dots & \tau^{d-3} \Bbf_{d,3} \\
	0 & 0 & \Bbf_{4,4} & \dots & \tau^{d-4} \Bbf_{d,4} \\
	0 & 0 & 0 & \ddots & \vdots \\
	0 & 0 & 0 & 0 & \Bbf_{d,d}
\end{array}
\right).
\]
This has $d-1$ columns and $\sum_{1 \leq |\ga| \leq d}1$ rows, and is independent of both $\nu,\mu$. Moreover, by construction, each entry in $\bb$ is bounded above by a constant depending only on $n,d$ and the fixed polynomials $p_2,\ldots,p_d$, uniformly for $u,\tau$ in their compact supports (the balls $B_2$ and $B_1$, respectively).  Write
$\bF$ for the column vector 
\[
\bF:= \Big( \sum_{j=2}^d \sum_{0 \leq b < j-|\gamma|} \nu_j  R_{p_j,\gamma,b}(\tu) \frac{\tau^b}{|u|^{|\gamma|+b}} \Big)_{1 \leq |\gamma| \leq d}.
\]
 Note $\bF$ depends only on $\nu$ but not on $\mu$.
Finally, write $\bc$ for the column vector $(C[\sigma^{\gamma}](u,\tau))_{1 \leq |\gamma| \leq d}$. Then as the multi-index $\ga \in \Z_{\geq 0}^{n-1}$ varies, all the expansions \eqref{eq:C_expansion} can be encapsulated in the vector identity
\begin{equation} \label{eq:key}
\bc = \bb (\nu-\mu) + \bF.
\end{equation}

Now it is crucial to avoid any dependence on $\mu$ when we estimate the size of any entry in the column vector $\bc$. One way to accomplish this is to eliminate $\bb(\nu-\mu)$ on the right hand side, and this can be done if we apply a linear operator on both sides that annihilates the image (column space) of $\bb$. If $\bb^T \bb$ is invertible, then one good choice is the orthogonal projection onto the nullspace of $\bb^T$, given by the projection matrix $I - \bb (\bb^T \bb)^{-1} \bb^T$ (note that the nullspace of $\bb^T$ is non-trivial since $\bb$ has more rows than columns). But $\bb^T \bb$ may not be invertible, and in any case it will be convenient not to have to divide by the determinant of $\bb^T \bb$ when we compute the inverse to $\bb^T \bb$. Thus we multiply \eqref{eq:key} by $\det(\bb^T \bb) I - \bb \adj(\bb^T \bb) \bb^T$ instead. (Here $\adj(\cdot)$ denotes the adjugate matrix.) Since $ [ \det(\bb^T \bb) I - \bb \adj(\bb^T \bb) \bb^T ] \bb = 0$, the vector identity (\ref{eq:key}) implies
\begin{equation} \label{eq:key_proj}
	(\det(\bb^T \bb) I - \bb \adj(\bb^T \bb) \bb^T) \bc = (\det(\bb^T \bb) I - \bb \adj(\bb^T \bb) \bb^T) \bF.
\end{equation}
Next, the idea  is to show that the vector on the right-hand side has a large entry. All entries in $\det(\bb^T \bb) I - \bb \adj(\bb^T \bb) \bb^T$ are bounded above uniformly for all $(u,\tau) \in B_2 \times B_1$, by some constant $C_0$ that depends only on  $n,d, p_2,\ldots,p_d$. Hence,  if an entry in the vector on the right-hand side is $\geq r^{\ep_1}$ for a given $\ep_1>0$, then the vector $\bc$ must have an entry that is $\gg r^{\ep_1}$, with an implicit constant depending only on $C_0$ and the length of the vector (i.e. on $n,d$). 

To study the right-hand side of (\ref{eq:key_proj}), we Taylor expand $\det(\bb^T \bb) I - \bb \adj(\bb^T \bb) \bb^T$ in powers of $\tau$. We will be able to detect large contributions to the right-hand side by studying terms that are constant or linear with respect to $\tau$, and thus we will work modulo terms that are $O(\tau^2)$ from now on. First observe that the top $(n-1)$ rows of this matrix (indexed by those $\gamma$ with $|\gamma| = 1$) are given by 
\beq\label{BB_mod}
\prod_{j=3}^d |\Bbf_{j,j}|^2 \left(
\begin{array}{ccccc}
|\Bbf_{2,2}|^2 I_{n-1} & -\tau \Bbf_{2,1} \Bbf_{2,2}^T & 0 & \dots & 0
\end{array}
\right) + O(\tau^2).
\eeq
Here we denote by $|\Bbf_{j,j}|^2= \Bbf_{j,j}^T \Bbf_{j,j}$  the norm (squared) of the vector $\Bbf_{j,j}$ defined earlier, and $I_{n-1}$ is   the $(n-1)\times(n-1)$ identity matrix.
The above expression holds because first of all,
\[
\det(\bb^T \bb) = \prod_{j=2}^d |\Bbf_{j,j}|^2 + O(\tau^2).
\]
Second, up to $O(\tau)$ terms, $\adj(\bb^T \bb)$ is a $(d-1)\times(d-1)$ dimensional diagonal matrix whose $(i,i)$-th entry is $\prod_{\substack{2 \leq j \leq d \\ j \ne i}} |\Bbf_{j,j}|^2$.
The top $(n-1)$ rows of $\bb$ are given by 
\[
\left(
\begin{array}{ccccc}
	\tau \Bbf_{2,1} & 0 & 0 & \dots & 0
\end{array}
\right) + O(\tau^2),
\]
so that all nonzero terms contain a factor of $\tau$. Thus to express the top $n-1$ rows of $\bb \adj(\bb^T \bb) \bb^T$ modulo $O(\tau^2)$ terms, it suffices to consider  $\bb^T$ modulo $O(\tau)$ (which has very few nonzero entries). 
This gives the Taylor expansion of the first $(n-1)$ rows of $\bb \adj(\bb^T \bb) \bb^T$ modulo $O(\tau^2)$ as
\[
\prod_{j=3}^d |\Bbf_{j,j}|^2 \left(
\begin{array}{ccccc}
0_{n-1} & -\tau \Bbf_{2,1} \Bbf_{2,2}^T & 0 & \dots & 0
\end{array}
\right) + O(\tau^2),
\]
in which $0_{n-1}$ denotes an $(n-1)\times (n-1)$  matrix of zeroes.
Assembling these facts, the top $(n-1)$ rows of $\det(\bb^T \bb) I - \bb \adj(\bb^T \bb) \bb^T$ modulo $O(\tau^2)$ are as claimed in (\ref{BB_mod}). 

We may enumerate the $n-1$ multi-indices $\ga$ with $|\ga|=1$ by $e_1,\ldots, e_m$ for $1 \leq m \leq n-1$ where $e_m$ denotes the standard unit vector in $\Z^{n-1}$. 
It follows that for $1 \leq m \leq n-1$, the $m$-th entry in the column vector that comprises the right hand side of \eqref{eq:key_proj} is 
\begin{align*}
&\prod_{j=2}^d |\Bbf_{j,j}|^2 \sum_{j=2}^d \nu_j  R_{p_j,e_m,0}(\tu) \frac{1}{|u|} \\
&+ \tau \prod_{j=3}^d |\Bbf_{j,j}|^2 \left(|\Bbf_{2,2}|^2 \sum_{j=2}^d \nu_j R_{p_j,e_m,1}(\tu) \frac{1}{|u|^2}- B_{2,e_m}(\frac{\tu}{|u|}) \Bbf_{2,2}^T \sum_{j=2}^d \nu_j \mathbf{R}_{p_j,2,0} \frac{1}{|u|^2} \right) + O(\tau^2)
\end{align*}
where $\mathbf{R}_{p_j,2,0} := (R_{p_j,\gamma,0}(\tu))_{|\gamma|=2}$ is a vector. Using homogeneity, we rewrite this as
\begin{align}
	\frac{1}{|u|^{d(d+1)-1}}& \left(  \prod_{j=2}^d \sum_{|\gamma|=j} |B_{j,\gamma}(\tu)|^2 \right) \sum_{j=2}^d \nu_j  R_{p_j,e_m,0}(\tu) \label{R_expression}\\
	&+ \frac{\tau}{|u|^{d(d+1)}} \left(  \prod_{j=3}^d \sum_{|\gamma|=j} |B_{j,\gamma}(\tu)|^2 \right) \nonumber \\
   & \cdot \sum_{j=2}^d \nu_j \left( \sum_{|\gamma|=2} |B_{2,\gamma}(\tu)|^2 R_{p_j,e_m,1}(\tu) - \sum_{|\gamma|=2} B_{2,\gamma}(\tu) R_{p_j,\gamma,0}(\tu) B_{2,e_m}(\tu) \right)\nonumber \\
 & + O(\tau^2). \nonumber
\end{align}
To finish the proof of Theorem \ref{thm_main_K}, we aim to show that if $\|\nu\|  \approx r$, then there exists an index $m$ with $1 \leq m \leq n-1$ such that the above expression is $\gg r^{\ep_1}$ (for a fixed $\ep_1>0$) for all but a small exceptional set of $u,\tau$.  Henceforward, we will say that   $p_j$ is $Q$-type if there exists a constant $C$ such that $p_j(y) =C Q(y)^{j/2};$  necessarily $j$ is then even.
We  will employ the following three claims. 
\begin{lemma} \label{lem:1}
	For a given $2 \leq j \leq d$, if $p_j \not\equiv 0$, then $\sum_{|\gamma|=j} |B_{j,\gamma}(u)|^2  \not\equiv 0$.
\end{lemma}
\begin{lemma} \label{lem:2}
For a given   $2 \leq j \leq d$, if $p_j\not\con 0$ is not $Q$-type then there exists $1 \leq m \leq n-1$ so that $R_{p_j,e_m,0}(u) \not\equiv 0$.
\end{lemma}
\begin{lemma} \label{lem:nonvanishing}
For a given (even)  $4 \leq j \leq d$, if $p_j(y) \not\equiv 0$ is $Q$-type, and $p_2\not\con 0$ is not $Q$-type, then there exists $1 \leq m \leq n-1$ such that
\begin{equation} \label{eq:lem3}
\sum_{|\gamma|=2} |B_{2,\gamma}(u)|^2 R_{p_j,e_m,1}(u) - \sum_{|\gamma|=2} B_{2,\gamma}(u) R_{p_j,\gamma,0}(u) B_{2,e_m}(u)  \not\equiv 0.
\end{equation}
\end{lemma}

\subsection{Completing the proof of Theorem \ref{thm_main_K}}
The proofs of the lemmas will be given later; for now we assume them and finish the proof of Theorem \ref{thm_main_K}. Given that $r \leq \|\nu\|  \leq 2r$, there is at least one index $j$ for which $|\nu_j| \geq c r$ for a fixed small constant  $c$ ($c =1/(d-1)$ will do).  Given $\nu$, there are two cases:  there is an index $j$ for which $|\nu_j| \geq c r$ and $p_j$ is not $Q$-type (case A), or for all indices $j$ with $|\nu_j | \geq c r$, $p_j$ is $Q$-type (case B).  Note that under the hypothesis of the theorem that $p_2\not\con 0$ is not $Q$-type, if $d=2$ or $d=3$ then only case A can occur.

We first consider case A, so there exists an index $2 \leq j^*\leq d$ with $|\nu_{j^*}| \geq cr$ and $p_{j^*}$ not $Q$-type, so that by Lemma \ref{lem:2} we may choose an index $1\leq m \leq n-1$ for which $R_{p_{j^*},e_m,0}\not\con 0$. We fix this $m$ for the remainder of the argument for case A.  For this $m$, the expression (\ref{R_expression}), call it $S^m_{\nu,u}(\tau)$, is a polynomial in $\tau$ of degree $D \ll_{n,d} 1$, whose $\llbracket S^m_{\nu,u}\rrbracket_\tau$ norm, in the notation of Lemma \ref{lemma_VdC_set}, is bounded below by the absolute value of the constant term with respect to $\tau$. With this in mind, we define a polynomial in $u$ with degree $E \ll_{d} 1$ by
\[
W^m_\nu(u)=\left(  \prod_{j=2}^d \sum_{|\gamma|=j} |B_{j,\gamma}(\tu)|^2 \right) \sum_{j=2}^d \nu_j  R_{p_j,e_m,0}(\tu).
\]
As a consequence of Lemma \ref{lem:1}, the leading factor is a nonzero polynomial in $u$ (with coefficients independent of $\nu,r$). 

Fix $0<\ep_1< \ep_2<1$. We define a set 
\[ G^\nu = \{ u \in B_2(\R^n): |W^m_\nu(u)| \leq r^{\ep_2}\}.\]
For each $u\not\in G^\nu$, define the set
\[ F^\nu_u = \{\tau \in B_1(\R) : |S^m_{\nu,u}(\tau)| \leq r^{\ep_1}\}.\]
For each $u \in G^\nu$, define $F^\nu_u = \emptyset.$ Now for all $(u,\tau) \in B_2(\R^n) \times B_1(\R)$ such that $u \not\in G^\nu$ and $\tau \not\in F^\nu_u$, then $|S^m_{\nu,u}(\tau)| \geq r^{\ep_1}$, which is equivalent to the statement that the $m$-th entry on the right-hand side of (\ref{eq:key_proj}) is $\geq r^{\ep_1}$ and consequently there exists an entry in the vector $\mathbf{C}$ that is $\gg r^{\ep_1}$. That is to say, there is some multi-index $\ga$ such that $|C[\sig^\ga](u,\tau)| \gg r^{\ep_1}$, and hence by Lemma \ref{lemma_VdC_int}, $|K_{\sharp,n}^{\nu,\mu}(u,\tau)| \ll r^{-\ep_1/d}$. 

By construction, $\llbracket S^m_{\nu,u}\rrbracket_\tau \geq |u|^{-(d(d+1)-1)} |W^m_\nu(u)| \gg |W^m_\nu(u)|$ for $u \in B_2$. Hence if $u\not\in G^\nu$, $\llbracket S^m_{\nu,u}\rrbracket_\tau \gg r^{\ep_2}$ so that by Lemma \ref{lemma_VdC_set}, $|F^\nu_u| \ll r^{-(\ep_2-\ep_1)/D}.$
Finally, the set $G^\nu$ is also small. Indeed,  since each $R_{p_j,e_m,0}$ is homogeneous of  degree $j$ and $m$ has been fixed so that $R_{p_{j^*},e_m,0}\not\con0$,  we can bound $\llbracket W_\nu^m\rrbracket_u \geq |\nu_{j^*}| \gg r$. Consequently, by Lemma \ref{lemma_VdC_set}, $|G^\nu| \ll r^{-(1-\ep_2)/E}.$ This completes the deduction of Theorem \ref{thm_main_K} in case A.

Next consider case $B$, so that for all $j$ with $|\nu_j| \geq cr$, $p_j$ is $Q$-type. As noted, we may assume in this case that $d \geq 4$, and in particular there exists $j^* \geq 4$ such that $|\nu_{j^*}| \geq cr$ and $p_{j^*}$ is $Q$-type. The only change we make in the above argument is that now we define the polynomial 
\begin{multline*}
W_\nu^m(u)= \left(  \prod_{j=3}^d \sum_{|\gamma|=j} |B_{j,\gamma}(\tu)|^2 \right) \cdot \\
\sum_{j=2}^d \nu_j \left( \sum_{|\gamma|=2} |B_{2,\gamma}(\tu)|^2 R_{p_j,e_m,1}(\tu) - \sum_{|\gamma|=2} B_{2,\gamma}(\tu) R_{p_j,\gamma,0}(\tu) B_{2,e_m}(\tu) \right)
\end{multline*}
extracted from the term in $S_{\nu,u}^m$ that is linear in $\tau$; its degree in $u$ we again denote by $E \ll_d 1$. By Lemma \ref{lem:nonvanishing}, we may fix an index $1 \leq m \leq n-1$ for which the factor multiplying $\nu_{j^*}$ in the above expression for $W_\nu^m(u)$ is not the zero polynomial. With this choice for $m$, $\llbracket S^m_{\nu,u}\rrbracket_\tau \gg |W_\nu^m(u)|$, and the argument above proceeds verbatim, concluding the deduction of Theorem \ref{thm_main_K} in case B.

 \begin{remark}\label{remark_no_linear}
 Suppose that a linear polynomial $p_1 \not\con 0$ is included in the fixed set $p_1,p_2,\ldots,p_d$; in this case the sums in the expansion  (\ref{eq:C_expansion}) are nominally indexed from $j=1$. Any argument must allow for the case  that $\|\nu\|,\|\mu\|\approx r$ and $|\nu_1|,|\mu_1| \gg r$ while $|\nu_j|\approx |\mu_j|\approx 0$ for all $j \geq 2$. The only potentially large coefficients are then $C[\sig^\ga](u,\tau)$ with $|\ga|=1$, say $\ga=e_m$ for $1 \leq m \leq n-1$, in which case $C[\sig_m](u,\tau)=(\nu_1- \mu_1)R_{p_1,e_m,0}(\tilde{u}/|u|)$. This   could vanish identically for each $1 \leq m \leq n-1$  when $\nu_1\approx \mu_1$, and there is no remaining term that depends only on $\nu_1$ and can be exploited. Analogously, the projection carried out in (\ref{eq:key_proj}) zeroes out the entire coefficient, and the argument cannot proceed. This illustrates why Theorem \ref{thm_main_K}, and hence the main Theorem \ref{thm_main_R}, prohibits a linear phase contribution.
\end{remark}

 We now finish the proof of Theorem \ref{thm_main_K} by proving the three lemmas.
 
\subsection{Proof of Lemma~\ref{lem:1}}
Recall that by definition $B_{j,\gamma}(u)=R_{p_j,\gamma,j-|\gamma|}(u)$. If $B_{j,\gamma}(u) \equiv 0$ for all $\gamma$ with $|\gamma| = j$, then $R_{p_j,\gamma,0}(u) \equiv 0$ for all such $\gamma$. Each such $R_{p_j,\gamma,0}(u)$ is a sum of monomials in $u$ of distinct multi-index exponents, whose coefficients must all vanish; this  implies $\partial_{(\alpha;j-|\alpha|)}p_j \con 0$ for all $\alpha$ with $|\alpha| \leq j$. Hence $p_j \equiv 0$, and the lemma is proved.

\subsection{Proof of Lemma~\ref{lem:2}}
Suppose to the contrary that
\begin{equation} \label{eq:hypo}
0 \equiv R_{p_j,e_m,0}(u) = [\partial_{e_n} p_j](\tu) u_m - [\partial_{e_m} p_j](\tu) u_n,
\end{equation}
for all $1 \leq m \leq n-1$. Then $u_n$ divides $\partial_{e_n} p_j(u)$ and $u_m$ divides $\partial_{e_m} p_j(u)$ for $1 \leq m \leq n-1$. Thus one may write $\partial_{e_l} p_j(u) = \tu_l q_l(u)$ for some polynomial $q_l(u)$, for each $1 \leq l \leq n$. Plugging this back into \eqref{eq:hypo} gives $q_1(\tu) = \dots = q_n(\tu)$ for all $u$, so $\nabla p_j(u)$ is parallel to the vector $\tu$ at every $u \in \R^n$. In particular, since $\tu$ is a normal vector to the level sets of $Q(u)$, this shows $p_j(u)$ is constant on such level sets. Now the level set $\{u \in \R^n \colon Q(u) = 1\}$ can be written as the disjoint union of finitely many connected components $\Sigma_{+,a}$ indexed by $a$, and similarly the level set $\{u \in \R^n \colon Q(u) = -1\}$ can be written as the disjoint union of finitely many connected components $\Sigma_{-,b}$ indexed by $b$. The set of positive dilates of each of $\Sigma_{+,a}$ and $\Sigma_{-,b}$ generates a cone in $\R^n$, say $\Gamma_{+,a}$ and $\Gamma_{-,b}$ respectively. The space $\R^n$ is the disjoint union of the zero set of $Q$ with $\bigcup_a \Gamma_{+,a}$ and $\bigcup_b \Gamma_{-,b}$. We know for each $b$, there exists a constant $c_{-,b}$ such that $p_j(u) = c_{-,b}$ whenever $u \in \Sigma_{-,b}$. Now let $u \in \Gamma_{-,b}$. Writing temporarily $t = |Q(u)|^{1/2}$ so that $\frac{u}{t} \in \Sigma_{-,b}$, this shows 
\[p_j(u) = t^j p_j(\frac{u}{t}) = c_{-,b} t^j = c_{-,b} |Q(u)|^{j/2} = c_{-,b} (-1)^{j/2} Q(u)^{j/2}.\]
This proves $p_j(u) = c_{-,b} (-1)^{j/2} Q(u)^{j/2}$ for all $u \in \Gamma_{-,b}$. Moreover, since by hypothesis $p_j$ has real coefficients, $j$ must be even. Similarly there are constants so that $p_j(u) = c_{+,a} Q(u)^{j/2}$ for all $u \in \Gamma_{+,a}$. By continuity of the polynomial $p_j$, we must have $c_{-,b}(-1)^{j/2} = c_{+,a}$ for all $a$ and $b$. Thus $p_j(u)$ is a constant multiple of $Q(u)^{j/2}$, namely, $Q$-type. The lemma is proved.
 
\subsection{Proof of Lemma~\ref{lem:nonvanishing}}\label{sec_Ksharp_nonvanishing}
First we claim that for any even index $2 \leq j \leq d$, and for each fixed $1 \leq m \leq n-1$, the left-hand side of \eqref{eq:lem3} factors into precisely the expression
\beq\label{dep_only_2}
\frac{j}{2} Q(\tu)^{\frac{j}{2}-1} \sum_{|\gamma|=2} [|R_{p_2,\gamma,0}(u)|^2 R_{Q,e_m,1}(u) - R_{p_2,\gamma,0}(u) R_{Q,\gamma,0}(u) R_{p_2,e_m,1}(u)].
\eeq
\begin{remark}\label{remark_no_quad}
This has two interesting features. First, the only dependence on $j$ is in the first factor $(j/2)Q(\tilde{u})^{j/2-1}$; whether the expression is identically zero thus hinges  upon the sum over $|\ga|=2$, which depends only on $m$ and the functions $p_2, Q$. Second, (\ref{dep_only_2}) is identically zero (for all $m$) if $p_2$ is $Q$-type. This illustrates why Theorem \ref{thm_main_K}, and hence the main Theorem \ref{thm_main_R}, prohibits $p_2(y) \con C Q(y)$.  
\end{remark}
Once (\ref{dep_only_2}) has been verified, Lemma~\ref{lem:nonvanishing} is an immediate consequence of the following:
\begin{lemma} \label{lem:nonvanishing2}
Let $p_2\not\con 0$ be a homogeneous polynomial of degree $2$ on $\R^n$. If
\begin{equation} \label{eq:nonvanishing}
\sum_{|\gamma|=2} |R_{p_2,\gamma,0}(u)|^2 R_{Q,e_m,1}(u) \equiv \sum_{|\gamma|=2} R_{p_2,\gamma,0}(u) R_{Q,\gamma,0}(u) R_{p_2,e_m,1}(u)
\end{equation}
for all $1 \leq m \leq n-1$, then $p_2(u)$ is a multiple of $Q(u)$.
\end{lemma}
We verify (\ref{dep_only_2}) by directly computing each quantity in (\ref{eq:lem3}). We denote the coefficients of $p_2$ by setting
\begin{equation} \label{eq:p2def}
p_2(u) = \sum_{1 \leq r, s \leq n} c_{r,s} u_{r} u_{s}
\end{equation}
with $c_{r,s} = c_{s,r}$. First we compute terms involving $|\ga|=1$, which we denote by $e_m$ with $1 \leq m \leq n-1$.
For each $1 \leq m \leq n-1$, by definition
\[
R_{p_j,e_m,1}(u) =  \sum_{r=1}^n \Big( [\partial_{e_n+e_{r}} p_j](\tu) u_m u_{r} - [\partial_{e_m+e_{r}} p_j](\tu) u_n u_{r} \Big).
\]
In particular,
\[
\begin{split}
B_{2,e_m}(u) &= R_{p_2,e_m,1}(u) =  
\sum_{r=1}^n \Big( [\partial_{e_n+e_{r}} p_2](\tu) u_m u_{r} - [\partial_{e_m+e_{r}} p_2](\tu) u_n u_{r} \Big) \\
&= \sum_{\substack{1 \leq r \leq n \\ r \ne n}} 2 c_{r,n} u_m u_{r} -  \sum_{\substack{1 \leq r \leq n \\ r \ne m}} 2 c_{r,m} u_n u_{r} + 2 (c_{n,n} - c_{m,m}) u_m u_n.
\end{split}
\]
If $p_j = Q(u)^{j/2}$ (for $j$ even), then by the chain and product rules,  
\begin{align*}
\quad R_{p_j,e_m,1}(u)
&= 
\sum_{r = 1}^n \Big( j (j-2) Q(\tu)^{\frac{j}{2}-2} u_{r} u_n u_m u_{r} - j (j-2) Q(\tu)^{\frac{j}{2}-2} u_m u_{r} u_n u_{r} \Big) \\
 &\quad \quad + j Q(\tu)^{\frac{j}{2}-1} \theta_n u_m u_n - j Q(\tu)^{\frac{j}{2}-1} \theta_m u_n u_m  \\
&= j Q(\tu)^{\frac{j}{2}-1} (\theta_n - \theta_m) u_m u_n = \frac{j}{2} Q(\tu)^{\frac{j}{2}-1} R_{Q,e_m,1}(u).
\end{align*}
Next we compute terms involving $|\ga|=2$, which take the form $2e_\ell$ or $e_\ell + e_{\ell'}$ for $\ell \neq \ell'$. For $1 \leq \ell \leq n-1$,
\[
\begin{split}
B_{2,2e_{\ell}}(u) 
&= R_{p_2,2e_{\ell},0}(u)
= \sum_{a=0}^2 \frac{(-1)^{a}}{a! (2-a)!} [\partial_{a e_{\ell} + (2-a) e_n} p_2](\tu) u_{\ell}^{2-a} u_n^a \\
&= c_{n,n} u_{\ell}^2 - 2 c_{{\ell},n} u_{\ell} u_n + c_{{\ell},{\ell}} u_n^2
\end{split}
\]
and for $1 \leq {\ell} \ne \ell' \leq n-1$,
\[
\begin{split}
B_{2,e_{\ell}+e_{\ell'}}(u) 
&= R_{p_2,e_{\ell}+e_{\ell'},0}(u)\\
&= \sum_{0 \leq a_1 \leq 1} \sum_{0 \leq a_2 \leq 1} (-1)^{a_1+a_2}[\partial_{a_1 e_{\ell} + a_2 e_{\ell'} + (2-a_1-a_2) e_n} p_2](\tu) u_{\ell}^{1-a_1} u_{\ell'}^{1-a_2} u_n^{a_1 + a_2} \\
&= 2(c_{n,n} u_{\ell} u_{\ell'} - c_{{\ell},n} u_{\ell'} u_n - c_{\ell',n} u_{\ell} u_n + c_{{\ell},\ell'} u_n^2).
\end{split}
\]
Similarly, if $p_j = Q(u)^{j/2}$, for $1 \leq {\ell} \leq n-1$,
\[
\begin{split}
R_{p_j,2e_{\ell},0}(u) &:= \sum_{a=0}^2 \frac{(-1)^{a}}{a! (2-a)!} [\partial_{a e_{\ell} + (2-a) e_n} p_j](\tu) u_{\ell}^{2-a} u_n^a   \\
&= \frac{1}{2} \left( j(j-2) Q(\tu)^{\frac{j}{2}-2} u_{\ell}^2 u_n^2 + j Q(\tu)^{\frac{j}{2}-1} \theta_{\ell} u_n^2 \right) - j(j-2) Q(\tu)^{\frac{j}{2}-2} u_{\ell}^2 u_n^2 \\
&\quad + \frac{1}{2} \left( j(j-2) Q(\tu)^{\frac{j}{2}-2} u_{\ell}^2 u_n^2 +  j Q(\tu)^{\frac{j}{2}-1} \theta_n u_{\ell}^2 \right) \\
&= \frac{j}{2} Q(\tu)^{\frac{j}{2}-1} (\theta_{\ell} u_n^2 + \theta_n u_{\ell}^2) = \frac{j}{2} Q(\tu)^{\frac{j}{2}-1} R_{Q,2e_{\ell},0}(u)
\end{split}
\]
and for $1 \leq {\ell} \ne \ell' \leq n-1$,
\[
\begin{split}
R_{p_j,e_{\ell}+e_{\ell'},0}(u) 
&= \sum_{0 \leq a_1 \leq 1} \sum_{0 \leq a_2 \leq 1} (-1)^{a_1+a_2}[\partial_{a_1 e_{\ell} + a_2 e_{\ell'} + (2-a_1-a_2) e_n} p_j](\tu) u_{\ell}^{1-a_1} u_{\ell'}^{1-a_2} u_n^{a_1 + a_2} \\
&= j(j-2) Q(\tu)^{\frac{j}{2}-2} u_n^2 u_{\ell} u_{\ell'} + j Q(\tu)^{\frac{j}{2}-1} \theta_n u_{\ell} u_{\ell'}\\
&\qquad - j(j-2) Q(\tu)^{\frac{j}{2}-2} u_{\ell} u_n u_{\ell'} u_n - j(j-2) Q(\tu)^{\frac{j}{2}-2} u_{\ell'} u_n u_{{\ell}} u_n \\
&\qquad + j(j-2) Q(\tu)^{\frac{j}{2}-2} u_{\ell} u_{\ell'} u_n^2 \\
&= j Q(\tu)^{\frac{j}{2}-1} \theta_n u_{\ell} u_{\ell'} = \frac{j}{2} Q(\tu)^{\frac{j}{2}-1} R_{Q,e_{\ell}+e_{\ell'},0}(u).
\end{split}
\]
Inserting these expansions into \eqref{eq:lem3} confirms that it factors into the expression claimed in (\ref{dep_only_2}).

\subsection{Proof of Lemma \ref{lem:nonvanishing2}}
Now all that remains is to prove Lemma \ref{lem:nonvanishing2}.
Suppose $p_2$ has coefficients notated as in \eqref{eq:p2def}, an index $1 \leq m \leq n-1$ is fixed, and suppose that  \eqref{eq:nonvanishing} holds, namely
\begin{multline}\label{eq:vanish2}
[ \sum_{\ell = 1}^{n-1} (c_{n,n} u_{\ell}^2 - 2 c_{{\ell},n} u_{\ell} u_n + c_{{\ell},{\ell}} u_n^2)^2   
\\
  +  \sum_{1 \leq \ell < \ell' \leq n-1}  [2 (c_{n,n} u_{\ell} u_{\ell'} - c_{{\ell},n} u_{\ell'} u_n - c_{\ell',n} u_{\ell} u_n + c_{{\ell},\ell'} u_n^2)]^2 ] (\theta_n-\theta_m) u_m u_n 
  \end{multline}
is identically equal to 
\begin{multline}\label{eq:vanish2right}
[ \sum_{\ell = 1}^{n-1} (c_{n,n} u_{\ell}^2 - 2 c_{{\ell},n} u_{\ell} u_n + c_{{\ell},{\ell}} u_n^2)  (\theta_{\ell} u_n^2  + \theta_n u_{\ell}^2)  
 \\
   + \sum_{1 \leq \ell < \ell' \leq n-1} 2 (c_{n,n} u_{\ell} u_{\ell'} - c_{{\ell},n} u_{\ell'} u_n - c_{\ell',n} u_{\ell} u_n + c_{{\ell},\ell'} u_n^2)(2\theta_n u_{\ell} u_{\ell'})  ]   \\
  \cdot \Big( \sum_{\substack{1 \leq r \leq n \\ r \ne n}}  c_{r,n} u_m u_{r} -  \sum_{\substack{1 \leq r \leq n \\ r \ne m}}  c_{r,m} u_n u_{r} +  (c_{n,n} - c_{m,m}) u_m u_n \Big). 
\end{multline}
By comparing the coefficients of $u_m^6$ and $u_m^5 u_n$ in these two identical expressions, we obtain
\[
\begin{split}
0 &= c_{n,n} \theta_n c_{m,n} \\
c_{n,n}^2 (\theta_n-\theta_m) &= -2 c_{m,n} \theta_n c_{m,n} + c_{n,n} \theta_n (c_{n,n} - c_{m,m}).
\end{split}
\]
These relations are equivalent to 
\begin{align}
c_{m,n} c_{n,n} &= 0 \label{eq:cmn-1}\\
2 \theta_n c_{m,n}^2 &= c_{n,n} (\theta_m c_{n,n} - \theta_n c_{m,m}). \label{eq:cnn-1}
\end{align}
From this we derive
\begin{align}
c_{m,n} &= 0. \label{eq:cmn0}
\end{align}
(In fact, from \eqref{eq:cmn-1} we must have $c_{m,n} = 0$ or $c_{n,n} = 0$. In the latter case, the equation \eqref{eq:cnn-1} implies $c_{m,n} = 0$ as well, so we have \eqref{eq:cmn0}.)

As a consequence, if \eqref{eq:vanish2} and \eqref{eq:vanish2right} are identical for every $1 \leq m \leq n-1$, then   $c_{m,n} = 0$ for every $1 \leq m \leq n-1$. Applying this in \eqref{eq:vanish2} and \eqref{eq:vanish2right}, and again assuming they are identical expressions, we obtain that for every $1 \leq m \leq n-1$,
\begin{multline} \label{eq:vanish3}
\left[ \sum_{\ell = 1}^{n-1} (c_{n,n} u_{\ell}^2 + c_{{\ell},{\ell}} u_n^2)^2 
+ \sum_{1 \leq \ell < \ell' \leq n-1} 4 (c_{n,n} u_{\ell} u_{\ell'} + c_{{\ell},\ell'} u_n^2)^2 \right] (\theta_n-\theta_m) u_m u_n \\ 
\equiv  \left[ \sum_{\ell = 1}^{n-1} (c_{n,n} u_{\ell}^2 + c_{{\ell},{\ell}} u_n^2)  (\theta_{\ell} u_n^2  + \theta_n u_{\ell}^2)
+ \sum_{1 \leq \ell < \ell' \leq n-1} 2 (c_{n,n} u_{\ell} u_{\ell'} + c_{{\ell},\ell'} u_n^2)(2\theta_n u_{\ell} u_{\ell'}) \right] \\
\cdot \Big( -  \sum_{\substack{1 \leq r \leq n-1 \\ r \ne m}}  c_{r,m} u_n u_{r} +  (c_{n,n} - c_{m,m}) u_m u_n \Big).
\end{multline}
 We also have, from \eqref{eq:cnn-1}, that $c_{n,n} (\theta_m c_{n,n} - \theta_n c_{m,m}) = 0$ for all $1 \leq m \leq n-1$. Either $c_{n,n} = 0$ or the second factor is zero; in the former case, then by considering the coefficient of $u_m^3 u_n^3$ in \eqref{eq:vanish3}, we obtain
$
0 = c_{m,m} \theta_n (-c_{m,m})
$
which implies $c_{m,m} = 0$ for all $1 \leq m \leq n-1$. So either way, we must have
\begin{equation} \label{eq:cnncmm2}
\theta_m c_{n,n} = \theta_n c_{m,m}
\end{equation}
for all $1 \leq m \leq n-1$.

To proceed further, \eqref{eq:cnncmm2} implies 
\begin{align*}
    \theta_n^2(c_{n,n} u_m^2 + c_{m,m} u_n^2)^2 & = (\theta_n c_{n,n} u_m^2 + \theta_n c_{m,m} u_n^2)^2 \\
  &   = (\theta_n c_{n,n} u_m^2 + \theta_m c_{n,n} u_n^2)^2 = c_{n,n}^2 (\theta_n u_m^2 + \theta_m u_n^2)^2.
    \end{align*}
    Similarly, (\ref{eq:cnncmm2}) implies
\[
(c_{n,n} u_m^2 + c_{m,m} u_n^2) (c_{n,n} - c_{m,m}) = c_{n,n}^2 (\theta_n u_m^2 + \theta_m u_n^2) (\theta_n - \theta_m).
\]
We now set $u_1, \dots, u_{n-1}$ except $u_m$ to zero in \eqref{eq:vanish3}, and plug these two relations into (\ref{eq:vanish3}). We obtain
\[
\sum_{1 \leq \ell < \ell' \leq n-1} 4 c_{\ell,\ell'}^2 u_n^4 (\theta_n-\theta_m) u_m u_n \equiv 0;
\]
repeating this process extracts this relation for each $1 \leq m \leq n-1$. If 
\begin{equation} \label{eq:extra}
\theta_n \ne \theta_m \quad \text{for some $1 \leq m \leq n-1$},
\end{equation} 
then the above implies 
\begin{equation} \label{eq:cll'}
c_{\ell,\ell'} = 0 \quad \text{for all $1 \leq \ell < \ell' \leq n-1$}.
\end{equation}
In this case, from \eqref{eq:cmn0}, \eqref{eq:cnncmm2} and \eqref{eq:cll'}, $p_2(u)$ is a multiple of $Q(u)$. 
Lemma \ref{lem:nonvanishing2} has now been established under the additional hypothesis \eqref{eq:extra}, namely that $Q(y) = \sum_i \theta_i y_i^2 \neq \pm |y|^2$. 

The remaining case is when   $\theta_1 = \dots = \theta_n$ (that is to say, $Q(y) = \pm |y|^2$). This can also be dealt with, as follows.   If $\theta_1 = \dots = \theta_n$, then \eqref{eq:vanish3} becomes the statement that for all $1 \leq m \leq n-1$,
\begin{multline} \label{eq:20}
0 \equiv  \left[ \sum_{\ell = 1}^{n-1} (c_{n,n} u_{\ell}^2 + c_{{\ell},{\ell}} u_n^2)  ( u_n^2  +  u_{\ell}^2)
+ \sum_{1 \leq \ell < \ell' \leq n-1} 2 (c_{n,n} u_{\ell} u_{\ell'} + c_{{\ell},\ell'} u_n^2)(2 u_{\ell} u_{\ell'}) \right] \\
\cdot \Big( -  \sum_{\substack{1 \leq r \leq n-1 \\ r \ne m}}  c_{r,m} u_n u_{r} +  (c_{n,n} - c_{m,m}) u_m u_n \Big).
\end{multline}
Here we consider two cases. 
 Suppose there exists an index $m$ for which \eqref{eq:20} holds because the first factor on the right-hand side is identically zero.
 The coefficient of $u_\ell^4$ (which must necessarily vanish) shows that $c_{n,n}=0$, while the coefficient of $u_n^2u_\ell^2$ shows that $c_{n,n}+c_{\ell,\ell}=0$ and hence $c_{\ell,\ell}=0$ for each $1 \leq \ell \leq n-1$. Finally the coefficient of $u_n^2u_\ell u_{\ell'}$ shows that $c_{\ell,\ell'}=0$ for all $1 \leq \ell < \ell' \leq n-1$. Together with \eqref{eq:cmn0}, this shows that $p_2 \equiv 0$, which is a multiple of $Q(u)$ (and is eliminated by a hypothesis of the lemma). 

The last case to consider is that for each $1 \leq m \leq n-1$, the second factor in \eqref{eq:20} vanishes identically. The coefficient of $u_mu_n$ (which must necessarily vanish) implies $c_{m,m} = c_{n,n}$ for all $1 \leq m \leq n-1$, and the coefficient of $u_nu_r$ shows $c_{r,m} = 0$ for all $1 \leq m \leq n-1$, $1 \leq r \leq n-1$ with $r \ne m$. Together with \eqref{eq:cmn0}, this implies $p_2(u)$ is a multiple of $|u|^2 = \pm  Q(u)$. This completes the proof of Lemma \ref{lem:nonvanishing2}.
The proof of Theorem \ref{thm_main_K} is now complete, and consequently Theorem \ref{thm_main_R} is proved.


\section*{Acknowledgements}
Anderson has been partially supported by NSF CAREER DMS-2237937, DMS-2231990, DMS-1502464, and an NSF Graduate Research Fellowship.  She thanks Andreas Seeger for helpful conversations related to this project. Maldague is supported by the National Science Foundation under Award No. 2103249.
Pierce has been partially supported by NSF  CAREER grant DMS-1652173, DMS-2200470, a Sloan Research Fellowship, a Joan and Joseph Birman Fellowship, a Simons Fellowship, and a Guggenheim Fellowship  during portions of this work, and thanks the Hausdorff Center for Mathematics for productive visits as a Bonn Research Chair. Yung is partially supported by a Future Fellowship FT200100399 from the Australian Research Council.

\bibliographystyle{alpha}
\bibliography{AnalysisBibliography}
\label{endofproposal}

\end{document}